\documentclass[11pt,a4paper]{article}

\usepackage{epsf,epsfig,amsfonts,amsgen,amsmath,amstext,amsbsy,amsopn,amsthm
}
\usepackage{amsmath,times,mathptmx}
\usepackage{enumitem}
\usepackage{amsfonts,amsthm,amssymb}
\usepackage{amsfonts}
\usepackage{graphics}
\usepackage{latexsym,bm}
\usepackage{amsfonts,amsthm,amssymb,bbding}
\usepackage{indentfirst}
\usepackage{graphicx}
\usepackage{color}
\usepackage[colorlinks=true,anchorcolor=blue,filecolor=blue,linkcolor=blue,urlcolor=blue,citecolor=blue]{hyperref}
\usepackage{float}
\usepackage{tikz}

\DeclareMathAlphabet{\mathcal}{OMS}{cmsy}{m}{n}
\DeclareSymbolFont{largesymbols}{OMX}{cmex}{m}{n}
\allowdisplaybreaks[4]
\usepackage{moresize}

\newtheorem{thm}{Theorem}

\newtheorem{lemma}{Lemma}
\newtheorem{false statement}{False statement}

\theoremstyle{definition}

\newtheorem{claim}{Claim}

\newtheorem{remark}
{Remark}

\newtheorem{case}{Case}
\newtheorem{casee}{Case}

\newtheorem{problem}{Problem}

\usepackage{needspace}

\hypersetup{pdftitle={Spectral extremal problems on 1-planar graphs without Friendship graph},
  pdfauthor={Jiamin Li, Dan Li},
  pdfsubject={Friendship-free 1-planar spectral extremal graphs}}
  
  \usetikzlibrary{
  	calc,
  	intersections,
  	positioning
  }

\begin{document}

\title{Spectral extremal problems on 1-planar graphs without Friendship graph
\footnote{Supported by Tianshan Talent Training Program (No. 2024TSYCCX0013),
	Natural Science Foundation of Xinjiang Uygur Autonomous Region (No. 2024D01C41),
	 The Basic scientific research in universities of Xinjiang
	Uygur Autonomous Region (XJEDU2025P001) and NSFC (No. 12361071).}}
\author{{Jiamin Li, Dan Li\thanks{Corresponding author. E-mail: ldxjedu@163.com.},  Yuanyuan Chen}\\
{\footnotesize College of Mathematics and Systems Science, Xinjiang University, Urumqi 830046, China}}
\date{}

\maketitle {\flushleft\large\bf Abstract:}
Let $\textit{spex}_{\mathcal{P}_1}(n,F)$ be the maximum spectral radius among all
$n$-vertex $F$-free $1$-planar graphs.  Define
$F_t$ as the friendship graph formed by $t$ triangles sharing
exactly one common vertex.
Tait and Tobin (2017)~\cite{Tait2017} used the fundamental structure of
spectral extremal graphs to determine the unique planar graph with maximum
spectral radius for sufficiently large order. Subsequently, Zhang, Wang and
Wang (2024)~\cite{Zhang2024} characterized the corresponding extremal graph
in the class of $1$-planar graphs.
In this paper, we focus on $F_t$-free $1$-planar graphs and establish a
structural theorem for their spectral extremal graphs for all $t\geq1$ and
sufficiently large $n$. More precisely, every extremal graph is connected
and contains a copy of $K_{2,n-2}$, and for $t\geq2$ the two distinguished
vertices are adjacent and the subgraph induced by the remaining vertices is a
bipartite graph. Based on this structure result together with the drawing properties of $K_{3,6}$,
we determine $\textit{spex}_{\mathcal{P}_1}(n,F_t)$ and characterize its
unique extremal graph.

\vspace{0.1cm}
\begin{flushleft}
    \textbf{Keywords:} Spectral radius; $1$-planar graph; Friendship graph; Matching number.
\end{flushleft}
\textbf{AMS Classification:} 05C50; 05C35

\section{Introduction}\label{sec:introduction}

All graphs considered in this paper are finite, simple and undirected.
For a graph $G$, let $A(G)$ be its adjacency matrix, $\rho(G)$ be
its spectral radius (i.e., the maximum modulus of eigenvalues of $A(G)$).
Given a graph family $\mathcal{F}$, a graph is said to be
$\mathcal{F}$-free if it does not contain any $F\in\mathcal{F}$ as a
subgraph. When $\mathcal{F}=\{F\}$, we write $F$-free instead of
$\mathcal{F}$-free. Let $\operatorname{spex}_{\mathcal{P}_1}(n,F)$ be the maximum spectral radius over all $F$-free $1$-planar graphs of order $n$, and $\operatorname{SPEX}_{\mathcal{P}_1}(n,F)$ be the family of $F$-free $1$-planar graphs of order $n$ with spectral radius equal to $\operatorname{spex}_{\mathcal{P}_1}(n,F)$.
For two vertex-disjoint graphs $G_1$ and $G_2$, let $G_1\cup G_2$ be the disjoint union of them, and $G_1\vee G_2$ denote their $join$ which is the graph obtained by adding all possible edges between $G_1$ and $G_2$. A Perron vector of $G$ is a nonzero nonnegative eigenvector of $A(G)$
corresponding to $\rho(G)$, we write $X=(x_v)_{v\in V(G)}^T$ and normalize
it by $\max_{v\in V(G)}x_v=1$. If $G$ is connected, then $X>0$ by the
Perron--Frobenius theorem.

As usual, $V(G)$ and $E(G)$ denote the vertex set and edge set of $G$.
For $U\subseteq V(G)$, $G[U]$ denotes the subgraph of $G$ induced by $U$,
and for $v\in V(G)$, $N_G(v)$ denotes the set of neighbors of $v$ in $G$.
Write $e(G)=|E(G)|$, $d_G(v)=|N_G(v)|$,
$\Delta(G)=\max_{v\in V(G)}d_G(v)$, $e(U)=|E(G[U])|$,
$d_U(v)=|N_G(v)\cap U|$ and $N_G(U)=\bigcup_{u\in U}N_G(u)$.
For two disjoint subsets $U$ and $V$ of $V(G)$, let \(E_G[U,V]\) be the set of edges with one end vertex in \(U\) and the other in \(V\), and \(G[U,V]\) be the bipartite graph with vertex set \(U\cup V\) and edge set \(E_G[U,V]\).  
A vertex subset $U$ is independent if $e(U)=0$. A pair $(A,D)$ is a
bipartition  of $V(G)$ if $V(G)=A\cup D, A\cap D=\varnothing$, and both $A$ and $D$ are independent set. 

A graph is called planar if it can be drawn in the plane without crossed edges. Research on the spectral radius of planar graphs goes back to Schwenk
and Wilson~\cite{Schwenk1978}.
Boots and Royle~\cite{Boots1991}, and independently Cao and
Vince~\cite{Cao1993}, proposed a conjecture on the spectral extremal
planar graph, which Tait and Tobin~\cite{Tait2017} confirmed for
sufficiently large $n$ by determining its fundamental structure $K_2\vee P_{n-2}$. A natural generalization reads:
given a graph $F$, what is
the maximum spectral radius of an $F$-free planar graph of order $n$?
This problem has been studied for various forbidden configurations,
such as $k$ edge-disjoint cycles~\cite{ZhaiLiu2024},
vertex-disjoint cycles $tC_l$~\cite{Fang2024},
wheels $W_k$, friendship graphs $F_t$ and
matchings $(t+1)K_2$~\cite{WangHuangLin2024},
intersecting cycles $B_{t,l}$~\cite{YinLi2026}
and $C_{k,l}$~\cite{LiLiYinChen2026},
linear forests $tP_l$~\cite{YinLiMeng2025}, where $B_{t,l}$ denotes the graph obtained by  $t$ edge-disjoint $l$-cycles sharing a common vertex,  and $C_{k,l}$  as a graph obtained from $C_k \cup C_l$ such that the two cycles share a common vertex for  $l \ge k \ge 3$.
These studies provide spectral extremal results for sufficiently
large $n$ under the respective parameter conditions.

Inspired by these results, we turn our attention to $1$-planar graphs.
A graph is $1$-planar if it has a drawing in the plane in which each edge
is crossed at most once. In 2024, Zhang, Wang and Wang~\cite{Zhang2024} proved that
$\rho(G)\leq5+\sqrt{2n+5}$ for every $n$-vertex
$1$-planar graph with $n\geq7$, and determined the unique spectral
extremal graph in $1$-planar graph when $n\geq6.23\times10^{18}$.
Xu and Chang~\cite{Xu2025} determined
$\operatorname{SPEX}_{\mathcal{P}_1}(n,K_r)$ for $r\in\{3,4\}$ and
restricted $\operatorname{SPEX}_{\mathcal{P}_1}(n,K_5)$ to an explicitly
described family for sufficiently large $n$.
More recently, Li, Wang and Zhao~\cite{LiWangZhao2026} determined
$\operatorname{SPEX}_{\mathcal{P}_1}(n,F)$ for
$F\in\{K_5,C_5,2C_5\}$ with sufficiently large $n$, and established
a structural theorem for connected spectral extremal $F$-free
$1$-planar graphs under suitable conditions on $F$. Naturally, we focus on the following problem.

\begin{problem}\label{prob:friendship}
	For $t\geq1$ and sufficiently large $n$, which graphs belong to
	$\operatorname{SPEX}_{\mathcal{P}_1}(n,F_t)$, and what is their
	fundamental structure?
\end{problem}

In this paper, we completely resolve Problem \ref{prob:friendship}. We first establish a structural theorem for graphs in
$\operatorname{SPEX}_{\mathcal{P}_1}(n,F_t)$.

\begin{thm}\label{thm1}
Let $t\geq1$ and $n\geq \max \{1.23 \times 10^{18}, 2t+1,  
 \frac{2t}{\alpha} ,  \frac{f(t)^2-f(t)+4}{2}, \frac{g(t)^2-g(t)+4}{2}, 7t-$ $12 \}$ be integers. Suppose
$G \in \operatorname{SPEX}_{\mathcal{P}_1}(n,F_t)$, let $X=(x_v)_{v\in V(G)}^T$ be its positive eigenvector corresponding to $\rho=\rho(G)$ with $\max_{v\in V(G)}x_v=1$. Then $G$ is connected and there exist two
distinct vertices $u',u''\in V(G)$ such that
$R=N_G(u')\cap N_G(u'')=V(G)\setminus\{u',u''\}$ and
$x_{u'}=x_{u''}=1$.
Moreover, the following
assertions hold.
\begin{enumerate}[label=(\roman*)]
\item If $t=1$, then $G\cong K_{2,n-2}$.
\item If $t\geq2$, then  $G\cong K_2\vee H$, where $H=G[R]$
satisfies $e(H)=6m(H)=6(t-2)$ and $H$ has a bipartition $(A,D)$ with $|A|=t-2$ ,
$d_H(a)=6$ for every $a\in A$.
\end{enumerate}
\end{thm}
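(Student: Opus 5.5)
We follow the standard Tait--Tobin scheme for spectral extremal problems on sparse graph classes, adapted to $1$-planar graphs as in Zhang, Wang and Wang~\cite{Zhang2024} and Li, Wang and Zhao~\cite{LiWangZhao2026}. The plan has three stages: a lower bound on $\rho$, structural consequences for the Perron vector, and extremal refinement of $H=G[R]$.

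\textbf{Lower bound and connectivity.} Since $K_{2,n-2}$ is $1$-planar (indeed planar) and $F_t$-free for every $t\ge1$ (it contains no triangle), we get $\rho\ge\sqrt{2(n-2)}$. Connectivity follows by the usual argument: otherwise join a component of maximum spectral radius to another component by an edge through a vertex of low degree. This preserves $1$-planarity and $F_t$-freeness (no new triangle is created, and a cut edge lies in no cycle), and strictly increases $\rho$. Hence $X>0$.

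\textbf{Two dominating vertices.} We use $e(G)\le 4n-8$ for $1$-planar graphs, and more generally $e(S,T)\le 2(|S|+|T|)$-type bounds for bipartite $1$-planar graphs. Combining these with $\rho x_v=\sum_{w\sim v}x_w$ and $\rho^2x_v=\sum_{w}\sum_{z\sim w}x_z$, we would:
\begin{itemize}
\item show that the set $L=\{v: x_v\ge \varepsilon\}$ has bounded size;
\item show that vertices of $L$ have degree $\Omega(n)$;
\item show that $L$ has exactly two vertices $u',u''$ with $x_{u'},x_{u''}\ge1-O(1/\sqrt n)$ and degree $n-O(1)$, since $K_{3,3}$-structures with large common neighbourhood are not $1$-planar (three vertices cannot share $7$ common neighbours).
\end{itemize}
Next we show that every $v\ne u',u''$ has $x_v=O(1/\sqrt n)$. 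Then, for any $v$ missing $u'$, we delete all edges at $v$ except those to $\{u',u''\}$ and add the missing edges to $u',u''$. Since $x_v$ is small and $d_R(v)$ is bounded, the Rayleigh quotient increases. We must check that the modified graph is still $1$-planar (route $v$ inside a face near the two hubs) and still $F_t$-free. The second check is the delicate point for $F_t$: adding $vu'$ can create triangles $u'vw$. We handle it by making $v$ isolated in $R$, so the only new triangle is $u'vu''$ when $u'u''\in E$. That triangle counts toward $F_t$ centred at $u'$, so we need $m(H)\le t-2$ to be preserved, which is fine. This gives $R=V\setminus\{u',u''\}$, and by symmetry $x_{u'}=x_{u''}=1$.

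\textbf{Characterising $H$.} Triangles through $u'$ are exactly $u'u''r$ (if $u'u''\in E$) and $u'ab$ with $ab\in E(H)$, and pairwise vertex-disjoint ones correspond to matchings. So $F_t$-freeness of $K_2\vee H$ (or $K_{2,n-2}+H$) means $m(H)\le t-2$ when $u'u''\in E$, and $m(H)\le t-1$ otherwise. Triangles centred in $R$ are controlled similarly. For $t=1$, $H$ must be empty, and $u'u''$ cannot be present, since it would create the triangle $F_1$; hence $G\cong K_{2,n-2}$.

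For $t\ge2$ we compare two options. Adding $u'u''$ gains about $2$ in $X^TAX$ (roughly $2/\rho$ in $\rho$), whereas one extra matching edge in $H$ gains at most $O(1/n)$ per edge times a bounded number of edges. So $u'u''\in E$, and then $m(H)\le t-2$. Since $\rho$ increases essentially with $e(H)$ to first order ($\rho\approx\frac12+\sqrt{2n+\dots}+\frac{c\,e(H)}{n}$), $H$ maximises $e(H)$ subject to $m(H)\le t-2$, $1$-planarity of $K_2\vee H$, and $n$ large. In $K_2\vee H$, a vertex $a$ of $H$ together with $u',u''$ forms three vertices with common neighbourhood $N_H(a)$, and the drawing properties of $K_{3,6}$ (which is $1$-planar while $K_{3,7}$ is not) force $d_H(a)\le6$. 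By Gallai--Edmonds or a König-type argument, $e(H)\le 6m(H)$, with equality only when $H$ is bipartite with a cover side $A$ of size $m(H)$, every $a\in A$ having degree exactly $6$; large $n$ excludes the small factor-critical components. Finally, we verify that this $H$ is realizable and that $m(H)=t-2$ exactly.

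The \textbf{main obstacle} is the step $e(H)\le6m(H)$ with its equality case. This requires showing that each $K_{3,6}$ in a $1$-planar drawing of $K_2\vee H$ blocks other structures, and that odd components (triangles, $K_5^-$-like pieces) give a ratio below $6$. The thresholds $f(t),g(t)$ are presumably used precisely here, and in the second-order eigenvector comparison that distinguishes graphs with equal $e(H)$.
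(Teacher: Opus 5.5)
Your outline follows the paper's route: find hubs $u',u''$ by Tait--Tobin estimates, reroute the vertices of $B$, force $u'u''$ for $t\ge2$, bound $e(H)\le 6m(H)$ by Gallai--Edmonds, then compare with $K_2\vee((t-2)S_6\cup(n-7t+12)K_1)$. However, the step you single out as the main obstacle is not resolved by the tools you name. Gallai--Edmonds plus $\Delta(H)\le 6$ does not give $e(H)\le 6m(H)$. A factor-critical component on $2r+1$ vertices could a priori have $6r+3$ edges (e.g.\ $K_7$), and ``large $n$'' excludes no small components.

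The missing ingredient is the density bound $e(H[U])\le 2|U|-\varepsilon$ for every nonempty $U\subseteq R$, where $\varepsilon=1$ iff $u'u''\in E$. It comes from applying $e\le 4n-8$ to $G[U\cup\{u',u''\}]$, i.e.\ $e(H[U])+2|U|+\varepsilon\le 4|U|$. This gives $e(D_i)\le 4r_i+2\le 6r_i$ and $e(H[C])\le 2|C|<3|C|$, while $|E_A|\le 6|A|$ comes from $\Delta(H)\le 6$. When $u'u''\in E$ it sharpens to $e(D_i)\le 4r_i+1<6r_i$. So equality forces $C=\varnothing$ and all $r_i=0$, and then $A$ is independent with every degree equal to $6$.

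No $K_{3,6}$ drawing analysis and no second-order spectral comparison is needed for this theorem; those enter only for uniqueness in Theorem~\ref{thm2}. The thresholds $f(t),g(t)$ just make two first-order comparisons positive: deleting $E(H)$ and adding $u'u''$, and replacing $H$ by $(t-2)S_6\cup(n-7t+12)K_1$ using $2/\rho\le x_v\le 2/(\rho-6)$.

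The rerouting of $B$ also has a real hole, because ``route $v$ inside a face near the two hubs'' can fail. If no edge $v_1u'$ crosses an edge $v_2u''$, the hub edges form a plane $K_{2,|R|}$. Each quadrilateral $u'r_iu''r_{i+1}$ may then be traversed by edges among $r_{i-1}r_{i+1},r_ir_{i+1},r_ir_{i+2}$, so there may be no face incident to both hubs. The paper picks a quadrilateral whose four $r$'s all have $x_r\le 80/\rho$; one exists because fewer than $n/10$ vertices exceed this. It deletes those at most three edges, losing at most $38400/\rho^2$ against a gain of order $1/\rho$.

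That gain requires $\sum_{w\in N(v)}x_w<x_{u'}+x_{u''}$ with a definite margin. Since $d_B(v)$ is unbounded, you must choose $v$ via the $7$-degeneracy of $G[B]$ and use the constant bound $x_w\le 153/2000$ on $R\cup B$. Your claimed bound $x_v=O(1/\sqrt n)$ would need its own proof while $|B|$ may still be linear in $n$, and even then a large $d_B(v)$ could spoil the sum.

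Lastly, $F_t$-freeness after rerouting cannot be justified by ``$m(H)\le t-2$'', which is not known at that stage. Instead, an $F_t$ using the new triangle $u'vu''$ is centred at a hub, and $v$ can be swapped for an unused $r\in R$ since $|R|\ge\alpha n\ge 2t$.
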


\vspace*{2mm}
Combining this structural theorem with matching theory as well as the drawing properties
of $K_{3,6}$ derived from Kleitman's parity result~\cite{Kleitman1976},
we determine $\textit{spex}_{\mathcal{P}_1}(n,F_t)$ and characterize its
unique extremal graph for every $t\geq1$ and sufficiently large $n$.
This extends the study of $F_t$-free planar
graphs~\cite{WangHuangLin2024} to the $1$-planar setting.

\begin{thm}\label{thm2}
Let $t\geq1$ and $n\geq \max \{1.23 \times 10^{18}, 2t+1,  
\frac{2t}{\alpha},  \frac{f(t)^2-f(t)+4}{2}, \frac{g(t)^2-g(t)+4}{2}, 7t-$ $12 \}$ be integers, and  $G\in$
$\operatorname{SPEX}_{\mathcal{P}_1}(n,F_t)$. Then
\[
G \cong
\begin{cases}
K_{2,n-2},&\text{if }t=1,\\[3pt]
K_2\vee \bigl((t-2)S_6\cup(n-7t+12)K_1\bigr),&\text{if }t\geq2.
\end{cases}
\]
\end{thm}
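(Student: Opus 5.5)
Theorem~\ref{thm2} follows from Theorem~\ref{thm1} once we pin down $H$ exactly. For $t=1$ there is nothing left to prove, since Theorem~\ref{thm1}(i) already gives $G\cong K_{2,n-2}$. For $t\ge 2$, Theorem~\ref{thm1}(ii) gives $G\cong K_2\vee H$ with $e(H)=6m(H)=6(t-2)$, and $H$ has a bipartition $(A,D)$ in which $|A|=t-2$ and every $a\in A$ has $d_H(a)=6$. Since $e(H)=6|A|$ and every edge of $H$ meets $A$, each edge of $H$ joins $A$ to $D$ and is counted exactly once from $A$. It remains to show that the neighbourhoods $N_H(a)$, $a\in A$, are pairwise disjoint. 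Then $H$ is a disjoint union of $t-2$ stars $S_6$ (each $K_{1,6}$) together with $n-2-7(t-2)=n-7t+12$ isolated vertices, which is the claimed graph; the hypothesis $n\ge 7t-12$ ensures this count is nonnegative.

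\emph{Disjointness via $K_{3,6}$.} Suppose two distinct vertices $a,a'\in A$ share a neighbour $d\in D$. Take $u'$, $a$ and $d$: each of $a$ and $d$ is adjacent to both $u'$ and $u''$. Consider the three vertices $u',u'',a$. Their common neighbourhood contains $N_H(a)$, which has six vertices, all adjacent to $u'$ and $u''$. So $G$ contains $K_{3,6}$ with parts $\{u',u'',a\}$ and $N_H(a)$. The drawing properties of $K_{3,6}$, derived from Kleitman's parity result, constrain how such a copy sits in a $1$-planar drawing. The plan is to use the lemma on $K_{3,6}$ presumably established earlier in the paper: in any $1$-planar drawing of $K_{3,6}$, every edge of it is crossed, or every vertex of the 6-side is "saturated". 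From this I expect to get that no further vertex outside the copy can be adjacent to $u',u''$ together with a vertex of the 6-side in a way compatible with a second copy of $K_{3,6}$, namely $\{u',u'',a'\}$ with $N_H(a')$, sharing $d$ with the first. Concretely, the shared vertex $d$ would have to lie in both copies, and with the edges $da$, $da'$, $du'$, $du''$ all forced to be crossed, the number of available crossings is exceeded. If this crossing count is too delicate, the fallback is a spectral argument. One alternative $H'$ (same number of edges and matching number) is obtained by moving an edge $a'd$ to $a'd^*$ with $d^*$ isolated. A second alternative replaces the whole configuration by disjoint stars. By Theorem~\ref{thm1} the extremal $H$ must maximise $\rho(K_2\vee H)$ among admissible $H$. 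Comparing characteristic polynomials of $K_2\vee H$ via the walk-count expansion $\rho\approx\sqrt{2n}$, the first-order term depends on $e(H)$, and the next-order term depends on $\sum_v d_H(v)^2$. Disjoint stars maximise $\sum_{d\in D} d_H(d)^2$ only when degrees in $D$ are large, so this comparison goes the wrong way. That is why the $1$-planarity obstruction, not spectral optimality, must be what forces disjointness.

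\emph{Main obstacle.} The main obstacle is the $K_{3,6}$ drawing argument, i.e.\ showing rigorously that two such $K_{3,6}$ copies sharing $u',u''$ and a $6$-side vertex cannot coexist in a $1$-planar drawing. I would first verify that $K_2\vee((t-2)S_6\cup(n-7t+12)K_1)$ is $1$-planar and $F_t$-free, so that it is at least a valid candidate. For $1$-planarity, draw $K_{2,n-2}$ planarly with $u'u''$ crossing one edge, and then place each star in the region around its $7$ vertices. For $F_t$-freeness, note that the matching number of $H$ is $t-2$ and that the triangles through $u'$ number at most $m(H)+1=t-1$ disjoint ones. Conversely, any $F_t$-free structure forces $m(H)\le t-2$, and by König this allows a vertex cover $A$. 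Then I would argue by the parity of crossings (Kleitman) that in a $1$-planar drawing of two $K_{3,6}$'s sharing four vertices, some edge is crossed twice. This gives disjointness and completes the proof.
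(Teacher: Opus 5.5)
\paragraph{Verdict: genuine gap at the key step.} Your reduction is correct. Since $(A,D)$ is a bipartition and $e(H)=6(t-2)=\sum_{a\in A}d_H(a)$, it suffices to show that the sets $N_H(a)$, $a\in A$, are pairwise disjoint. You are also right that this must come from $1$-planarity and not from a spectral comparison. But this disjointness is the whole content of the proof beyond Theorem~\ref{thm1}, and your proposal does not establish it.

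The property you guess for $K_{3,6}$ is false. By Lemma~\ref{lm:triples}, a good $1$-planar drawing of $K_{3,6}$ has exactly six crossings. That lemma combines Kleitman's parity on each $K_{3,3}$ (formed by $U$ and three vertices of $W$) with the bound of at most five crossings in a $1$-planar drawing of $K_{3,4}$. Moreover, $W$ splits into two triples, with one crossing between the edges at any two vertices of the same triple and none across. So each vertex of $W$ has exactly two of its three edges crossed. Nothing forces $da$, $da'$, $du'$, $du''$ all to be crossed. Counting at $d$ alone gives no contradiction: $du'$ and $du''$ could each be crossed once by an edge $u''w$ or $u'w'$ with $w,w'\in N_H(a)\cap N_H(a')$, and such an edge lies in both copies. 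Your concluding claim that some edge is crossed twice is asserted, not derived.

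\paragraph{What is missing.} The paper closes this step with a global face-counting argument (Lemma~\ref{lm:separation}).
\begin{itemize}
\item Take $Q$ to be the $K_{3,6}$ with parts $Z=\{u',u'',a\}$ and $W=N_H(a)$, and let $b=a'$. Then $b\notin V(Q)$ because $N_H(a)\subseteq D$, and $b$ is adjacent to $u'$, $u''$ and $d\in W$.
\item Planarize only the twelve crossed edges of $Q$ together with $bu'$, $bu''$, $bd$. These three edges are uncrossed in this subdrawing, because every crossed edge of $Q$ has already used its single crossing.
\item The result is a simple bipartite plane graph with $16$ vertices and $27$ edges. The hexagon formed by each triple and its three crossing points bounds an empty face, since $Z$, the other triple and $b$ are all connected to one another without meeting it.
\item Euler's formula gives at least $13$ faces, so $54=2\cdot 27\ge 6+6+4\cdot 11=56$, a contradiction.
\end{itemize}
The second copy of $K_{3,6}$ on $\{u',u'',a'\}$ plays no role; the three edges at $a'$ are enough. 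Applying the same argument to every $b\in V(H)\setminus(\{a\}\cup N_H(a))$ shows directly that the component of $a$ is exactly its star (Lemma~\ref{lm:starcomponent}).
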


For $t=1$, Xu and Chang~\cite{Xu2025} proved that the extremal $K_3$-free graph is $K_{2,n-2}$ when $n$ is sufficiently large, where the threshold is at least $6.23\times 10^{18}$. In this paper, we reduce this bound to $1.23\times 10^{18}$. For $t=2$, the extremal graph is
$K_2\vee (n-2)K_1$. For $t\geq3$, the remaining subgraph consists of $t-2$
vertex-disjoint copies of $S_6$ together with isolated vertices. This differs
from the planar extremal graph, whose remaining graph consists of one
path $P_{2t-3}$ and isolated vertices~\cite{WangHuangLin2024}.

\vspace*{2mm}
An outline of the paper is as follows.
In Section~\ref{sec:thm1}, we present several useful lemmas and prove the
Theorem~\ref{thm1}, which establishes the fundamental structure of
spectral extremal $F_t$-free $1$-planar graphs.
Section~\ref{sec:thm2} studies the drawing properties of $K_{3,6}$
and combine them with Theorem~\ref{thm1} to prove Theorem~\ref{thm2},
thereby characterizing the unique extremal graph attaining
$\textit{spex}_{\mathcal{P}_1}(n,F_t)$ for sufficiently large $n$.

\vspace*{2mm}

\section{Proof of Theorem \ref{thm1}}\label{sec:thm1}

The structural results established in Theorem~\ref{thm1} serve as the key building block for deriving Theorem~\ref{thm2}.
Before proceeding, we first list several useful lemmas. The first four provide the edge number and spectral estimates used below.

\begin{lemma}\cite{Fabrici2007}.\label{lem:edge}
If $G$ is a $1$-planar graph of order $n\geq3$, then $e(G)\leq4n-8$.
\end{lemma}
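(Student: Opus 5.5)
The statement to prove is Lemma~\ref{lem:edge}: every $1$-planar graph $G$ of order $n\geq 3$ satisfies $e(G)\leq 4n-8$. The plan is the classical one: pass to a $1$-planar drawing with the fewest crossings, remove one edge from each crossing pair to obtain a planar graph, and bound the number of removed edges by counting faces of that planar graph.

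\textbf{Setup.} We may assume $G$ is edge-maximal among $1$-planar graphs on its vertex set, since adding edges only strengthens the claim. Fix a $1$-planar drawing $D$ of $G$ with the minimum number of crossings. By this minimality:
\begin{itemize}
\item no two edges sharing an endpoint cross, since swapping the two segments after the crossing point removes the crossing;
\item each edge is crossed at most once.
\end{itemize}
Hence every crossing involves two independent edges $ac$ and $bd$. Let $k$ be the number of crossings. Deleting one edge from each crossing pair yields a plane graph $G_0$ with $e(G_0)=e(G)-k$.

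\textbf{Bounding $k$ via faces.} For each crossing of $ac$ and $bd$, the four endpoints $a,b,c,d$ can be joined by curves running close to the crossing edges. By maximality of $G$, we may assume the edges $ab,bc,cd,da$ are present and drawn uncrossed alongside the crossing, as in the standard redrawing argument. Then in $G_0$, with $bd$ deleted, the kept edge $ac$ together with the kite edges bounds two triangular faces, $abc$ and $acd$. The deleted edge $bd$ lies inside their union.

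This assigns to each crossing two faces of $G_0$. Distinct crossings receive disjoint pairs of faces, because each such face contains the single deleted edge passing through it. Therefore $2k\leq f(G_0)$.

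\textbf{Counting.} We may assume $G_0$ is connected, since otherwise we add edges, which is allowed by maximality. Euler's formula gives
\[
f(G_0)=e(G_0)-n+2.
\]
Combining with $2k\leq f(G_0)$ and $k=e(G)-e(G_0)$,
\[
e(G)=e(G_0)+k\leq e(G_0)+\tfrac12\bigl(e(G_0)-n+2\bigr)=\tfrac32 e(G_0)-\tfrac{n-2}{2}.
\]
Planarity gives $e(G_0)\leq 3n-6$, so
\[
e(G)\leq \tfrac92(n-2)-\tfrac12(n-2)=4n-8.
\]
For $n=3$, $4n-8=4\geq 3$ holds trivially, so the bound holds for all $n\geq3$.

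\textbf{Main obstacle.} The delicate step is the claim that the kite edges $ab,bc,cd,da$ can be assumed present and uncrossed. In particular, one must rule out that such an edge already exists in $G$ but is drawn elsewhere with a crossing. I would handle this by rerouting that edge along the crossing, which does not increase the number of crossings. Since this is the cited result of~\cite{Fabrici2007}, the paper itself would simply invoke it.
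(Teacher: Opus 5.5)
The paper gives no proof of this lemma; it is quoted from Fabrici and Madaras, so your argument has to stand on its own. Your set-up (edge-maximal $G$, crossing-minimal drawing, independent crossing pairs, $e(G_0)=e(G)-k$) and your final arithmetic are fine. But the step that carries the count has a genuine gap, and your rerouting fix does not close it.

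\textbf{What is and is not justified.} For every crossing $ac\times bd$, maximality does give $ab,bc,cd,da\in E(G)$, and crossing-minimality gives that these edges are uncrossed: redrawing a crossed one alongside the crossing would save a crossing. What you actually use is stronger. You need all kite edges to be drawn \emph{alongside} their crossings simultaneously, so that each crossing owns two private triangular faces $abc$, $acd$ of $G_0$.

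\textbf{Why this fails.} Since $G$ is simple there is only one edge $ab$, while several crossings $ac_i\times bd_i$, at points $p_i$, may involve both $a$ and $b$. The paths $a\,p_i\,b$ are internally disjoint, because each of their pieces is crossed only at its own $p_i$. They therefore cut the plane into regions, each bounded by two of the paths. The edge $ab$ crosses none of these paths, so it lies in one region and can run alongside at most two of these crossings. If there are three or more, no rerouting makes all of them kites, and moving $ab$ to serve one crossing un-serves another.

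Concretely, suppose $p_3,p_4$ bound an empty lens $a\,p_3\,b\,p_4\,a$ into which both $ab$-quadrants open, while $ab$ is drawn elsewhere. Then whichever edge you delete at $p_3$ and at $p_4$, the face of $G_0$ containing the lens contains halves of both deleted edges. So ``distinct crossings receive disjoint pairs of faces'' fails in general, and $2k\le f(G_0)$ is not established.

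\textbf{How to repair it.} What is missing is a per-face count that does not presuppose kites. The $2k$ halves of deleted edges are pairwise non-crossing arcs. Each lies inside a face $F$ of $G_0$ and joins a boundary vertex $b$ to an interior point $p$ of a boundary edge $ac$, with $b\notin\{a,c\}$. Each side of a boundary edge receives at most one such arc.

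Cut $F$ along these arcs. The resulting regions have a tree as adjacency graph. A leaf region is bounded by one arc $bp$ and a stretch of the boundary walk from $b$ to $p$. Because $b\neq a,c$, that stretch contains a whole side, and no arc ends on it. If $F$ contains at least one arc, there are at least two leaves, so a face whose boundary walk has length $\ell(F)$ contains at most $\ell(F)-2$ arcs; this holds trivially when $F$ contains no arc, since $\ell(F)\ge 3$.

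Sum over faces and apply Euler's formula to the connected plane graph $G_0$. This gives $2k\le 2e(G_0)-2f(G_0)=2n-4$, hence $e(G)=e(G_0)+k\le (3n-6)+(n-2)=4n-8$, with no need for the kite-drawing claim.

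Connectivity of $G_0$ does follow from your set-up. $G$ is connected by maximality, and the present, uncrossed kite edges $ab,ad$ keep $b$ and $d$ joined after $bd$ is deleted. Your stated justification (``add edges, allowed by maximality'') does not work as written, because edges added to $G_0$ need not belong to $G$.
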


A graph is $k$-degenerate if every nonempty subgraph has a vertex of degree
at most $k$.
\begin{lemma}\cite{Fabrici2007}.\label{lm2}
Every $1$-planar graph is $7$-degenerate.
\end{lemma}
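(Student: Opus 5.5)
The plan is to derive Lemma~\ref{lm2} directly from the edge bound in Lemma~\ref{lem:edge}. We combine it with the fact that $1$-planarity is inherited by subgraphs, and then use an averaging argument on degrees.

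First, let $G$ be a $1$-planar graph and $H$ any nonempty subgraph of $G$. Fix a $1$-planar drawing of $G$ and delete from it the vertices and edges not in $H$. Deleting edges cannot create new crossings, so every remaining edge is still crossed at most once. Hence the restricted drawing shows that $H$ is $1$-planar. It therefore suffices to show that every nonempty $1$-planar graph $H$ has a vertex of degree at most $7$.

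Next, let $h=|V(H)|$. If $h\le 2$, every vertex has degree at most $1\le 7$, and we are done. If $h\ge 3$, Lemma~\ref{lem:edge} gives $e(H)\le 4h-8$. By the handshake lemma,
\[
\sum_{v\in V(H)} d_H(v)=2e(H)\le 8h-16<8h.
\]
So the average degree of $H$ is strictly less than $8$. Consequently the minimum degree satisfies $\delta(H)\le 7$, since degrees are integers.

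Since $H$ was an arbitrary nonempty subgraph, $G$ is $7$-degenerate. No step here is a real obstacle. The only point needing care is the heredity of $1$-planarity under taking subgraphs, together with separating out the small cases $h\le 2$, to which Lemma~\ref{lem:edge} does not apply.
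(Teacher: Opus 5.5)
Your proof is correct: $1$-planarity passes to subgraphs, and for $h\ge 3$ Lemma~\ref{lem:edge} gives average degree $2e(H)/h\le 8-16/h<8$, so every nonempty subgraph has a vertex of degree at most $7$; the cases $h\le 2$ are trivial. The paper gives no proof of its own and simply cites Fabrici and Madaras, where $7$-degeneracy is the standard consequence of the same $4n-8$ edge bound, exactly as you argue.
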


\begin{lemma}\cite{Huang2021}.\label{lem:bipartite}
Let $G$ be a bipartite $1$-planar graph with part sizes $s$ and $t$,
where $2\leq s\leq t$. Then $e(G)\leq2(s+t)+4s-12$.
\end{lemma}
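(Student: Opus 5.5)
We would prove this by planarizing a fixed $1$-planar drawing and charging crossings to the smaller part. The key point is that every crossing can be encoded as an edge of a plane multigraph living only on the smaller part. Let the parts be $S$ and $T$ with $|S|=s\le t=|T|$, and put $n=s+t$.

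\emph{Setup.} Fix a $1$-planar drawing of $G$ with the minimum number of crossings. By the usual redrawing (swap) argument:
\begin{itemize}
\item no two adjacent edges cross;
\item no edge crosses itself;
\item each crossing involves exactly two edges.
\end{itemize}

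\emph{Base case $s=2$.} Here $e(G)\le |S|\,|T|=2t=2(s+t)+4s-12$. So we may assume $s\ge 3$.

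\emph{Reduction to counting crossings.} Let $c$ be the number of crossings. Deleting one edge from each crossing pair leaves a plane bipartite graph $G_0$ on $n$ vertices. By Euler's formula (every face has length at least $4$), $e(G_0)\le 2n-4$, hence
\[
e(G)\le 2n-4+c .
\]
It therefore suffices to show $c\le 4s-8$.

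\emph{Encoding crossings on $S$.} Every edge of $G$ has exactly one endpoint in $S$. Consider a crossing of $a_1b_1$ with $a_2b_2$, where $a_i\in S$ and $b_i\in T$. Since adjacent edges do not cross, $a_1\ne a_2$. Draw a curve from $a_1$ along $a_1b_1$ to the crossing point, and then along $a_2b_2$ back to $a_2$. Each edge is crossed at most once, so each half-edge is used by at most one such curve. After a slight perturbation, these curves form a plane loopless multigraph $H$ on vertex set $S$ with exactly $c$ edges.

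\emph{Bounding $c=e(H)$.} The aim is $e(H)\le 4s-8$. A simple plane graph on $s$ vertices already gives $3s-6$, and the slack of $s-2$ is what will absorb parallel edges.

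To control parallel edges, consider two parallel edges of $H$ joining $a_1$ and $a_2$ that bound a lens $L$ containing no vertex of $S$. If $L$ also contains no vertex of $T$, we argue that the two crossings could be removed by rerouting edges inside $L$, contradicting minimality of the crossing number. Hence every empty digon (a $2$-face of $H$) must contain a vertex of $T$, namely one of the $b_i$'s.

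This suggests adding the relevant vertices of $T$ to $H$ to obtain a plane graph without $2$-faces, and then doing a combined Euler count. An alternative is a discharging or face-length count on the plane graph $G_0\cup H$: in that graph, faces incident to crossing curves are triangles of type $S$--$S$--$T$, and the count should yield $c\le 4s-8$ directly.

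A cleaner route that we would try first is to run Euler's formula on the planarization of the whole drawing. Let $G^\times$ be obtained by turning each crossing into a degree-$4$ vertex. Then $G^\times$ has $n+c$ vertices and $e+2c$ edges. Count faces by length, using that a face can have length $3$ only if it contains a crossing vertex and that bipartiteness forces many faces to have length at least $4$. Combining this with the $S$-side bound $e(H)\le 3s-6+(\#\text{digons})$ and the argument that digons correspond to distinct $T$-vertices lying inside them should give the result.

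\emph{Main obstacle.} The hardest step is controlling the multiplicity of parallel edges in $H$, that is, showing that the number of $2$-faces of $H$ is at most $s-2$, or otherwise bounding it. This requires a careful redrawing argument showing that in a crossing-minimal drawing, lenses between the same two $S$-vertices cannot stack up without each forcing a private face structure that is charged back to Euler's formula. If that fails, we would instead first make $G$ edge-maximal 1-planar bipartite, so that all crossing quadrilaterals $a_1b_2a_2b_1$ are fully present, and then use the kite structure: each crossing yields a $4$-cycle, which makes the count on $H$ rigid.
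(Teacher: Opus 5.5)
\textbf{There is a genuine gap at your reduction step.} The inequality you set out to prove, $c\le 4s-8$, is false, so no lens or digon analysis can complete the plan.

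Take $G=K_{3,6}$, so $s=3$ and $t=6$. It is $1$-planar, being a subgraph of $K_2\vee S_6$ (Remark~\ref{rk222}). It is also an equality case of the lemma: $e(G)=18=2\cdot 9+4\cdot 3-12$.

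Every drawing of $K_{3,6}$ has at least $6$ crossings, since its crossing number is $6$. Lemma~\ref{lm:triples} shows that a good $1$-planar drawing has exactly six. But $4s-8=4$. Equivalently, your multigraph $H$ has $6$ edges on $3$ vertices, with every pair joined twice. So both $e(H)\le 4s-8$ and ``at most $s-2$ digons'' fail.

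The loss is in $e(G)=e(G_0)+c\le 2n-4+c$. You bound the plane skeleton $G_0$ and the crossings separately, but many crossings force $G_0$ to be far from a quadrangulation. For $K_{3,6}$, $e(G_0)=12=2n-6$, not $2n-4$.

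What is needed is a count in which $e(G_0)$ and $c$ are traded off against each other. Your ``cleaner route'' gestures at this. For a good $1$-planar drawing of a connected $G$, Euler's formula on the planarization gives exactly
\[
2e(G)=4n-8+f_3-\sum_{|f|\ge 5}(|f|-4).
\]
Every triangular face has the form $a\,b\,p$, where $ab$ is an uncrossed edge and $p$ is the crossing point of an edge at $a$ with an edge at $b$. Hence each triangular face contains exactly one vertex of $S$, and $f_3\le 2c$.

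The lemma is therefore equivalent to
\[
f_3-\sum_{|f|\ge 5}(|f|-4)\le 8s-16.
\]
The real work is showing that triangular faces are paid for by long faces, with a bound depending only on the smaller side. Your planarization paragraph and the kite fallback name this framework but never supply this step, which is exactly where $s\le t$ must enter.

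The paper does not prove this lemma itself; it imports it from \cite{Huang2021}.
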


\begin{lemma}\cite{Zhang2024}.\label{lem:spectral}
If $G$ is a $1$-planar graph of order $n\geq7$, then
$\rho(G)\leq5+\sqrt{2n+5}.$
\end{lemma}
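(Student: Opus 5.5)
The plan is to bound $\rho^2$ through the eigen-equation at a vertex of maximum Perron weight and reduce the claim to a quadratic inequality
\[
\rho^2-10\rho\le 2n-20,
\]
which rearranges to $(\rho-5)^2\le 2n+5$, i.e.\ $\rho\le5+\sqrt{2n+5}$. We may assume $G$ is connected, since otherwise we pass to the component attaining $\rho$ and use monotonicity of $5+\sqrt{2m+5}$ in $m$; components with fewer than $7$ vertices are handled directly, as their $\rho$ is at most $6$. Let $X$ be the Perron vector with $x_v=1=\max_w x_w$, write $N=N_G(v)$, $d=|N|$, and $\overline N=V(G)\setminus(N\cup\{v\})$. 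Then
\[
\rho^2=\rho^2x_v=\sum_{u\in N}\sum_{w\in N_G(u)}x_w
= d+\sum_{u\in N}\sum_{w\in N_G(u)\cap N}x_w+\sum_{u\in N}\sum_{w\in N_G(u)\cap \overline N}x_w ,
\]
and the three terms will be estimated separately.

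\emph{Outer term.} Since $x_w\le1$, this term is at most $e(N,\overline N)$, the number of edges of the bipartite $1$-planar graph $G[N,\overline N]$. By Lemma~\ref{lem:bipartite} (with the obvious direct bounds when one side has at most one vertex), this is at most
\[
2(d+|\overline N|)+4\min\{d,|\overline N|\}-12=2(n-1)+4\min\{d,|\overline N|\}-12 .
\]
The part $2n-14$ is exactly the $2n$-order term we need. The surplus $4\min\{d,|\overline N|\}$ must be absorbed, together with the term $d$, into a multiple of $\rho$. I will use $\rho=\sum_{u\in N}x_u$ together with a lower bound on the weights $x_u$ for $u\in N$: the latter comes from $\rho x_u\ge x_v=1$, and more usefully from bounding the excess edges by degrees in $G[N]$.

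\emph{Inner term.} This term equals $\sum_{uw\in E(G[N])}(x_u+x_w)=\sum_{w\in N}d_N(w)\,x_w$. Because every subgraph of a $1$-planar graph $H$ has at most $4|V(H)|-8$ edges (Lemma~\ref{lem:edge}), Hakimi's orientation theorem gives an orientation of $G[N]$ with all out-degrees at most $4$. Charging each edge to its tail yields a contribution of at most $4\sum_{u\in N}x_u=4\rho$ from the tail weights. The head weights are the delicate part. I will try to control them by a second application of the eigen-equation: for each $w\in N$ we have $\sum_{u\in N_G(w)} x_u=\rho x_w$, so $x_w\,d_N(w)$ can be traded against the weights of $w$'s neighbours. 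Symmetrizing between tails and heads should give the total inner term at most $8\rho$ minus a correction that also absorbs the $4\min\{d,|\overline N|\}$ and $d$ contributions from the previous step. Summing the three estimates should then give $\rho^2\le 10\rho+2n-20$, which rearranges to the claimed bound.

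The main obstacle is precisely this bookkeeping: making the linear-in-$\rho$ coefficient come out as exactly $10$, with constant $-20$, rather than something larger. The crude bounds above give the coefficient $8$ from $G[N]$ plus extra terms of size $d$ and $4\min\{d,|\overline N|\}$, which are not obviously $O(\rho)$. If the direct symmetrization fails, my fallback is an edge decomposition $G=G_1\cup G_2$. Here $G_1$ consists of the edges between the (at most boundedly many) high-degree vertices and the rest, and has spectral radius about $\sqrt{2n}$ since it is close to $K_{2,n-2}$ by Lemma~\ref{lem:bipartite}. The remainder $G_2$ is a $1$-planar graph with bounded maximum degree in the relevant sense, and its spectral radius is at most $5$ via the $4$-orientation. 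Then $\rho(G)\le\rho(G_1)+\rho(G_2)$, and the constants are fine-tuned to reach $5+\sqrt{2n+5}$. The hypothesis $n\ge7$ is where the small-case constants, and the $-12$ in Lemma~\ref{lem:bipartite}, become valid.
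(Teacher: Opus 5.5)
The paper gives no proof of this lemma; it quotes it from \cite{Zhang2024}. Your sketch therefore has to stand on its own, and it does not. The decisive steps are only announced (``should give'', ``fallback''), and the bookkeeping you set up cannot work. The split $\rho^2=d+(\text{inner})+(\text{outer})$ is fine. But you let Lemma~\ref{lem:bipartite} on $G[N,\overline N]$ supply the whole $2n$, and plan to absorb $d$, the inner term and the surplus $4\min\{d,|\overline N|\}$ into a multiple of $\rho$. Since $\rho\le\sqrt{2e(G)}<\sqrt{8n}$ a priori, nothing of order $n$ can be absorbed that way, and both $d$ and the inner term can be of order $n$:
\begin{itemize}
\item In $K_{2,n-2}$ with $v=u'$, you have $d=n-2$, inner term $0$, and outer term $(n-2)x_{u''}=n-2$.
\item In $K_2\vee(n-2)K_1$ with $v=u'$ (maximum weight by symmetry), you have $\overline N=\varnothing$, $d=n-1$, $x_{u''}=1$, $x_r=2/\rho$ for the remaining vertices $r$, and $\rho^2-\rho=2n-4$. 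So the inner term equals $(n-2)+2(n-2)/\rho=n-3+\rho$.
\end{itemize}
The second example is exactly where your orientation step fails. $G[N]$ is a star centred at $u''$, so every orientation with out-degrees at most $4$ gives $u''$ in-degree at least $n-6$. The head weights alone then contribute at least $n-6$, and no symmetrization can bring the inner term down to $8\rho$. In both graphs the first $n$ of the $2n$ is $d$ itself; the second sits in the outer term in one case and in the inner term in the other. So $d$, the inner term and the outer term must be controlled jointly, for instance starting from $\rho^2-10\rho=d+\sum_{w\in N}(d_N(w)-10)x_w+\sum_{w\in\overline N}d_N(w)x_w$.

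That joint estimate needs two ingredients your plan lacks.

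\emph{The outer sum cannot be bounded with $x_w\le1$.} This holds even if you apply Lemma~\ref{lem:bipartite} to the correct bipartite graph $(\{v\}\cup\overline N,N)$, so that $d$ is included: the excess of $d+e(N,\overline N)$ over $2n$ can genuinely be linear in $n$. For example, take $v$ joined to $u_1,\dots,u_k$ with $k$ even. Outside the wheel, put $w_i$ joined to $u_i,u_{i+1},u_{i+2}$, with $w_iu_{i+2}$ crossing $w_{i+1}u_{i+1}$. Inside, put $y_i$ joined to $u_i,u_{i+1}$ and, for even $i$, also to $u_{i+2}$ across the spoke $vu_{i+1}$. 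This graph is $1$-planar with $n=3k+1$ and $v$ of maximum weight, and $d+e(N,\overline N)=13k/2$, whereas $\rho^2=k+O(1)$. So you must use the actual weights on $\overline N$, via a second application of the eigen-equation there.

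\emph{The negative terms $(d_N(w)-10)x_w$ cannot be discarded} if you want the constant $-20$. In $K_2\vee(n-2)K_1$, discarding them leaves $d+(d_N(u'')-10)=2n-13$.

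Your fallback does not rescue the argument either. A bounded out-degree orientation gives no bound on the spectral radius without a degree bound: $K_{1,m}$ has an orientation with out-degree $1$ and $\rho=\sqrt m$. Removing boundedly many hubs does not bound the remaining degrees. The bound $\rho(G_1)\le\sqrt{e(G_1)}$ grows with the number of hubs. Finally, $\rho(G)\le\rho(G_1)+\rho(G_2)$ is far too lossy to give exactly $5+\sqrt{2n+5}$ for every $n\ge7$.
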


\begin{lemma}\cite{Xu2025}.\label{lem:rayleigh}
Let $G$ and $G'$ be graphs on the same vertex set, and  $X$ be a nonzero nonnegative eigenvector of $A(G)$ corresponding to $\rho(G)$.
Then
\[
\rho(G')-\rho(G)\geq
\frac{X^T\bigl(A(G')-A(G)\bigr)X}{X^TX}.
\]
In particular, if the numerator is positive, then $\rho(G')>\rho(G)$.
\end{lemma}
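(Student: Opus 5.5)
The plan is to use the Rayleigh--Ritz characterization of the largest eigenvalue of a real symmetric matrix, together with the eigen-equation for $X$. Since $A(G')$ is real symmetric, its largest eigenvalue $\lambda_1(A(G'))$ equals $\max_{Y\neq 0} \frac{Y^TA(G')Y}{Y^TY}$. For an adjacency matrix (entrywise nonnegative), the Perron--Frobenius theorem gives $\rho(G')=\lambda_1(A(G'))$: the spectral radius is itself an eigenvalue and dominates every eigenvalue in modulus. Taking $Y=X$, which is allowed because $X\neq 0$, we obtain
\[
\rho(G')\geq \frac{X^TA(G')X}{X^TX}.
\]

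Next I will use that $X$ is an eigenvector of $A(G)$ for $\rho(G)$. From $A(G)X=\rho(G)X$ we get $X^TA(G)X=\rho(G)X^TX$, that is,
\[
\rho(G)=\frac{X^TA(G)X}{X^TX}.
\]
Subtracting this identity from the previous inequality and using linearity of $Y\mapsto Y^TMY$ in $M$ gives
\[
\rho(G')-\rho(G)\geq \frac{X^T\bigl(A(G')-A(G)\bigr)X}{X^TX},
\]
which is the claimed bound. The ``in particular'' part is then immediate: $X^TX>0$, so a positive numerator makes the right-hand side positive, and hence $\rho(G')>\rho(G)$.

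There is no real obstacle here. The one point to check is that the maximum of the Rayleigh quotient equals $\rho(G')$, which is exactly the Perron--Frobenius fact for nonnegative symmetric matrices noted above. The nonnegativity of $X$ is not actually needed for the inequality. It matters only in later applications, where the numerator is evaluated as $2\sum_{uv\in E(G')\setminus E(G)}x_ux_v-2\sum_{uv\in E(G)\setminus E(G')}x_ux_v$ and its sign is controlled using the entries of $X$.
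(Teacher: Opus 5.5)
Your proof is correct: the paper gives no proof of this lemma and only cites Xu and Chang, and your Rayleigh-quotient argument (take $Y=X$ in $\max_{Y\neq 0}Y^TA(G')Y/Y^TY$, then subtract $X^TA(G)X=\rho(G)X^TX$) is the standard argument behind that result. The Perron--Frobenius step is not even needed for this direction, since $\rho(G')\geq\lambda_1(A(G'))$ holds for any real symmetric matrix.
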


A matching is a set of pairwise vertex-disjoint edges, and
$m(G)$ is size of a maximum matching of $G$. A matching covers its endpoints and is perfect
if it covers $V(G)$. $G$ is factor-critical if $G-v$ has a perfect
matching for every vertex $v\in V(G)$.
\begin{lemma}\cite{Lovasz1986}.\label{lem:GE}
Let $G$ be any graph. Denote by $D$ the set of all vertices in $G$ which are not covered by at least one maximum matching of $G$. Let $A$ be the set of vertices in $V(G)\setminus D$ adjacent to at least one vertex in $D$. Finally let $C=V(G)\setminus (A\cup D)$. Then
\begin{itemize}
	\item[(a)] the components of the subgraph induced by $D$ are factor-critical,
	\item[(b)] the subgraph induced by $C$ has a perfect matching.
	\item[(c)] if $M$ is any maximum matching of $G$, it contains a near-perfect matching of each component of $D$, a perfect matching of each component of $C$ and matches all points of $A$ with points in distinct components of $D$,
	
\end{itemize}
\end{lemma}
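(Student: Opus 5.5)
Since this is the classical Gallai--Edmonds structure theorem, I would give the standard proof. It rests on two auxiliary facts, after which an induction on $|A|$ finishes the argument.

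\textbf{Gallai's lemma.} First I would prove: if $G$ is connected and every vertex of $G$ is missed by some maximum matching, then $G$ is factor-critical. Suppose not. Then every maximum matching misses at least two vertices. Among all pairs $(M,\{u,v\})$ with $M$ a maximum matching and $u\neq v$ both missed by $M$, choose one minimizing $\operatorname{dist}(u,v)$.
\begin{itemize}
\item If $\operatorname{dist}(u,v)=1$, then $M+uv$ is a larger matching, a contradiction.
\item Otherwise, take an internal vertex $w$ of a shortest $u$--$v$ path. By minimality, $M$ covers $w$. By hypothesis, some maximum matching $N$ misses $w$; choose such an $N$ maximizing $|M\cap N|$.
\item $N$ covers $u$ and $v$, since otherwise $(N,\{w,u\})$ or $(N,\{w,v\})$ would give a closer pair.
\item The component of $M\triangle N$ starting at $u$ is an alternating path. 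It cannot end at $w$, nor at a vertex missed by $N$, because $M$ and $N$ have the same size. Switching $N$ along this path produces a maximum matching that still misses $w$, now misses $u$, and shares more edges with $M$, or else is closer. This contradicts the choice of $N$ or the minimality of the distance.
\end{itemize}

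\textbf{Stability lemma.} Next I would prove: for $a\in A$, the graph $G-a$ has $m(G-a)=m(G)-1$, $D(G-a)=D$, $A(G-a)=A\setminus\{a\}$ and $C(G-a)=C$. I expect this to be the main obstacle, since it requires careful alternating-path arguments.
\begin{itemize}
\item Because $a\notin D$, every maximum matching covers $a$, so $m(G-a)=m(G)-1$.
\item Hence a maximum matching of $G-a$ is exactly a maximum matching of $G$ with the edge at $a$ deleted, restricted to those containing $a$. This immediately gives $D\subseteq D(G-a)$.
\item For the converse inclusion, take $x\notin D$ and suppose some maximum matching $N'$ of $G-a$ misses $x$. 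Let $d\in D$ be a neighbour of $a$ and let $M_d$ be a maximum matching of $G$ missing $d$. Consider the symmetric difference of $M_d$ with $N'$ (extended suitably at $a$). Following the alternating path from $x$ should produce a maximum matching of $G$ missing $x$ or creating an augmenting path, which is a contradiction.
\item The statements about $A(G-a)$ and $C(G-a)$ then follow from the definitions, because the neighbourhoods of $D$ are unchanged apart from $a$.
\end{itemize}

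\textbf{Induction on $|A|$.} Applying the stability lemma repeatedly, I remove all of $A$. This gives $G'=G-A$ with $m(G')=m(G)-|A|$, $D(G')=D$, $A(G')=\varnothing$ and $C(G')=C$. Since $A(G')=\varnothing$, no edge joins $D$ and $C$ in $G'$, so each component of $G'$ lies entirely inside $D$ or entirely inside $C$.
\begin{itemize}
\item \textbf{Part (a).} A component $K\subseteq D$ has every vertex missed by some maximum matching of $G'$, and hence by some maximum matching of $K$. By Gallai's lemma, $K$ is factor-critical.
\item \textbf{Part (b).} Every vertex of $C$ is covered by every maximum matching of $G'$. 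A component inside $C$ therefore has a perfect matching, since an uncovered vertex would lie in $D(G')$.
\item \textbf{Part (c).} Let $M$ be a maximum matching of $G$. Removing the at most $|A|$ edges of $M$ at $A$ leaves a matching of $G'$. Counting gives $|M|\le m(G')+|A|$, and equality forces three things:
\begin{itemize}
\item every vertex of $A$ is matched;
\item the restriction to $G'$ is a maximum matching of $G'$;
\item no edge of $M$ joins two vertices of $A$, and vertices of $A$ are matched into $D$ (a vertex of $C$ cannot be spared, by (b)).
\end{itemize}
A maximum matching of $G'$ must consist of near-perfect matchings of the $D$-components and perfect matchings of the $C$-components. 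Since a factor-critical component leaves only one vertex free, no two vertices of $A$ can be matched into the same $D$-component. This yields (c).
\end{itemize}
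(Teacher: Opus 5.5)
The paper gives no proof of this lemma; it is quoted from Lov\'asz--Plummer. Your architecture (Gallai's lemma, the stability lemma for $a\in A$, deleting $A$ vertex by vertex, then reading off (a)--(c) in $G-A$) is the standard proof from that source, and your induction and the counting for (c) are correct. However, the two alternating-path arguments do not work as written.

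\textbf{Gallai's lemma.} Since $u$ is $M$-exposed, the component $P_u$ of $M\triangle N$ at $u$ starts with an $N$-edge. It cannot end at an $M$-exposed vertex, since it would then be $M$-augmenting. So it \emph{must} end with an $M$-edge at an $N$-exposed vertex, which is the opposite of your claim. Moreover, $w$ is $N$-exposed and $M$-covered, so it is exactly a possible endpoint of $P_u$. If $P_u$ ends at $w$, switching $N$ along it covers $w$ and nothing is contradicted; $|M|=|N|$ does not rule this out. The fix: $v$ is $M$-exposed and $N$-covered, so $v\notin P_u$ and $P_v$ is a different component. Since $w$ has degree $1$ in $M\triangle N$, at most one of $P_u,P_v$ ends at $w$. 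Switching $N$ along the other yields a maximum matching missing $w$ and $u$ (or $v$), contradicting the minimality of the distance. The maximization of $|M\cap N|$ is then unnecessary.

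\textbf{Stability lemma.} Your sentence ``a maximum matching of $G-a$ is exactly a maximum matching of $G$ with the edge at $a$ deleted'' is false as a characterization. Take two disjoint triangles $xyz$, $x'y'z'$ plus a vertex $a$ joined to $x$ and $x'$. Then $a\in A$, yet $N'=\{xy,x'y'\}$ is a maximum matching of $G-a$ covering both neighbours of $a$. Only the direction $M\mapsto M-e_a$ is valid. It gives $D\subseteq D(G-a)$, but the example also shows that ``extending $N'$ at $a$'' cannot be the mechanism for the converse inclusion, which you leave at ``should produce''. What is needed is this. Let $P$ be the component of $M_d\triangle N'$ containing $x$; it starts with an $M_d$-edge.
\begin{itemize}
\item If $P$ ends with an $N'$-edge, then $M_d\triangle P$ is a maximum matching of $G$ missing $x$.
\item If $P$ ends with an $M_d$-edge at some $y\neq a$, then $P$ avoids $a$ (which has degree $1$ in $M_d\triangle N'$) and is $N'$-augmenting in $G-a$.
\item If $P$ ends at $a$, then $d\notin P$, because $d$ is $M_d$-exposed and differs from $x$ and $a$. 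So $(M_d\triangle P)+ad$ is a maximum matching of $G$ missing $x$.
\end{itemize}
Each case contradicts either $x\notin D$ or the maximality of $N'$. The last case is where $d\in N_G(a)\cap D$ and the choice of $M_d$ enter, and the extension at $a$ is made on the $M_d$ side, not on $N'$. With these two repairs your proposal becomes a complete proof.
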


\begin{remark}\label{rk111}

		Apply Lemma~6 to a graph $H$, and retain the notation $A,C,D$.
		Let $D_1,\ldots,D_l$ be the connected components of $H[D]$.
		By Lemma~\ref{lem:GE} (a), each $D_i$ is factor-critical, so we  write
		$|V(D_i)|=2r_i+1$. By the definition of $A$, $e(C,D)=0$.
		Let $M$ be a maximum matching of $H$. By Lemma~\ref{lem:GE} (c),
		$M\cap E(D_i)$ has $r_i$ edges for every $i$,
		$M\cap E(H[C])$ has $\frac{|C|}{2}$ edges, and $M$ has exactly
		$|A|$ edges between $A$ and $D$. These account for all
		edges of $M$. Therefore,
		\[
		m(H)=|M|=|A|+\frac{|C|}{2}+\sum_{i=1}^{l}r_i.
		\]
		
	\end{remark}

\begin{lemma}\label{lem:corebounds}
Let $G$ be a $1$-planar graph on $n$ vertices with two distinct vertices $u',u''$, and 
$H=G[V(G)\setminus\{u',u''\}]$, $d_{H}(u')=d_{H}(u'')=n-2$. Let $\varepsilon=1$ if $u'u''\in E(G)$ and
$\varepsilon=0$ otherwise. Then
$\Delta(H)\leq6$ and $e(H[U])\leq2|U|-\varepsilon,$ where $\varnothing\neq U\subseteq V(H).$
\end{lemma}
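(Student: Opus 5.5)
Both bounds will come from the edge bound of Lemma~\ref{lem:edge}, applied to a suitable subgraph of $G$ that contains $u'$ and $u''$.

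\textbf{Edge bound for $H[U]$.} Fix $\varnothing\neq U\subseteq V(H)$ and let $G_U=G[U\cup\{u',u''\}]$. Since $1$-planarity is inherited by subgraphs, $G_U$ is $1$-planar of order $|U|+2$. Because $u'$ and $u''$ are adjacent to every vertex of $H$, we have
\[
e(G_U)=e(H[U])+2|U|+\varepsilon .
\]
If $|U|+2\ge 3$, which always holds as $U\neq\varnothing$, Lemma~\ref{lem:edge} gives $e(G_U)\le 4(|U|+2)-8=4|U|$. Rearranging yields $e(H[U])\le 2|U|-\varepsilon$, as claimed.

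\textbf{Degree bound.} Take any $v\in V(H)$ and suppose $d_H(v)=d\ge 7$. Choose $U=\{v\}\cup N_H(v)$, so $|U|=d+1$ and $e(H[U])\ge d$. The previous bound only gives $d\le 2d+2-\varepsilon$, which is useless. So degree must be handled by a sharper local argument.

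The subgraph on $\{u',u'',v\}$ together with $N_H(v)$ contains $K_{3,d}$ with parts $\{u',u'',v\}$ and $N_H(v)$. The natural tool is Lemma~\ref{lem:bipartite} with $s=3$ and $t=d$, which gives
\[
3d=e(K_{3,d})\le 2(3+d)+12-12=2d+6,
\]
hence $d\le 6$. Note that $s=3\le t=d$ holds since $d\ge 7$. So $d\ge 7$ contradicts Lemma~\ref{lem:bipartite}, and therefore $\Delta(H)\le 6$.

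\textbf{Expected obstacle.} The only step that needs care is realizing that the whole-subgraph count is too weak for the degree bound. The fix is to pass to the bipartite subgraph $K_{3,d}$ and invoke the stronger bipartite edge bound of Lemma~\ref{lem:bipartite}. This is consistent with the known fact that $K_{3,7}$ is not $1$-planar. I would double-check that the hypothesis $2\le s\le t$ is met, which it is for $d\ge 7$.
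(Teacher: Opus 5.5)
Your proof is correct and follows the paper's argument: the degree bound comes from the $K_{3,d}$ subgraph with parts $\{u',u'',v\}$ and $N_H(v)$ together with Lemma~\ref{lem:bipartite}, and the edge bound comes from applying Lemma~\ref{lem:edge} to $G[U\cup\{u',u''\}]$, which has exactly $e(H[U])+2|U|+\varepsilon$ edges. The only difference is cosmetic: you write out the inequality $3d\le 2d+6$ explicitly, whereas the paper simply notes that a $K_{3,7}$ would violate Lemma~\ref{lem:bipartite}.
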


\noindent \emph{Proof.}
Suppose for contradiction that there exists a vertex $v\in V(H)$ such that $d_{H}(v)\ge7$, then $G[N_H(v)\cup \{u',u'',v\}]$ contains a copy of $K_{3,7}$, contrary to
Lemma~\ref{lem:bipartite}. Thus, $\Delta(H)\leq6$.
For every  $\varnothing\neq U\subseteq V(H)$, by Lemma~\ref{lem:edge},
\[
e(H[U])+2|U|+\varepsilon
=e\bigl(G[U\cup\{u',u''\}]\bigr)
\leq4(|U|+2)-8=4|U|.
\]
Thus, $e(H[U])\leq2|U|-\varepsilon.$ 

\begin{lemma}\label{lem:core}
Let $G$ be a $1$-planar graph on $n$ vertices with two distinct vertices $u',u''$, set
$H=G[V(G)\setminus\{u',u''\}]$ and $d_{H}(u')=d_{H}(u'')=n-2$. Then $e(H)\le6m(H)$.
If  $u'u''\in E(G)$ and $e(H)=6m(H)$, then  $H$ has a
bipartition $(A,D)$ such that $|A|=m(H)$ and $d_H(a)=6$ for every
$a\in A$.
\end{lemma}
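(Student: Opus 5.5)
We will prove the lemma by applying the Gallai--Edmonds decomposition (Lemma~\ref{lem:GE}) to $H$ and bounding the edges of $H$ piece by piece against the matching formula of Remark~\ref{rk111}. Keep the notation $A,C,D$ and $D_1,\dots,D_l$ with $|V(D_i)|=2r_i+1$, so that $m(H)=|A|+\frac{|C|}{2}+\sum_{i}r_i$. Since $e(C,D)=0$ by the definition of $A$, every edge of $H$ falls into exactly one of three classes:
\begin{itemize}
\item[(1)] edges with at least one end in $A$;
\item[(2)] edges inside $H[C]$;
\item[(3)] edges inside some component $D_i$.
\end{itemize}

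\textbf{Class (1).} Summing degrees over $A$ counts each edge of $H[A]$ twice and every other edge of this class once. Hence the number of edges in class (1) equals $\sum_{a\in A}d_H(a)-e(A)$. By Lemma~\ref{lem:corebounds}, $\Delta(H)\le 6$, so this is at most $6|A|-e(A)\le 6|A|$.

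\textbf{Class (2).} If $C\neq\varnothing$, Lemma~\ref{lem:corebounds} gives $e(H[C])\le 2|C|-\varepsilon<3|C|$. If $C=\varnothing$ the contribution is $0$.

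\textbf{Class (3).} If $r_i=0$, then $D_i$ is a single vertex and contributes no edges. If $r_i=1$, then $D_i$ has three vertices, so $e(D_i)\le 3<6=6r_i$. If $r_i\ge 2$, Lemma~\ref{lem:corebounds} gives $e(D_i)\le 2(2r_i+1)-\varepsilon\le 4r_i+2<6r_i$. In every case $e(D_i)\le 6r_i$, with equality only when $r_i=0$.

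Adding the three classes gives
\[
e(H)\le 6|A|+3|C|+6\sum_i r_i=6m(H),
\]
which proves the first assertion.

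For the equality case, assume $e(H)=6m(H)$. Then every one of the inequalities above must be tight. This forces:
\begin{itemize}
\item[(i)] $C=\varnothing$, since class (2) is strict whenever $C$ is nonempty;
\item[(ii)] $r_i=0$ for all $i$, so $D$ is an independent set of singletons;
\item[(iii)] $e(A)=0$ and $d_H(a)=6$ for every $a\in A$, from tightness in class (1).
\end{itemize}
Consequently $V(H)=A\cup D$ with both $A$ and $D$ independent, so $(A,D)$ is a bipartition of $H$. Moreover $m(H)=|A|$ by Remark~\ref{rk111}, and $d_H(a)=6$ for every $a\in A$, as claimed.

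The only delicate point is the equality analysis. The strict inequalities $2|C|<3|C|$ and $e(D_i)<6r_i$ for $r_i\ge 1$ must hold unconditionally, and tightness in class (1) must force both $e(A)=0$ and all degrees in $A$ equal to $6$. The hypothesis $u'u''\in E(G)$ (that is, $\varepsilon=1$) only strengthens the strict inequalities used above and is not essential to the argument.
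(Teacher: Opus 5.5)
Your proof is correct and follows the paper's argument essentially step for step: the Gallai--Edmonds decomposition of $H$, the same three-way split of $E(H)$ bounded via Lemma~\ref{lem:corebounds} against $m(H)=|A|+\frac{|C|}{2}+\sum_i r_i$, and the same tightness analysis. The one difference is in excluding $r_i\ge1$: the paper's bound $4r_i+2\le 6r_i$ is tight at $r_i=1$, so it invokes $\varepsilon=1$ to get $e(D_i)\le 4r_i+1<6r_i$, whereas your trivial bound $e(D_i)\le 3$ for $r_i=1$ makes the inequality strict unconditionally, so your remark that the hypothesis $u'u''\in E(G)$ is not needed for the equality case is correct.
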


\noindent \emph{Proof.}
Apply Remark \ref{rk111}, and retain its notation $A,C,D,D_i,r_i$. Then $e(C,D)=0 $.
Let $E_A$ be the set of edges of $H$ with at least one endpoint in $A$.
By Lemma~\ref{lem:corebounds},
$|E_A|=\sum_{a\in A}d_H(a)-e(H[A])\leq6|A|$.
If $C\neq\varnothing$, then
$e(H[C])\leq2|C|<3|C|=6\frac{|C|}{2}$.
If $C=\varnothing$, then trivially $e(H[C])=0$. For $D_i$, if $r_i\geq1$ , then
$e(D_i)\leq2(2r_i+1)=4r_i+2\leq6r_i.$ If $r_i=0$ , then $e(D_i)=0$.
Summarizing these contributions, 
\[
\begin{split}
e(H)&=|E_A|+e(H[C])+\sum_i e(D_i)\leq6|A|+6\frac{|C|}{2}+6\sum_i r_i
=6m(H).
\end{split}
\]
Equality holds if and only if all inequalities in each step become equalities.
Now suppose $u'u''\in E(G)$ and $e(H)=6m(H)$. Then $|E_A|=6|A|$ and $C=\varnothing$.  If $r_i\geq1$, by
Lemma~\ref{lem:corebounds}, then $e(D_i)\leq2(2r_i+1)-1=4r_i+1<6r_i.$ Thus, $r_i=0$ for every $i$. Hence $D$ is an independent vertex set, and
\[
0=6|A|-|E_A|
=\sum_{a\in A}\bigl(6-d_H(a)\bigr)+e(H[A]).
\]
Every summand is nonnegative. Thus $A$ is an independent vertex set and $d_H(a)=6$
for every $a\in A$. Finally, by Remark \ref{rk111}, $|A|=m(H)$.

\begin{lemma}\label{lem:friendship}
Let $t\geq2$ be an integer and  $H$ be a graph with $|V(H)|\geq2t-1$. Then  $K_2\vee H\text{ is }F_t\text{-free}$ if and only if $m(H)\leq t-2.$
\end{lemma}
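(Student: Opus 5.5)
Let $u',u''$ denote the two vertices of the $K_2$ and $R=V(H)$. The key observation is that in $K_2\vee H$ every triangle lies in one of three classes: a triangle of the form $u'u''w$ with $w\in R$; a triangle $u'vw$ or $u''vw$ with $vw\in E(H)$; or a triangle entirely inside $H\cup\{\cdot\}$ whose centre lies in $R$. A copy of $F_t$ has a centre $c$ and $t$ pairwise vertex-disjoint edges in $G[N(c)]$, that is, a matching of size $t$ in the neighbourhood graph of $c$. I will therefore argue by cases on the centre, bounding the matching number of the link graph in each case.

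\emph{Sufficiency.} Assume $m(H)\le t-2$.
\begin{itemize}
\item If $c=u'$, the link graph is $G[R\cup\{u''\}]=K_1\vee H$, with $u''$ joined to all of $R$. A matching in $K_1\vee H$ uses at most one edge through $u''$, so its matching number is at most $m(H)+1\le t-1<t$.
\item The case $c=u''$ is symmetric.
\item If $c=v\in R$, the link graph is $G[\{u',u''\}\cup N_H(v)]$, which is $K_2\vee H[N_H(v)]$. A matching there has at most two edges touching $\{u',u''\}$; the remaining edges form a matching in $H[N_H(v)]$. Hence its size is at most $2+m(H[N_H(v)])$.
\end{itemize}
To make the last case below $t$, I need $m(H[N_H(v)])\le t-3$, which is not immediate. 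Instead I handle it directly. Since $v$ is a vertex of $H$, any matching of $H[N_H(v)]$ together with an edge $vw$ gives a matching of $H$, provided some neighbour $w$ of $v$ is left uncovered. Note also that the edge $u'u''$ counts as a single edge. So let $M$ be a matching in the link of $v$ of size $t$, and split on how it meets $\{u',u''\}$:
\begin{itemize}
\item If $M$ contains $u'u''$, the other $t-1$ edges lie in $H[N_H(v)]$. Then $m(H)\ge t-1$, a contradiction.
\item Otherwise $M$ has at most two edges $u'x$, $u''y$ with $x,y\in N_H(v)$, and at least $t-2$ edges inside $N_H(v)$. Add the edge $vx$, where $x$ is not covered by those $t-2$ edges. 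This gives a matching of size $t-1$ in $H$, again a contradiction.
\end{itemize}
The only subtlety here is that $x$ must be uncovered by the inner edges, which holds because $M$ is a matching. So $K_2\vee H$ is $F_t$-free.

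\emph{Necessity.} Assume $m(H)\ge t-1$ and take a matching $e_1,\dots,e_{t-1}$ in $H$. These $t-1$ edges together with $u'$ give $t-1$ triangles centred at $u'$ that share only $u'$. We need one more triangle through $u'$ that is disjoint from the rest. Use $u'u''w$, where $w\in R$ is not covered by the matching. Such a $w$ exists because $|R|=|V(H)|\ge 2t-1>2(t-1)$. This yields $F_t$.

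The main care point is the centre-in-$R$ case of sufficiency, which is settled by the augmentation with $vx$ described above. Everything else is direct counting.
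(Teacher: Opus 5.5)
Your proof is correct and follows essentially the same route as the paper: a case split on the centre of $F_t$, with the centre-in-$R$ case settled by exchanging the two edges $u'x,u''y$ for the single edge $vx$ (the paper's $zu_1$). The only step left implicit is when fewer than two edges of $M$ meet $\{u',u''\}$: then at least $t-1$ edges of $M$ already lie inside $N_H(v)$, so $m(H)\ge t-1$ with no augmentation needed.
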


\noindent \emph{Proof.}
Let $V(K_2)=\{u',u''\}$. We first prove the sufficiency by contradiction. Assume that $K_2\vee H$ contains a copy
of $F_t$ with common vertex $z$. Let $I$ be the set of independent edges remaining after deleting $z$ from $F_t$,  $V_I$ be the vertex set spanned by the edges in $I$. Thus, $|I| \ge t$. If $z\in\{u',u''\}$, for convenience, assume that $z=u'$, then at most one edge from $I$ is incident to $u''$, which forces  $m(H) \ge t-1$.
Next suppose that $z\in V(H)$. If $\big|V(K_2) \cap V_I\big| \le 1$, then $m(H) \ge t-1$. If $V(K_2) \subseteq V_I$ and $u'u''\in I$, then $m(H) \ge t-1$. If $V(K_2) \subseteq V_I$ and $u'u'' \notin I$, then  there exist $u_1,u_2\in V(H)\setminus\{z\}$ such that $\{u_1u',u_2u''\}\subseteq I$,  thus $|(I\setminus\{u_1u',u_2u''\})\cup\{zu_1\}|\ge t-1$, i.e., $m(H)\ge t-1$. 
Next, we establish the necessity. Suppose that $|V(H)|\geq2t-1$ and
$v_1v_2,\ldots,v_{2t-3}v_{2t-2}$ form a matching in $H$. Thus, $V(H)\setminus\{v_1, \dots , v_{2t-2}\} \neq \varnothing$. Let $w \in V(H)\setminus\{v_1, \dots , v_{2t-2}\}. $
Then the triangles
$u'v_1v_2u',\ \ldots,\ u'v_{2t-3}v_{2t-2}u',\ u'u''wu'$
meet pairwise exactly at $u'$ and form a copy of $F_t$. This is a contradiction. 

\begin{figure}[htbp]
	\centering
	
	%==================== (a) ====================
	\begin{minipage}[t]{0.48\textwidth}
		\centering
		\resizebox{\linewidth}{!}{%
			\begin{tikzpicture}[
				scale=1,
				transform shape,
				x=0.01cm,
				y=-0.01cm,
				blackedge/.style={
					draw=black,
					line width=1pt,
					line cap=round,
					line join=round
				},
				rededge/.style={
					draw=red,
					line width=1pt,
					line cap=round,
					line join=round
				},
				vertexlabel/.style={
					font=\fontsize{10}{10}\selectfont,
					text=black,
					inner sep=0pt,
					outer sep=0pt,
					fill=none,
					draw=none
				},
				crossinglabel/.style={
					font=\fontsize{10}{10}\selectfont,
					text=black,
					inner sep=0pt,
					outer sep=0pt,
					fill=none,
					draw=none
				}
				]
				
				%------------------------------------------------
				% Vertex positions traced from the original image.
				%------------------------------------------------
				\coordinate (uL)  at (25.00,1045.00);
				\coordinate (uR)  at (1284.00,1046.00);
				
				\coordinate (z1)  at (654.80,251.30);
				
				\coordinate (a11) at (653.00,306.50);
				\coordinate (a12) at (596.00,413.20);
				\coordinate (a13) at (716.46,411.85);
				
				\coordinate (a14) at (540.36,195.09);
				\coordinate (a15) at (657.46,139.15);
				\coordinate (a16) at (770.50,198.00);
				
				% Crossing point only
				\coordinate (w1) at (655.360062,467.797453);
				
				% Red star edges
				\foreach \p in {a11,a12,a13,a14,a15,a16}{
					\draw[rededge] (z1) -- (\p);
				}
				
				% Bottom edge
				\draw[blackedge] (uL) -- (uR);
				
				% Straight edges from u'
				\foreach \p in {z1,a11,a12,a13,a14,a15}{
					\draw[blackedge] (uL) -- (\p);
				}
				
				% Straight edges from u''
				\foreach \p in {z1,a11,a12,a13,a15,a16}{
					\draw[blackedge] (uR) -- (\p);
				}
				
				% Curved outer edges
				\draw[blackedge]
				(uL)
				.. controls (309.7106,305.8093)
				and (606.5631,-230.9078)
				.. (a16);
				
				\draw[blackedge]
				(uR)
				.. controls (1136.3792,678.1584)
				and (729.9053,-332.6859)
				.. (a14);
				
				% Actual graph vertices
				\foreach \p in {uL,uR}{
					\fill[black] (\p) circle[radius=1.8pt];
				}
				
				\foreach \p in {z1,a11,a12,a13,a14,a15,a16}{
					\filldraw[
					fill=red,
					draw=black,
					line width=0.35pt
					] (\p) circle[radius=1.6pt];
				}
				
				% Labels
				\node[vertexlabel, anchor=north]      at (25,1054)   {$u'$};
				\node[vertexlabel, anchor=north west] at (1288,1054) {$u''$};
				
				\node[vertexlabel, anchor=south] at (657,128) {$a_{15}$};
				\node[vertexlabel, anchor=east]  at (527,194) {$a_{16}$};
				\node[vertexlabel, anchor=west]  at (784,199) {$a_{14}$};
				
				\node[vertexlabel, anchor=west] at (679,254) {$z_1$};
				
				\node[vertexlabel, font=\fontsize{9}{9}\selectfont, anchor=north] at (655,333) {$a_{12}$};
				\node[vertexlabel, font=\fontsize{9}{9}\selectfont, anchor=west]  at (607,412) {$a_{11}$};
				\node[vertexlabel, font=\fontsize{9}{9}\selectfont, anchor=east]  at (705,407) {$a_{13}$};
				
				\node[crossinglabel, anchor=north, yshift=-6pt] at (w1) {$w_1$};
				
				% Bounding box
				\pgfresetboundingbox
				\path[use as bounding box] (0,0) rectangle (1327,1088);
				
			\end{tikzpicture}%
		}
		\vspace{0.3em}
		
		{\small (a)}
	\end{minipage}
	\hfill
	%==================== (b) ====================
	\begin{minipage}[t]{0.48\textwidth}
		\centering
		\resizebox{\linewidth}{!}{%
			\begin{tikzpicture}[
				scale=1,
				transform shape,
				x=0.01cm,
				y=-0.01cm,
				blackedge/.style={
					draw=black,
					line width=1pt,
					line cap=round,
					line join=round
				},
				rededge/.style={
					draw=red,
					line width=1pt,
					line cap=round,
					line join=round
				},
				vertexlabel/.style={
					font=\fontsize{10}{10}\selectfont,
					text=black,
					inner sep=0pt,
					outer sep=0pt,
					fill=none,
					draw=none
				},
				smalllabel/.style={
					vertexlabel,
					font=\fontsize{9}{9}\selectfont
				},
				crossinglabel/.style={
					vertexlabel
				}
				]
				
				%------------------------------------------------
				% Coordinates traced from the reference image.
				%------------------------------------------------
				
				% Common endpoints
				\coordinate (uL) at (45.000,1044.000);
				\coordinate (uR) at (1316.000,1046.000);
				
				% Upper star
				\coordinate (z1)  at (681.120,242.880);
				\coordinate (a11) at (621.500,407.000);
				\coordinate (a12) at (679.750,299.000);
				\coordinate (a13) at (743.330,405.670);
				\coordinate (a14) at (797.330,190.330);
				\coordinate (a15) at (684.330,130.670);
				\coordinate (a16) at (565.500,187.000);
				
				% Lower star
				\coordinate (z2)  at (683.750,702.750);
				\coordinate (a21) at (628.000,820.500);
				\coordinate (a22) at (682.500,759.500);
				\coordinate (a23) at (740.500,819.000);
				\coordinate (a24) at (769.000,666.000);
				\coordinate (a25) at (684.000,655.000);
				\coordinate (a26) at (600.800,667.600);
				
				% Bottom family
				\coordinate (v1)  at (684.000,904.000);
				\coordinate (v2)  at (683.000,932.000);
				\coordinate (vb1) at (683.000,999.000);
				\coordinate (vb)  at (683.000,1024.000);
				
				% Crossing labels only
				\coordinate (w1) at (681.490,462.196);
				\coordinate (w2) at (681.579,838.061);
				
				% Red star edges
				\foreach \p in {a11,a12,a13,a14,a15,a16}{
					\draw[rededge] (z1) -- (\p);
				}
				\foreach \p in {a21,a22,a23,a24,a25,a26}{
					\draw[rededge] (z2) -- (\p);
				}
				
				% Common bottom edge
				\draw[blackedge] (uL) -- (uR);
				
				% Upper copy
				\foreach \p in {z1,a11,a12,a13,a15,a16}{
					\draw[blackedge] (uL) -- (\p);
				}
				\foreach \p in {z1,a11,a12,a13,a14,a15}{
					\draw[blackedge] (uR) -- (\p);
				}
				
				\draw[blackedge]
				(uL)
				.. controls (447.9636,3.2742)
				and (675.2374,-132.0643)
				.. (a14);
				
				\draw[blackedge]
				(uR)
				.. controls (782.3014,-319.8481)
				and (617.2108,49.3222)
				.. (a16);
				
				% Lower copy
				\foreach \p in {z2,a21,a22,a23,a25,a26}{
					\draw[blackedge] (uL) -- (\p);
				}
				\foreach \p in {z2,a21,a22,a23,a24,a25}{
					\draw[blackedge] (uR) -- (\p);
				}
				
				\draw[blackedge]
				(uL)
				.. controls (141.8757,956.0478)
				and (226.3471,879.2479)
				.. (300.0000,821.2045)
				.. controls (600.6149,584.3002)
				and (721.0079,600.8900)
				.. (a24);
				
				\draw[blackedge]
				(uR)
				.. controls (1256.8335,992.1617)
				and (1162.0907,902.2478)
				.. (1060.0000,822.5096)
				.. controls (875.1987,678.1700)
				and (666.3206,555.7559)
				.. (a26);
				
				% Bottom fan
				\foreach \p in {v1,v2,vb1,vb}{
					\draw[blackedge] (uL) -- (\p);
					\draw[blackedge] (uR) -- (\p);
				}
				
				% Actual vertices
				\foreach \p in {uL,uR}{
					\fill[black] (\p) circle[radius=1.8pt];
				}
				\foreach \p in {v1,v2,vb1,vb}{
					\fill[black] (\p) circle[radius=1.6pt];
				}
				\foreach \p in {z1,a11,a12,a13,a14,a15,a16,z2,a21,a22,a23,a24,a25,a26}{
					\filldraw[
					fill=red,
					draw=black,
					line width=0.35pt
					] (\p) circle[radius=1.6pt];
				}
				
				% Labels
				\node[vertexlabel, anchor=north] at (24,1052) {$u'$};
				\node[vertexlabel, anchor=north] at (1339,1052) {$u''$};
				
				\node[vertexlabel, anchor=south] at (684,119) {$a_{15}$};
				\node[vertexlabel, anchor=east]  at (552,186) {$a_{16}$};
				\node[vertexlabel, anchor=west]  at (811,188) {$a_{14}$};
				\node[vertexlabel, anchor=west]  at (701,246) {$z_1$};
				
				\node[smalllabel, anchor=north] at (680,318) {$a_{12}$};
				\node[smalllabel, font=\fontsize{7}{7}\selectfont, anchor=west] at (636,405) {$a_{11}$};
				\node[smalllabel, font=\fontsize{7}{7}\selectfont, anchor=east] at (730,404) {$a_{13}$};
				
				\node[crossinglabel, anchor=north, yshift=-6pt] at (w1) {$w_1$};
				
				\node[smalllabel, anchor=south] at (684,649) {$a_{25}$};
				\node[smalllabel, anchor=east]  at (589,669) {$a_{26}$};
				\node[smalllabel, anchor=north] at (772,677) {$a_{24}$};
				\node[smalllabel, anchor=west]  at (710,704) {$z_2$};
				\node[smalllabel, anchor=north] at (684,771) {$a_{22}$};
				\node[smalllabel, font=\fontsize{7}{7}\selectfont, anchor=west] at (642,817) {$a_{21}$};
				\node[smalllabel, font=\fontsize{7}{7}\selectfont, anchor=east] at (728,817) {$a_{23}$};
				
				\node[smalllabel, anchor=north, yshift=-3pt] at (w2) {$w_2$};
				
				\node[smalllabel, anchor=south west] at (695,901) {$v_1$};
				\node[smalllabel, anchor=south west] at (695,930) {$v_2$};
				\node[smalllabel, anchor=south west] at (695,996) {$v_{b-1}$};
				\node[smalllabel, anchor=south west] at (695,1022) {$v_b$};
				
				% Vertical ellipsis
				\foreach \yy in {946,960,974}{
					\fill[black] (683,\yy) circle[radius=0.4pt];
				}
				
				% Bounding box
				\pgfresetboundingbox
				\path[use as bounding box] (0,0) rectangle (1358,1084);
				
			\end{tikzpicture}%
		}
		\vspace{0.3em}
		
		{\small (b)}
	\end{minipage}
		\par\vspace{1pt}
	\caption{$K_2\vee S_6$ and $K_2\vee (2S_6\cup bK_1)$}
	\label{Fig:1}
	
\end{figure}

\begin{remark}\label{rk222}

	If $a=0, b=0$, then $K_2$ is a $1$-planar graph. If $a=0, b\ge 1$, then $K_2\vee bK_1$ is a $1$-planar graph. Next, we only need to consider $a\ge1$ and $b\ge0$. Note that Figure \ref{Fig:1}$(a)$ exhibits a $1$-planar drawing of the graph $K_2 \vee  S_6$.  We embed another copy of $S_6$ inside the triangular region bounded by $u'w_1u''u'$, and connect every vertex to $u'$ and $u''$. This yields a $1$-planar drawing of $K_2\vee 2S_6$, as illustrated in Figure \ref{Fig:1}$(b)$. Repeating this construction, for the drawing of $K_2\vee iS_6$, we embed a new copy of $S_6$ inside the triangular face $u'w_iu''u'$ and connect all its vertices to $u'$ and $u''$. By carrying out this procedure $a$ times, we obtain a $1$-planar drawing of $K_2\vee aS_6$. Then we insert $b$ isolated vertices within the triangular region $u'w_au''u'$, and each of these vertices is joined to $u'$ and $u''$. This produces a $1$-planar drawing of $K_2\vee (aS_6\cup bK_1)$, and Figure \ref{Fig:1}$(b)$ shows the case $a=2$. Consequently, $K_2\vee (aS_6\cup bK_1)$ is 1-planar.

\end{remark}

For a face $f$, $\partial(f)$ denotes its boundary, and the size $d(f)$ is the number of the edges contained in its boundary with each repeated edge counted twice.
\begin{lemma}\label{lem:repair}
Let $G$ be a $1$-planar graph with $n\geq20$ vertices, $\rho=\rho(G)$, and 
$X=(x_v)_{v\in V(G)}^{\mathsf T}$
be a nonnegative Perron vector normalized by
$\max_{v\in V(G)}x_v=1$. Suppose that
$V(G)=\{u',u''\}\cup R\cup B,
R\subseteq N_{G}(u')\cap N_{G}(u''), |R|\geq\alpha n,$
where the vertex subsets are pairwise disjoint and $\alpha=\frac{4949}{5000}$. Let $G^*$ be obtained by retaining
$G[R\cup\{u',u''\}]$ and making each vertex of $B$ adjacent precisely
to $u',u''$. Then there exists $D'\subseteq E(G[R])$ such that
$|D'|\leq3$, $G^*-D'$ is $1$-planar, and
\[
X^T\bigl(A(G^*)-A(G^*-D')\bigr)X\leq\frac{38400}{\rho^2}.
\]
\end{lemma}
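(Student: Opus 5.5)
The plan is to start from a fixed $1$-planar drawing of $G$ with no two adjacent edges crossing, and to produce a $1$-planar drawing of $G^*-D'$ directly. Deleting every edge incident to $B$ gives a $1$-planar drawing of $G[R\cup\{u',u''\}]$. The task is then to re-insert the $|B|$ vertices of $B$ as degree-$2$ vertices adjacent to $u',u''$, deleting only a few edges of $G[R]$. The spectral bound will come from showing that every endpoint of a deleted edge has small Perron entry.

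\emph{Step 1 (drawing).} The key tool is a curve $\gamma$ from $u'$ to $u''$ along which many parallel copies can be drawn. All copies cross exactly the same edges, and they do not cross each other. If an edge crosses $\gamma$, it gets crossed $|B|$ times, so it must be deleted; if $\gamma$ is uncrossed, no deletion is needed. Three sources for $\gamma$ come to mind.
\begin{enumerate}[label=(\alph*)]
\item If $u'u''\in E(G)$ and it is uncrossed, take $\gamma=u'u''$.
\item If some edge $u'r_1$ crosses some edge $u''r_2$ at a point $c$, take $\gamma$ to be the half of $u'r_1$ from $u'$ to $c$ followed by the half of $u''r_2$ from $c$ to $u''$. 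Since each edge is crossed at most once, these half-edges cross nothing else, so $\gamma$ is clean and $D'=\varnothing$.
\item Otherwise, every crossing on an edge $u'r$ or $u''r$ with $r\in R$ involves an edge of $G[R]$, an edge to $B$ (already removed), or the edge $u'u''$. The edge $u'u''$ is crossed at most once, so it spoils at most one $r$. Since $|R|\ge\alpha n$ is huge, we can pick $r\in R$ whose edges $ru'$ and $ru''$ are crossed only by edges of $G[R]$ (at most one each). We then take $\gamma$ to run close to $u'r$, turn around $r$, and continue close to $ru''$. Each new vertex is placed near $r$.
\end{enumerate}

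\emph{Step 2 (the turn at $r$).} When $\gamma$ turns around $r$ in case (c), it crosses the edges at $r$ lying in the angular sector between $ru'$ and $ru''$ on the chosen side. I would choose $r$ so that, after the edges to $B$ are removed, $ru'$ and $ru''$ are consecutive (or nearly so) in the rotation at $r$. This should be possible for a positive fraction of $R$ by a counting argument on the planarization of $K_{2,|R|}\subseteq G$. At most one extra edge of $G[R]$ should then be cut, giving $|D'|\le 3$ in total.

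This choice of $r$ is the step I expect to be the main obstacle. The fallback is to let $\gamma$ sit in a region of the planarization bounded by $u'$, $r$, $u''$ and a neighbouring $r'$, and to argue via face sizes. The surplus of $\alpha$ over $1$ should give enough room to find such a region crossed by at most $3$ edges of $G[R]$.

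\emph{Step 3 (spectral estimate).} Every vertex $v\in R$ is adjacent to both $u'$ and $u''$. The argument of Lemma~\ref{lem:corebounds} (a $K_{3,7}$ is forbidden by Lemma~\ref{lem:bipartite}) therefore gives $d_R(v)\le 6$. Using $\max x=1$ and the eigen-equation,
\[
\rho x_v=x_{u'}+x_{u''}+\sum_{w\in N_G(v)\setminus\{u',u''\}}x_w\le 2+\sum_{w\in N_R(v)}x_w+\sum_{w\in N_B(v)}x_w .
\]
The first two terms contribute at most $8$. The $B$-neighbours need care, because $d_B(v)$ is not a priori bounded. I would restrict the choice of $r$ (and hence of the crossing edges) to vertices with $d_B\le C$ for a constant $C$. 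This is possible because $e(G[R,B])\le 4n$ by Lemma~\ref{lem:edge}, while $|B|\le(1-\alpha)n$ and $|R|\ge\alpha n$. With this restriction, $x_a,x_b\le 80/\rho$ for each deleted edge $ab$, with room to spare. Finally,
\[
X^T\bigl(A(G^*)-A(G^*-D')\bigr)X=2\sum_{ab\in D'}x_ax_b\le 6\cdot\frac{6400}{\rho^2}=\frac{38400}{\rho^2}.
\]
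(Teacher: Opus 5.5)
Your outer structure matches the paper's. Case (b) is exactly its Case 1, and otherwise you reroute the paths $u'bu''$ near some $r\in R$ and delete at most three edges of $G[R]$. However, the two facts that carry the lemma are not established, and Step~3 as written is invalid.

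\textbf{Missing structural step.} In case (c) the edges $u'r,u''r$ ($r\in R$) form a \emph{plane} copy of $K_{2,s}$. Adjacent edges never cross in a good drawing, and a crossing of $u'r$ with $u''r'$ is excluded by the case assumption. So its faces are the quadrilaterals $Q_i$ bounded by $u'r_iu''r_{i+1}$, in the cyclic order at $u'$. An edge of $G[R]$ is crossed at most once, so it meets at most two adjacent faces. Hence the only edges of $G[R]$ that can enter $Q_i$ are $r_{i-1}r_{i+1}$, $r_ir_{i+1}$ and $r_ir_{i+2}$. This gives $|D'|\le 3$ for \emph{every} face, with no rotation counting or ``surplus of $\alpha$'' needed. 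Your accounting of one edge per straight part plus one at the turn is also not what happens. The turn at $r_i$ into $Q_i$ can meet two edges, $r_ir_{i+1}$ and $r_ir_{i+2}$. Any edge crossing $u'r_i$ or $u''r_i$ from that side is the single edge $r_{i-1}r_{i+1}$.

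\textbf{Gap in Step~3.} You bound only the turning vertex $r$. Restricting $d_B(r)\le C$ says nothing about the other endpoints of the deleted edges: the far endpoint $a$ of an edge $ra$ at the turn, or both endpoints of an edge crossing $u'r$ or $u''r$. Nothing in the hypotheses bounds $d_B(a)$ for such $a$, so $x_a$ may be of constant order. Then $x_rx_a$ is only $O(1/\rho)$, not $O(1/\rho^2)$. Without the face analysis you do not even know which vertices these are, so ``restricting the choice of $r$ and hence of the crossing edges'' has nothing to act on.

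\textbf{How to close it.} With the face analysis, all endpoints lie in the window $\{r_{i-1},r_i,r_{i+1},r_{i+2}\}$, and you can argue as the paper does:
\begin{itemize}
\item Call $v$ heavy if $x_v>80/\rho$. Since $\rho x_v\le d_G(v)$ and $e(G)\le 4n$, there are fewer than $n/10$ heavy vertices.
\item Each vertex lies in four cyclic windows, so fewer than $2n/5$ of the $s\ge\alpha n$ windows contain a heavy vertex. Pick a window in which all four vertices are light.
\item The paper takes two such windows so it can avoid the one face that may contain $u'u''$.
\end{itemize}
Your $d_B$-based bound $x_v\le(8+d_B(v))/\rho$ would also work here, but only if you impose it on all four vertices of the window simultaneously.
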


\noindent \emph{Proof.}
If $B=\varnothing$, then $D'=\varnothing$
and the statement holds trivally. Suppose that
$B\neq\varnothing$, and fix a good $1$-planar drawing of
$G[R\cup\{u',u''\}]$. 

\Needspace{5\baselineskip}
\begin{case}
There exist vertices $v_1,v_2\in V(G)\setminus\{u',u''\}$ such that edge $v_1u'$ crosses edge $v_2u''$ .
\end{case}

Let $w$ denote the cross point of $v_1u'$ and $v_2u''$ . Then $u'w$ and $u''w$ do not cross any other edges. Draw a curve connecting $u'$ and $u''$ closely along $u'wu''$  such that the region bounded by $u'u''$ and $u'wu''$ contains no vertices or edges. Then we embed $u'bu''$ without crossings inside this region for all $b\in B$.  Since no new crossing is created, hence $D'=\varnothing$
satisfies the conclusion.

\Needspace{5\baselineskip}
\begin{case}
For all $v_1,v_2\in V(G)\setminus\{u',u''\}$, edges $v_1u'$ and $v_2u''$ never cross each other.
\end{case}
Observe that the edges $\{u'r,u''r| r\in R\}$ form a plane drawing of  $K_{2,s}$ with $s=|R| \ge \lceil 20\alpha \rceil =20.$ Label $R=\{r_1,r_2, \dots , r_s\}$ and $r_{i+s}=r_i$ in counterclockwise order around $u'$, this yields a planar drawing of $K_{2,s}$. By Euler's formula, $K_{2,s}$ has exactly $s$ faces $Q_1,\dots,Q_s$, where $\partial(Q_i) = u'r_iu''r_{i+1}u'$.  If  $u'u'' \in E(G)$, since edge $u'u''$ is incident to every edge in $K_{2,s}$, the edge \(u'u''\) crosses no edge of this plane \(K_{2,s}\). Thus, $u'u''$ is contained in at most one quadrilateral region. Moreover, at least \(s-1\) of these faces avoid the interior of \(u'u''\). Hence there exists $ Q_i$ such that $Q_i$ does not contain $u'u''$. Note that $r_i$ is incident only with $Q_{i-1}$ and $Q_i$, and an edge
of $G[R]$ can cross at most one boundary edge of the plane $K_{2,s}$. Thus any edge in $E(R)$ crossing the quadrilateral region $Q_i$ must be incident to $r_i$ or $r_{i+1}$, otherwise, it crosses at least two boundary edges of $Q_i$. Therefore, only edges among $\{r_{i-1}r_{i+1}, r_ir_{i+1}, r_ir_{i+2}\}$ can cross the quadrilateral region $Q_i$. Let $L_0=\{v\in R:x_v> \frac{80}{\rho}\}$. If $L_0=\varnothing$, then $|L_0|=0 < \frac{n}{10}$. If $L_0\neq\varnothing$, by
Lemma~\ref{lem:edge} 
\[
80|L_0|<\sum_{v\in L_0}\rho x_v\leq2e(G)<8n.
\]
Consequently, $|L_0|< \frac{n}{10}$.
An ordering $4$-tuple $(r_{i-1},r_i,r_{i+1},r_{i+2})$ is called good 
if $\max \{x_{r_{i-1}},x_{r_i},$ $x_{r_{i+1}},x_{r_{i+2}}\} \le \frac{80}{\rho}$ and bad otherwise. Each vertex of $L_0$ belongs to four cyclic ordered $4$-tuples.
Hence the number of bad $4$-tuples is at most $4|L_0|$,  more than
$s-\frac{2n}{5}\geq\left(\alpha-\frac25\right)n>2$
good ordering $4$-tuples. Choose a good ordering $4$-tuples such that the quadrilateral region $Q_j$ bounded by $u'r_ju''r_{j+1}u'$ contains no $u'u''$. Let $D'$ be the set of edges that actually exist in $\{r_{j-1}r_{j+1}, r_jr_{j+1}, r_jr_{j+2}\}$. Then $|D'|\leq3$,  deleting $D'$ adding  paths $u'bu''$ inside $Q_j$ without crossings, where $b\in B$. This yields a $1$-planar drawing of $G^*-D'$, and
\[
X^T\bigl(A(G^*)-A(G^*-D')\bigr)X
\le 2\sum_{uv\in D'}x_{u}x_{v}
\leq2\cdot3\left(\frac{80}{\rho}\right)^2
=\frac{38400}{\rho^2}.
\]
This completes the proof of Lemma \ref{lem:repair}.

\noindent \emph{\textbf{Proof of Theorem \ref{thm1}}.}
Let $G\in\operatorname{SPEX}_{\mathcal{P}_1}(n,F_t)$ and 
$\rho=\rho(G)$, where $n \ge \max \{1.23 \times 10^{18}, 2t+1,  
\frac{2t}{\alpha},  $ $\frac{f(t)^2-f(t)+4}{2},$ $ \frac{g(t)^2-g(t)+4}{2}, 7t-12 \}$. Let $u'\in V(G)$ such that $x_{u'}=1$, $\pi=\{V_1,V_2,\dots,V_k\}$ be a partition of $V(G)$. The partition $\pi$ is called an equitable partition if every vertex in $V_i$ has the same number of neighbors in $V_j$ for all $i,j$. The quotient matrix $B=(b_{ij})$ associated with $\pi$ is a $k\times k$ matrix, where the $b_{ij}$ is the number of neighbors in $V_j$ of any vertex in $V_i$. The following claims hold.

\setcounter{claim}{0}
\begin{claim}\label{cl:connected}
$G$ is connected, $\rho\ge \sqrt{2n-4}$, and $x_v\ge \frac{1}{\rho}$ for every  $v\in V(G)$.
\end{claim}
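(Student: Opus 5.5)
The plan is to establish the three assertions in the order lower bound, positivity, connectivity, since each feeds the next.

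\emph{Lower bound on $\rho$.} We use $K_{2,n-2}$ as a comparison graph. It is planar, hence $1$-planar. It is bipartite, hence triangle-free and so $F_t$-free for every $t\ge1$. Its spectral radius is $\sqrt{2(n-2)}=\sqrt{2n-4}$. By maximality of $G$, $\rho\ge\sqrt{2n-4}$. For $t\ge2$ one could also use $K_2\vee(n-2)K_1$, which is $F_t$-free by Lemma~\ref{lem:friendship} with $m=0$, but this is not needed.

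\emph{Lower bound on $x_v$, assuming connectivity.} Take $u'$ with $x_{u'}=1$. Then
\[
\rho^2 = \rho^2 x_{u'}=\sum_{w\sim u'}\sum_{z\sim w}x_z\le \sum_{w\sim u'}d(w),
\]
which only gives the crude estimate $d(u')$ large. The cleaner route is a perturbation argument for an arbitrary vertex $v$. Delete all edges at $v$ and join $v$ to $u'$ only, producing $G'$. This keeps $1$-planarity: place $v$ in a face next to $u'$ in a drawing of $G-v$. It keeps $F_t$-freeness, since a pendant vertex lies in no triangle. By Lemma~\ref{lem:rayleigh},
\[
\rho(G')-\rho\ge \frac{2x_v\bigl(x_{u'}-\sum_{w\sim v}x_w\bigr)}{X^TX}=\frac{2x_v(1-\rho x_v)}{X^TX},
\]
with the edge $vu'$ counted appropriately if it was already present. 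If $x_v<1/\rho$, this is positive whenever $x_v>0$, contradicting maximality. So either $x_v=0$ or $x_v\ge 1/\rho$.

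\emph{Connectivity.} Suppose $G$ is disconnected, and let $G_1$ be a component with $\rho(G_1)=\rho$ carrying the Perron vector, so that $x\equiv0$ off $G_1$. Pick a vertex $v$ in another component $G_2$. Join $v$ to $u'\in V(G_1)$ by a single edge, drawing the components in disjoint regions so the new edge is crossing-free. A bridge creates no triangle, so the result is still $F_t$-free and $1$-planar. The spectral radius strictly increases, because $G_1$ plus a pendant edge properly contains the connected graph $G_1$. This contradicts maximality, so $G$ is connected. Then $X>0$ by Perron--Frobenius, and the previous step gives $x_v\ge1/\rho$ for all $v$.

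\emph{Main obstacle.} The delicate step is verifying that the rewiring preserves $1$-planarity and $F_t$-freeness, and handling the case $vu'\in E(G)$ in the Rayleigh computation. For the latter, one checks that the numerator becomes $2x_v\bigl(x_{u'}-\sum_{w\sim v}x_w\bigr)$ with $x_{u'}$ included in the sum, and the same sign argument works. If $v=u'$ the bound is trivial, since $x_{u'}=1\ge 1/\rho$.
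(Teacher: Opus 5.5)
Your proposal is correct and follows essentially the same route as the paper. It uses $K_{2,n-2}$ as the comparison graph for $\rho\ge\sqrt{2n-4}$, and a crossing-free bridge between two components (which lies in no triangle) to force connectivity. It then rewires a vertex $v$ with $x_v<1/\rho$ into a pendant neighbour of $u'$ and applies Lemma~\ref{lem:rayleigh}. The differences are cosmetic: you run the rewiring step before connectivity, obtaining $x_v=0$ or $x_v\ge 1/\rho$ and then invoking $X>0$. You also handle the case $vu'\in E(G)$ through the formula, whereas the paper simply notes that $x_v<1/\rho$ already forces $vu'\notin E(G)$, since otherwise $\rho x_v\ge x_{u'}=1$.
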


\noindent \emph{Proof.}
Since $K_{2,n-2}$ is a $C_3$-free $1$-planar graph, $\rho\ge \rho(K_{2,n-2})=\sqrt{2n-4}$. Suppose for contradiction that $G$ is disconnected. Let $G_1,G_2$ be two connected components of $G$ such that $\rho=\rho(G_1)$. Take $u\in V(G_1),v\in V(G_2)$ such that $u,v$ lie on the boundaries of $G_1,G_2$, respectively. Then adding the edge $uv$ creates no new crossings. Construct a new graph $G^*=G +uv$. Since $uv$ is a cut edge, $uv$ does not belong to any triangle. So $G^*$ is an $F_t$-free $1$-planar graph. However,  $\rho(G^*)>\rho$, a contradiction. Therefore, $G$ is connected.
 Assume there exists $z \in V(G)$ with $x_z< \frac{1}{\rho}$. We get $\rho x_z=\sum_{w\in N_G(z)}x_w<1$, so $u'z\notin E(G)$.
Construct $G'$ from $G$ by deleting all edges incident to $z$ and adding the edge $u'z$. Note that $G'$ has a $1$-planar drawing by appropriately adjusting the position of the leaf vertex $z$. Hence $G'$ is still an $F_t$-free $1$-planar graph. And
\[
X^T\big(A(G')-A(G)\big)X
\ge2x_z\Big(1-\sum_{w\in N_G(z)}x_w\Big)
=2x_z(1-\rho x_z)>0.
\]
Thus, $\rho(G')>\rho$, a contradiction. Hence $x_z\ge \frac{1}{\rho}$ for every $z\in V(G)$.

\begin{claim}\label{cl:lower}
If $t\ge 2$, then $\rho \ge \frac{1+\sqrt{8n-15}}{2}$.
\end{claim}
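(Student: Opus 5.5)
The plan is to exhibit an explicit $F_t$-free $1$-planar graph on $n$ vertices whose spectral radius is exactly $\frac{1+\sqrt{8n-15}}{2}$. Then $\rho=\operatorname{spex}_{\mathcal{P}_1}(n,F_t)$ is at least this value.

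\emph{Choice of test graph.} I take $G_0=K_2\vee (n-2)K_1$, i.e.\ the case $a=0$, $b=n-2$ of Remark~\ref{rk222}. That remark already certifies that $G_0$ is $1$-planar. Its induced subgraph $H=(n-2)K_1$ has $m(H)=0\le t-2$ for every $t\ge2$. Moreover $|V(H)|=n-2\ge 2t-1$, since $n\ge 2t+1$ is among the hypotheses. So Lemma~\ref{lem:friendship} applies and gives that $G_0$ is $F_t$-free. This is the only place where $t\ge2$ is used: for $t=1$ the graph $G_0$ contains triangles.

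\emph{Computing $\rho(G_0)$.} I will use the partition $\pi=\{\{u',u''\},V(H)\}$. It is equitable. Each of $u',u''$ has $1$ neighbour in $\{u',u''\}$ and $n-2$ neighbours in $V(H)$. Each vertex of $H$ has $2$ neighbours in $\{u',u''\}$ and none in $V(H)$. The quotient matrix is therefore
\[
B=\begin{pmatrix}1 & n-2\\ 2 & 0\end{pmatrix},
\]
with characteristic polynomial $\lambda^2-\lambda-2(n-2)=0$. Its largest root is $\frac{1+\sqrt{1+8(n-2)}}{2}=\frac{1+\sqrt{8n-15}}{2}$.

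By the standard fact for equitable partitions, the largest eigenvalue of $B$ equals $\rho(G_0)$. Concretely, the vector that is constant $y_1$ on $\{u',u''\}$ and constant $y_2$ on $V(H)$, built from a positive eigenvector $(y_1,y_2)$ of $B$, is a positive eigenvector of $A(G_0)$. Since $G_0$ is connected, Perron--Frobenius then identifies its eigenvalue with $\rho(G_0)$.

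\emph{Conclusion.} By the maximality of $G\in\operatorname{SPEX}_{\mathcal{P}_1}(n,F_t)$ we get $\rho\ge\rho(G_0)=\frac{1+\sqrt{8n-15}}{2}$. There is no real obstacle in this argument. The only points needing care are checking the hypothesis $|V(H)|\ge 2t-1$ of Lemma~\ref{lem:friendship} and justifying that the quotient eigenvalue is the spectral radius.
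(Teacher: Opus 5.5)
Your proposal is correct and takes essentially the same approach as the paper: you use the test graph $K_2\vee(n-2)K_1$ and compute its spectral radius from the quotient matrix $\begin{pmatrix}1 & n-2\\ 2 & 0\end{pmatrix}$ of the equitable partition $\{\{u',u''\},V(H)\}$. Your added justifications, namely $F_t$-freeness via Lemma~\ref{lem:friendship} using $n\ge 2t+1$ and the Perron--Frobenius lifting argument, fill in details the paper leaves implicit.
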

\noindent \emph{Proof.}
Note that $K_2\vee (n-2)K_1$ is an $F_t$-free $1$-planar graph for $t\ge2$. We partition $V(K_2\vee (n-2)K_1)$ as $V(K_2)\cup V((n-2)K_1)$. Then the quotient matrix of $K_2\vee (n-2)K_1$ is $
	\begin{pmatrix}
		1 & n-2 \\
		2 & 0
	\end{pmatrix}.$
Therefore, $\rho \ge \frac{1+\sqrt{8n-15}}{2}$.

\vspace*{2mm}
Let $\eta=\frac{1}{14000}$ ,
$q_0=\sqrt{1-\frac{\eta}{2}}=\sqrt{\frac{27999}{28000}}$ and
$ N_c=\left\lceil\frac{32}{q_0^2\eta^4}\right\rceil
 < 1.23 \times 10^{18}$. Define 
 $L=\{v\in V(G):x_v>\eta\}$, $S=V(G)\setminus L$. From $n\ge N_c$, Lemma~\ref{lem:spectral} and Claim~\ref{cl:connected} yield $\rho^2\ge (2-\eta)n$, $q_0\sqrt{2n}\le \rho<\sqrt{2.01\,n}.$

\begin{claim}\label{cl:partition}

$|L|<\eta n$, $e(L)<4\eta n$, $e(S)\le 4n$, and
$e(L,S)\le(2+4\eta)n$.
\end{claim}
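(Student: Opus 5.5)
We prove the four estimates in order, using only the eigen-equation, the edge bound of Lemma~\ref{lem:edge}, and the bounds $q_0\sqrt{2n}\le\rho<\sqrt{2.01\,n}$ and $x_v\le 1$.

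\textbf{Bound on $|L|$.} For any $v$ the eigen-equation and $x_w\le1$ give $\rho x_v=\sum_{w\in N_G(v)}x_w\le d_G(v)$. Summing over $L$ and using Lemma~\ref{lem:edge},
\[
\eta\rho|L|<\rho\sum_{v\in L}x_v\le\sum_{v\in L}d_G(v)\le 2e(G)<8n,
\]
so $|L|<\frac{8n}{\eta\rho}$. This first step only shows $|L|=O(\sqrt n)$. Since $\rho\ge q_0\sqrt{2n}$ and $n\ge N_c$, we have $\frac{8}{\eta q_0\sqrt{2n}}<\eta$, i.e.\ $n>\frac{32}{q_0^2\eta^4}$, which is exactly how $N_c$ was defined. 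Hence $|L|<\eta n$, though with a lot of room to spare.

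\textbf{Bounds on $e(L)$ and $e(S)$.} For $e(L)$, apply Lemma~\ref{lem:edge} to $G[L]$: if $|L|\ge3$ then $e(L)\le 4|L|-8<4\eta n$, and the cases $|L|\le2$ are trivial. For $e(S)$, note $|S|\le n$ and $G[S]$ is $1$-planar, so $e(S)\le 4|S|-8<4n$, with the small cases again trivial.

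\textbf{Bound on $e(L,S)$.} This is the main step. The crude estimate $e(L,S)\le e(G)\le 4n$ is too weak, so we need the spectral information. The idea is to double count via the second power of the eigen-equation:
\[
\rho^2 x_v=\sum_{w\sim v}\sum_{y\sim w}x_y .
\]
Summing over $v\in L$ and bounding $x_y\le1$ gives
\[
\rho^2\sum_{v\in L}x_v\le\sum_{v\in L}\sum_{w\sim v}d_G(w).
\]
Since each $x_v\le1$, this only relates degrees, and it does not isolate $e(L,S)$ directly.

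The alternative I would try first is to bound $\sum_{v\in L}x_v$ from below. Taking $v=u'$, we have $\rho^2=\rho^2x_{u'}\le\sum_{w\sim u'}\rho x_w$. Splitting by $w\in L$ versus $w\in S$ gives
\[
\rho^2\le\sum_{w\in L}d(w)+\sum_{w\in S,\,w\sim u'}\sum_{y\sim w}x_y .
\]
In the last sum, $y$ ranges over $L$ (contributing at most $e(L,S)$) or over $S$ (each $x_y\le\eta$, contributing at most $2e(S)\eta\le 8\eta n$).

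I expect the cleaner intended route is simpler: $e(L,S)\le\sum_{v\in L}d_S(v)$, and $d_S(v)\le d(v)$, but the number of such edges is also bounded by $e(G)-e(S)-e(L)$, which needs a lower bound on $e(S)$ that we do not have. So I would instead use the following approach. Write $e(L,S)=e(G[L\cup S'])$-type counts, where $S'=N(L)\cap S$. The bipartite graph $G[L,S]$ is $1$-planar with part sizes $|L|<\eta n$ and $|S|$. By Lemma~\ref{lem:bipartite} with $s=|L|\ge2$,
\[
e(L,S)\le 2(|L|+|S|)+4|L|-12\le 2n+4\eta n,
\]
which is precisely the claimed bound. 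If $|L|\le1$, then $e(L,S)\le n-1$ trivially.

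So the whole claim reduces to Lemma~\ref{lem:edge}, Lemma~\ref{lem:bipartite}, and the degree-sum bound. The only delicate point is the numeric check $n\ge N_c$ in the first step; the earlier detour through the eigen-equation for $e(L,S)$ is not needed.
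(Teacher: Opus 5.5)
Your final argument is correct and is essentially the paper's proof: you bound $|L|$ via $\eta\rho|L|<\sum_{v\in L}d_G(v)\le 2e(G)<8n$ together with $\rho\ge q_0\sqrt{2n}$ and $n\ge N_c$, bound $e(L)$ and $e(S)$ by applying Lemma~\ref{lem:edge} to $G[L]$ and $G[S]$, and bound $e(L,S)$ by applying Lemma~\ref{lem:bipartite} to $G[L,S]$, which is valid since $2\le|L|<\eta n<|S|$. The exploratory detour through $\rho^2x_v$ and the stray $e(G[L\cup S'])$ remark are not used and should be deleted, and there is not ``a lot of room to spare'' in the first step, because $n\ge N_c$ is exactly the binding condition that makes $|L|<\eta n$ hold.
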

\noindent \emph{Proof.}
Since $\rho x_v\le d_G(v)$ for any $v\in L$, $d_G(v)>\eta q_0\sqrt{2n}$. Thus,
\[
\eta q_0\sqrt{2n}\cdot |L| < \sum_{v\in L}d_G(v)\le 2e(G)\le 8n,
\]
which implies $|L|<\frac{4\sqrt{2n}}{\eta q_0}$. Combined with $n\ge \frac{32}{q_0^2\eta^4}$, this simplifies to $|L|<\eta n$.
Note that both $G[L]$ and $G[S]$ are $1$-planar graphs. By Lemma~\ref{lem:edge}, $e(L)\le 4|L| \le 4\eta n$ and $e(S)\le 4|S|\le 4n$.
If $|L|\ge 2$ and $|S|\ge 2$, by Lemma~\ref{lem:bipartite}, then
$e(L,S)\le 2n+4|L|-12\le (2+4\eta)n.$
If $|L|\le 1$ or $|S| \le 1$, then trivially $e(L,S)\le n-1$, and the same upper bound holds.

\begin{claim}\label{cl:nonneighbors}
For every $u\in L$, $
\big|S\setminus N_G(u)\big|
\le2.01\left(\frac{1+11\eta+2\eta^2}{q_0}-q_0x_u\right)n.
$
In particular, $d_S(u')\ge(1-\xi_1)n$, where $\xi_1=\eta+2.01\left(\frac{1+11\eta+2\eta^2}{q_0}-q_0\right).$
\end{claim}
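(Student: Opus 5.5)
Fix $u\in L$. The plan is to count $\rho^2x_u$ through walks of length two and bound it above, using that $S$-vertices have small coordinates. From the eigen-equation twice,
\[
\rho^2x_u=\sum_{v\in N(u)}\sum_{w\in N(v)}x_w=\sum_{w\in V(G)} d_{N(u)}(w)\,x_w .
\]
First split the inner sum according to whether the middle vertex $v$ lies in $L$ or in $S$, and whether the endpoint $w$ lies in $L$ or in $S$. Each term with $v\in L$ or $w\in L$ is bounded by $1$, and each term with $v,w\in S$ is bounded by $\eta$. By Claim~\ref{cl:partition}, walks $u\to v\to w$ with $v\in L$ number at most $\sum_{v\in L}d(v)\le 2e(L)+e(L,S)\le (2+12\eta)n$. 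Walks with $v\in S$ and $w\in L$ number at most $e(L,S)\le (2+4\eta)n$, but this is too crude, since it would double-count the dominant $K_{2,n}$-type contribution.

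A sharper version keeps the dependence on $d_S(u)$. For $v\in S\cap N(u)$, write $\rho x_v=\sum_{w\in N(v)}x_w\le d_L(v)+\eta d_S(v)$. Then
\[
\rho^2x_u\le \rho\sum_{v\in N_L(u)}x_v+\sum_{v\in N_S(u)}\bigl(d_L(v)+\eta d_S(v)\bigr).
\]
The first term is at most $\rho|L|<\rho\eta n$. For the second term, $\sum_{v\in N_S(u)}d_L(v)\le e(L,S)\le(2+4\eta)n$ and $\eta\sum_{v\in S} d_S(v)\le 2\eta e(S)\le 8\eta n$. This gives $\rho^2x_u\le (2+12\eta)n+\rho\eta n$, with no $d_S(u)$ in it yet.

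To bring in the non-neighbours, observe that each $w\in S\setminus N(u)$ still has $x_w\ge 1/\rho$ and lies in $S$. The intended mechanism is then as follows. Since $e(L,S)\le (2+4\eta)n$ and there are only about $2n$ $L$–$S$ edges available, the vertices of $S\setminus N(u)$ "waste" $L$–$S$ edges on $L\setminus\{u\}$ that do not contribute to walks from $u$. Concretely, I would refine the bound to
\[
\sum_{v\in N_S(u)}d_L(v)\le e(L,S)-\sum_{w\in S\setminus N(u)}d_L(w),
\]
and use the lower bound $\rho x_w\ge$ something $\approx\rho^2$-scale. Each $S$-vertex needs roughly two $L$-neighbours for its coordinate to be non-negligible, though here it is cleaner to use only $\sum_{w\in S\setminus N(u)} d_L(w)\ge |S\setminus N(u)|$-type counts together with the slack in $e(L,S)$.

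Combining this with $\rho^2\ge q_0^2\cdot 2n$ and $\rho<\sqrt{2.01n}$, I rearrange to get $|S\setminus N(u)|\le \frac{\rho}{\,\cdot\,}\bigl((2+11\eta+\dots)n/\rho - \rho x_u\bigr)$. Dividing by $\rho\ge q_0\sqrt{2n}$ and multiplying by $\rho<\sqrt{2.01n}$ produces exactly the shape $2.01\bigl(\frac{1+11\eta+2\eta^2}{q_0}-q_0x_u\bigr)n$. Then $d_S(u')=|S|-|S\setminus N(u')|\ge (1-\eta)n-|S\setminus N(u')|$ with $x_{u'}=1$ gives $\xi_1$.

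The main obstacle is the constant bookkeeping. I need the loss from non-neighbours to enter linearly with coefficient about $1/\rho$ per missing vertex, so that dividing by $\rho$ yields the $2.01$ factor. I would first try to bound the relevant term as $\rho x_u\cdot\rho\le \rho^2x_u$ against $(2+22\eta+4\eta^2)n/2\cdot 2$, treating $\eta\rho n$ via $\rho\le\sqrt{2.01n}$ absorbed as $\eta^2$-order terms for large $n$, and then adjust the grouping until the stated constants emerge.
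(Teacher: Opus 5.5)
There is a genuine gap. Your overall frame, expanding $\rho^2x_u=\sum_{v\in N(u)}\rho x_v$, is sound; it is the paper's argument multiplied through by $\rho$. But two steps fail.

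\textbf{The $L$-part.} The bound $\rho\sum_{v\in N_L(u)}x_v\le\rho|L|<\rho\eta n$ is of order $\eta n^{3/2}$, while every other term is of order $n$. So $\rho^2x_u\le(2+12\eta)n+\rho\eta n$ carries no information. Nor can $\rho\eta n$ be absorbed as an $\eta^2$-order term for large $n$; it dominates. What is missing is a second application of the eigen-equation at the vertices of $N_L(u)$, with walks that continue into $S$ weighted by $\eta$ rather than by $1$:
\[
\rho\sum_{v\in N_L(u)}x_v=\sum_{v\in N_L(u)}\sum_{w\in N_G(v)}x_w\le\sum_{v\in N_L(u)}\bigl(d_L(v)+\eta\,d_S(v)\bigr)\le 2e(L)+\eta\,e(L,S)\le(10\eta+4\eta^2)n .
\]
This is the paper's estimate $\sum_{v\in N_G(u)\cap L}x_v\le\frac{(5\eta+2\eta^2)\sqrt{2n}}{q_0}$ in unnormalized form.

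\textbf{The non-neighbour step.} This step is never carried out. The claim has explicit constants, and they feed through $\xi_1$, $\beta$ and $\xi_2$ into the later requirement $\xi<1-\alpha$, which holds with a margin of only about $4\cdot10^{-6}$. So ``adjust the grouping until the stated constants emerge'' is not a proof.

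The per-vertex count $\sum_{w\in S\setminus N(u)}d_L(w)\ge|S\setminus N(u)|$ is unjustified, because a vertex of $S$ need not have any neighbour in $L$. You could instead sum $1\le\rho x_w\le d_L(w)+\eta\,d_S(w)$ over $S\setminus N(u)$. But if you then bound those $\eta d_S$-terms by a second $2\eta e(S)$, as you already do over $N_S(u)$, you get only about $31\eta n$ at $x_u=1$. That exceeds the claimed $\frac{2.01(11.5\eta+2\eta^2)}{q_0}n\approx 23.1\eta n$.

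The clean fix is to charge each non-neighbour through its own coordinate, keeping $\rho x_w$ unexpanded. Since $x_w\ge 1/\rho$ for every vertex,
\[
\rho\sum_{v\in N_S(u)}x_v=\rho\sum_{v\in S}x_v-\rho\sum_{v\in S\setminus N_G(u)}x_v\le e(L,S)+2\eta\,e(S)-\bigl|S\setminus N_G(u)\bigr| .
\]
Combining this with the first display and $\rho^2x_u=\rho\sum_{v\in N_L(u)}x_v+\rho\sum_{v\in N_S(u)}x_v$ gives
\[
|S\setminus N_G(u)|\le(2+22\eta+4\eta^2)n-\rho^2x_u\le\bigl(2+22\eta+4\eta^2-(2-\eta)x_u\bigr)n .
\]
Both this and the claimed bound are affine in $x_u$. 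Checking $x_u=0$ and $x_u=1$ therefore shows the above is at most $2.01\bigl(\frac{1+11\eta+2\eta^2}{q_0}-q_0x_u\bigr)n$.

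In the paper's normalization the same argument reads as follows. It bounds $\sum_{v\in S}x_v\le\frac{1+6\eta}{q_0}\sqrt{2n}$ from above and bounds $\sum_{v\in N_G(u)\cap S}x_v$ from below via $\rho x_u$. It then converts the difference into a count using $x_v\ge 1/\rho>1/\sqrt{2.01n}$, which is where the factors $2.01$ and $1/q_0$ come from. Your final step, from $|S|\ge(1-\eta)n$ to $d_S(u')\ge(1-\xi_1)n$, is fine.
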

\noindent \emph{Proof.}
From the eigenvector equation
\[
\sum_{v\in N_G(u)\cap S}x_v
=\rho x_u-\sum_{v\in N_G(u)\cap L}x_v.
\]
By $\rho\ge q_0\sqrt{2n}$ and  Claim~\ref{cl:partition}, we have
\[
\begin{aligned}
\sum_{v\in N_G(u)\cap L}x_v
&\le \frac{1}{\sqrt{2n}{q_0}} \left( \sum_{v\in N_G(u)\cap L}\sum_{w\in N_G(v)\cap L}x_w + \sum_{v\in N_G(u)\cap L}\sum_{w\in N_G(v)\cap S}x_w \right)\\
&\le \frac{1}{\sqrt{2n}{q_0}}\left(2e(L)+\eta e(L,S)\right)\le \frac{(5\eta+2\eta^2)\sqrt{2n}}{q_0}.
\end{aligned}
\]
Thus,
\[
\sum_{v\in N_G(u)\cap S}x_v
\ge \Big(q_0 x_u-\frac{5\eta+2\eta^2}{q_0}\Big)\sqrt{2n}.
\]
On the other hand,
\[
\begin{aligned}
	\rho\sum_{v\in S} x_v &= \sum_{w\in L} d_S(w)x_w + \sum_{w\in S} d_S(w)x_w \\
	&\le e(L,S) + 2\eta e(S).
\end{aligned}
\]
Hence,
\[
\sum_{v\in S}x_v
\le \frac{e(L,S)+2\eta e(S)}{q_0\sqrt{2n}}
\le \frac{1+6\eta}{q_0}\sqrt{2n}.
\]
Recall from Claim \ref{cl:connected} that $x_v\ge \frac{1}{\rho}> \frac{1}{\sqrt{2.01n}}$ for each $v\in S$, so
\[
\big|S\setminus N_G(u)\big|\cdot \frac{1}{\sqrt{2.01n}}
\le \sum_{v\in S\setminus N_G(u)}x_v
\le \left(\frac{1+11\eta+2\eta^2}{q_0}-q_0 x_u\right)\sqrt{2n}.
\]
Thus, substitute $u=u',x_{u'}=1$, by $|S| \ge (1-\eta)n$, we have
\[d_S(u')=|S|-\big|S\setminus N_G(u')\big| \ge (1-\eta)n-2.01\left(\frac{1+11\eta+2\eta^2}{q_0}-q_0 x_{u'}\right)n=(1-\xi_1)n.\]

Define $\beta=\frac{1-19\eta-4\eta^2}{1+4\eta+\xi_1}\le1 $ and
$\xi=\xi_1+\xi_2$, where $\xi_2=\eta+2.01\left(\frac{1+11\eta+2\eta^2}{q_0}
-q_0\beta\right).$ Note $u' \in L$, $L \neq \varnothing. $

\begin{claim}\label{cl:second}
There must exist a vertex $u''\in L\setminus\{u'\}$ such that $x_{u''}\ge \beta>\frac{4983}{5000}$ and
$\big|N_G(u')\cap N_G(u'')\big|\ge (1-\xi)n.$\end{claim}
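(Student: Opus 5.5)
The plan is to count walks of length two from $u'$, show that almost all of that count is carried by the $S$-neighbours of $u'$ reaching a single further vertex of $L$, and then use Claim~\ref{cl:nonneighbors} to turn a large eigenvector entry into a large common neighbourhood. Since $x_{u'}=1$, the eigenvector equation gives
\[
\rho^2=\rho^2x_{u'}=\sum_{v\in N_G(u')}\sum_{w\in N_G(v)}x_w=\sum_{w\in V(G)}d_{N_G(u')}(w)\,x_w .
\]
The left side is at least $(2-\eta)n$, because $\rho^2\ge(2-\eta)n$ was noted after Claim~\ref{cl:lower}. I will bound the right side from above, sorting the walks $u'vw$ by the type of the second edge $vw$:
\begin{itemize}
\item[(a)] The term $w=u'$ contributes $d_G(u')\le n$.
\item[(b)] Walks whose second edge lies inside $L$ contribute at most $2e(L)<8\eta n$, using $x_w\le 1$ and Claim~\ref{cl:partition}.
\item[(c)] Walks ending at some $w\in S$ have $x_w\le\eta$. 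Each such second edge lies in $E(S)$ or $E(L,S)$ and is traversed at most twice. So these contribute at most $\eta\bigl(2e(S)+e(L,S)\bigr)\le\eta(10+4\eta)n$, again by Claim~\ref{cl:partition}.
\item[(d)] The remaining walks have $v\in S$ and $w\in L\setminus\{u'\}$. They contribute $\sum_{w\in L\setminus\{u'\}}d_{N_G(u')\cap S}(w)\,x_w$.
\end{itemize}

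Subtracting (a)--(c) from $(2-\eta)n$, the sum in (d) is at least $(1-19\eta-4\eta^2)n$, up to the exact bookkeeping of constants in (b) and (c). For the matching upper bound on the total degree, I will use
\[
\sum_{w\in L\setminus\{u'\}}d_S(w)=e(L,S)-d_S(u')\le(2+4\eta)n-(1-\xi_1)n=(1+4\eta+\xi_1)n .
\]
This combines Claim~\ref{cl:partition} with the bound $d_S(u')\ge(1-\xi_1)n$ from Claim~\ref{cl:nonneighbors}. Now take $u''\in L\setminus\{u'\}$ maximizing $x_w$; the sum in (d) is positive, so this set is nonempty. Then
\[
x_{u''}(1+4\eta+\xi_1)n\ \ge\ \sum_{w\in L\setminus\{u'\}}d_{N_G(u')\cap S}(w)\,x_w\ \ge\ (1-19\eta-4\eta^2)n,
\]
so $x_{u''}\ge\beta$. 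The numerical inequality $\beta>\frac{4983}{5000}$ is then a direct evaluation with $\eta=\frac1{14000}$ and $q_0^2=\frac{27999}{28000}$.

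For the common neighbourhood, apply Claim~\ref{cl:nonneighbors} to $u''$ with $x_{u''}\ge\beta$. This gives $|S\setminus N_G(u'')|\le 2.01\bigl(\frac{1+11\eta+2\eta^2}{q_0}-q_0\beta\bigr)n=(\xi_2-\eta)n$. By the proof of that claim, $|S\setminus N_G(u')|\le(\xi_1-\eta)n$. Since $|S|\ge(1-\eta)n$, we get
\[
|N_G(u')\cap N_G(u'')|\ \ge\ |S|-|S\setminus N_G(u')|-|S\setminus N_G(u'')|\ \ge\ (1-\eta-\xi_1-\xi_2+2\eta)n\ \ge\ (1-\xi)n .
\]

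The main obstacle is step (c): I must make sure every walk ending in $S$ is charged to an edge at most twice, so that the loss is $O(\eta n)$ and does not eat into the main term of roughly $n$. I also need to confirm that the constants actually produce the numerator $1-19\eta-4\eta^2$ in the definition of $\beta$. If my estimate in (c) comes out slightly larger, I would tighten it by bounding $e(L,S)$ through Lemma~\ref{lem:bipartite} and using $\rho\ge q_0\sqrt{2n}$ more carefully.
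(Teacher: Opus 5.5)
Your proposal is correct and follows essentially the same route as the paper: the two-step walk count from $u'$ with the same $O(\eta n)$ error terms, an averaging step against $e(L,S)-d_S(u')\le(1+4\eta+\xi_1)n$ to get $x_{u''}\ge\beta$, and Claim~\ref{cl:nonneighbors} applied to $u''$. The differences are cosmetic. You peel off the $w=u'$ term directly, whereas the paper first replaces $|N_G(w)\cap N_G(u')|$ by $d_G(w)$. You intersect the non-neighbourhoods inside $S$, whereas the paper uses $d_G(u')+d_G(u'')-n$. Your constants come out exactly to $1-19\eta-4\eta^2$, so the hedging in your final paragraph is unnecessary.
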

\noindent \emph{Proof.}
Since $x_{u'} = 1$, 
\[
\begin{aligned}
	\rho^2x_{u'}=\rho^2 &= \sum_{v\in N_G(u')}\sum_{w\in N_G(v)} x_w = \sum_{w\in V(G)} \big|N_G(w)\cap N_G(u')\big| x_w \le \sum_{w\in V(G)} d_G(w)x_w.
\end{aligned}
\]
By $x_w\le \eta$ for $w\in S$, we have
\[
\begin{aligned}
	\sum_{w\in V(G)} d_G(w)x_w
	&= \sum_{w\in L} d_S(w)x_w + \sum_{w\in L} d_L(w)x_w + \sum_{w\in S} d_G(w)x_w \\
	&\le \sum_{w\in L} d_S(w)x_w + 2e(L) + \eta\big(e(L,S)+2e(S)\big) \\
	&\le \sum_{w\in L} d_S(w)x_w + (18\eta + 4\eta^2)n.
\end{aligned}
\]
Combined with $\rho^2 \ge (2-\eta)n$,
\[
\sum_{w\in L} d_S(w)x_w \ge (2 - 19\eta - 4\eta^2)n.
\]
Thus,
\[
\sum_{w\in L\setminus\{u'\}} d_S(w)x_w > (1 - 19\eta - 4\eta^2)n > 0.
\]
Also,
\[
\sum_{w\in L\setminus\{u'\}} d_S(w)=e(L,S)-d_S(u') \le (1 + 4\eta + \xi_1)n.
\]
Therefore, there exists a vertex $u''\in L\setminus\{u'\}$ such that
\[
x_{u''} \ge \frac{1 - 19\eta - 4\eta^2}{1 + 4\eta + \xi_1} = \beta.
\]
By Claim \ref{cl:nonneighbors}, substitute $u=u''$,
\[d_S(u'')=|S|-\big|S\setminus N_G(u'')\big| \ge (1-\eta)n-2.01\left(\frac{1+11\eta+2\eta^2}{q_0}-q_0 x_{u''}\right)n\ge(1-\xi_2)n.\]
Thus,
\[
\big|N_G(u')\cap N_G(u'')\big|\ge d_G(u')+d_G(u'')-n\ge (1-\xi)n.
\]

\vspace*{2mm}
For convenience, let $R=N_G(u')\cap N_G(u'')$ and $B=V(G)\setminus(R\cup\{u',u''\})$.

\begin{claim}\label{cl:small}
 $x_v\le\frac{153}{2000}$ for all $v\in R\cup B$,
$\xi<1-\alpha$ and $|R|\ge\alpha n$.
\end{claim}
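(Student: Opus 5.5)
The claim has three parts. The second and third are numerical consequences of Claim~\ref{cl:second}, so I will dispose of them first. The bound on $x_v$ is the substantive part.

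\emph{The estimates $\xi<1-\alpha$ and $|R|\ge\alpha n$.} Substitute $\eta=\frac{1}{14000}$ and $q_0=\sqrt{27999/28000}$ into $\xi_1$, then into $\beta$ and $\xi_2$. Since $q_0^{-1}-q_0\approx \eta/2$, we get $\xi_1\approx \eta+2.01(11.5\eta)$, which is of order $24\eta$. Also $\beta\approx 1-47\eta$, so $\xi_2$ is of order $120\eta$. Hence $\xi$ is roughly $150\eta\approx 0.011<1-\alpha=\frac{51}{5000}$. I expect the margin here to be tight, so I will carry out the arithmetic with explicit rational upper bounds. By Claim~\ref{cl:second}, $|R|\ge(1-\xi)n\ge\alpha n$.

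\emph{The bound $x_v\le\frac{153}{2000}$ for $v\in R\cup B$.} I will use a second-order eigenvector equation. For $v\ne u',u''$,
\[
\rho^2x_v=\sum_{w\in V(G)}|N_G(w)\cap N_G(v)|\,x_w .
\]
I will split this sum according to $w\in\{u',u''\}$, $w\in L\setminus\{u',u''\}$, and $w\in S$.

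For $w\in\{u',u''\}$: since $v\ne u',u''$, any common neighbour of $v$ and $u'$ lies in $N_G(v)$. So each of these two terms is at most $d_G(v)$. Moreover, Lemma~\ref{lem:bipartite} applied to $G[\{u',u'',v\},R]$ forbids $K_{3,7}$, so $|N_G(v)\cap R|\le 6$. Hence $|N_G(w)\cap N_G(v)|\le 6+|B|+2\le \xi n+8$ for $w\in\{u',u''\}$.

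For $w\in S$: we have $x_w\le\eta$, and $\sum_w|N(w)\cap N(v)|$ counts walks of length $2$ from $v$, which is at most $\sum_{y\in N(v)}d(y)\le 2e(G)\le 8n$. This bounds the $S$-part by $8\eta n$.

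For $w\in L\setminus\{u',u''\}$: we have $|L|<\eta n$, and each term is at most $d(v)$. I will redo the $K_{3,7}$ argument, replacing $\{u',u'',v\}$ with $\{u',u'',w\}$, to bound the common neighbourhood inside $R$ by $6$. Then $\sum_{w\in L}|N(w)\cap N(v)|\le 2e(L)+|L|(6+|B|)$, which is $O(\eta n+\eta\xi n^2)$. This is too crude, so this is the term that needs care. I will instead bound $\sum_{w\in L}|N(w)\cap N(v)|x_w\le\sum_{y\in N(v)}\sum_{w\in N(y)\cap L}x_w$, where each inner sum is at most $d_L(y)$. Then $\sum_y d_L(y)$ over $y\in N(v)$ is at most $e(L,S)+2e(L)$ only if counted over all $y$. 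Restricting to $y\in N(v)$ requires $d(v)$ small, and that is exactly what is not yet known for $v\in B$.

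\emph{Main obstacle.} The difficulty is therefore $v\in B$, for which $d(v)$ may a priori be large. For such $v$, I will first use Claim~\ref{cl:nonneighbors} in the opposite direction: if $x_v>\frac{153}{2000}$ then $v\in L$, and the common neighbourhood of $v$ with $u'$ is large. Applying Claim~\ref{cl:nonneighbors} to $v$ gives $|S\setminus N(v)|\le 2.01\bigl(\frac{1+11\eta+2\eta^2}{q_0}-q_0x_v\bigr)n$. Combining this with $d_S(u')\ge(1-\xi_1)n$, the three vertices $u',u'',v$ then share at least
\[
\bigl(1-\xi-2.01(1.0013-0.0765\,q_0)\bigr)n+\dots
\]
neighbours, and I have to check that this is still at least $7$. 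If that count fails, the fallback is to show that the common neighbourhood of $v$ and $u'$ inside $R$ has size at least $7$ directly from $\rho x_v\le d(v)$. Either way, the contradiction comes from $K_{3,7}\not\subseteq G$. The constant $\frac{153}{2000}$ is presumably calibrated so that $2.01\bigl(\frac{1+11\eta}{q_0}-q_0x_v\bigr)+\xi<1$; I expect the cleanest route is to verify this inequality numerically, and that it is the real obstacle.
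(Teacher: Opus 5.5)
Your treatment of $\xi<1-\alpha$ and $|R|\ge\alpha n$ is the same numerical check the paper relies on. Be aware, though, that your heuristic ``$\xi\approx150\eta\approx0.011$'' would actually \emph{violate} $\xi<\frac{51}{5000}=0.0102$. The true value is about $142.7\eta\approx0.010196$, so the exact arithmetic you promise is indispensable.

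The genuine gap is the bound $x_v\le\frac{153}{2000}$, which your proposal does not establish. Claim~\ref{cl:nonneighbors} is useless at this scale. For $x_v\approx0.0765$ it only gives $|S\setminus N_G(v)|\le2.01\bigl(1.0008-0.0765\,q_0\bigr)n\approx1.86n$, which is vacuous; that claim carries information only when $x_v$ is above roughly $\frac12$. So the lower bound you write for the common neighbourhood of $u',u'',v$ is negative, not at least $7$. The fallback fails as well. The inequality $\rho x_v\le d_G(v)$ only forces $d_G(v)$ to be of order $\sqrt n$, and all of these neighbours may lie in $B$, which can have up to $\xi n$ vertices. Nothing pushes $7$ of them into $R$. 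The obstacle is also not specific to $v\in B$: a vertex of $R$ may a priori have many neighbours in $B$ too. Finally, $\frac{153}{2000}$ is not calibrated the way you guess; it is exactly $\frac{15}{2}(1-\alpha)$.

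The missing idea is to bound the total Perron weight on $B$ rather than $d_G(v)$. Three facts combine here. Each $y\in B$ is adjacent to at most one of $u',u''$, since otherwise $y\in R$. Each $y\in B$ satisfies $d_R(y)\le6$ by the same $K_{3,7}$ argument via Lemma~\ref{lem:bipartite}. And $e(B)\le4|B|$ by Lemma~\ref{lem:edge}. Hence $\sum_{y\in B}d_G(y)\le7|B|+2e(B)\le15|B|<15\xi n$. Since $\rho x_y\le d_G(y)$, this gives $\sum_{y\in B}x_y<\frac{15\xi n}{\rho}$. Then for every $v\in R\cup B$ the ordinary eigenvector equation yields $\rho x_v\le 2+6+\sum_{y\in B}x_y$, so
\[
x_v\le\frac{8}{\rho}+\frac{15\xi n}{\rho^2}\le\eta^2+\frac{15\xi}{2-\eta}<\frac{153}{2000}.
\]
This uses $\rho\ge q_0\sqrt{2N_c}\ge 8/\eta^2$, $\rho^2\ge(2-\eta)n$ and $\xi\approx0.010196$.

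Your second-order expansion is a detour. If you regroup it by the middle vertex $y\in N_G(v)$, it collapses back to $\rho\sum_{y\in N_G(v)}x_y$, which is the first-order bound above. Bounding the pieces separately as you do double-counts $B$: you charge up to $\xi n+8$ for each of $u',u''$, and then $B$ again inside the $L$-term. Even after inserting the degree-sum estimate, that only gives about $8.5\xi\approx0.087$, which is above $\frac{153}{2000}$.
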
        
\noindent \emph{Proof.}
% Editorial scope correction: the estimate is used for both R and B below.
For each $v\in R\cup B$, we first assert that $d_R(v)\le6$. Otherwise,
$G[N_R(v)\cup \{u',u'',v\}]$ contains $K_{3,7}$, contrary to
Lemma~\ref{lem:bipartite}.  Since $G[B]$ is $1$-planar graph,
\[
\sum_{v\in B}d_G(v)\le 7|B|+2e(B)\le 15|B|<15\xi n.
\]
From $\rho x_v \le d_G(v)$,
$\sum_{v\in B}x_v <\frac{15\xi n}{\rho}.$
 So $\rho x_v\le2+6+\sum_{w\in B}x_w$ for each $v\in R\cup B$. By $\rho^2\ge (2-\eta)n$ and $q_0\sqrt{2n}\le \rho<\sqrt{2.01\,n}$, we have
\[
x_v\le\frac{8}{\rho}+\frac{15\xi n}{\rho^2}
\le\eta^2+\frac{15\xi}{2-\eta}<\frac{153}{2000} \quad \text{and} \quad \xi<1-\alpha.
\]
 Consequently,
$|R|\ge(1-\xi)n>\alpha n$.

\begin{claim}\label{cl:empty}
$B=\varnothing$.
\end{claim}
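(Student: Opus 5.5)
We argue by contradiction: assume $B\neq\varnothing$ and build a better competitor with Lemma~\ref{lem:repair}. Let $G^*$ be obtained from $G$ by keeping $G[R\cup\{u',u''\}]$ and joining every $b\in B$ to exactly $u'$ and $u''$. By Claim~\ref{cl:small}, $|R|\ge\alpha n$ and $n\ge 20$, so the hypotheses of Lemma~\ref{lem:repair} hold. It gives $D'\subseteq E(G[R])$ with $|D'|\le3$ such that $G^*-D'$ is $1$-planar and the loss $X^T(A(G^*)-A(G^*-D'))X\le 38400/\rho^2$.

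\textbf{$F_t$-freeness of $G^*-D'$.} In $G^*$, every vertex of $B$ has degree $2$ with neighbours $u',u''$. Such a vertex lies in a triangle only through $u'bu''$, which needs the edge $u'u''$, and any two such triangles share the edge $u'u''$.

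\emph{Case $t=1$.} Here $u'u''\notin E(G)$, since otherwise any $r\in R$ gives a triangle. Also $G[R]$ has no edges, since an edge $rr'$ would give the triangle $u'rr'$. Hence $G^*-D'$ is a subgraph of $K_{2,n-2}$ and is triangle-free.

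\emph{Case $t\ge2$.} A copy of $F_t$ in $G^*-D'$ uses at most one vertex of $B$, and only as a leaf of the triangle $u'bu''$ at centre $u'$ or $u''$. Replacing $b$ by any unused vertex $r\in R$ gives a copy of $F_t$ in $G[R\cup\{u',u''\}]\subseteq G$. An unused $r$ exists because $|R|\ge\alpha n>2t+1$, using $n\ge 2t/\alpha$. This contradicts $G$ being $F_t$-free.

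\textbf{Spectral comparison.} By Lemma~\ref{lem:rayleigh} it suffices to show $X^T(A(G^*-D')-A(G))X>0$. Decompose the change as
\[
\sum_{b\in B}\Bigl(2x_b(x_{u'}+x_{u''})-2\sum_{w\in N_G(b)}x_bx_w\Bigr)-\bigl(\text{loss from }D'\bigr).
\]
In the first sum each edge inside $B$ is counted twice, which only overestimates the removed part and so is harmless. The removed edges can also include edges from $u'$ or $u''$ to $b$, since $b\notin R$ means $b$ is adjacent to at most one of them; that edge is then re-added.

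\emph{Gain per vertex of $B$.} By the eigenvector equation, $\sum_{w\in N_G(b)}x_w=\rho x_b$, so the gain from $b$ is $2x_b(x_{u'}+x_{u''}-\rho x_b)$.

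\emph{Bounding $\rho x_b$.} Vertex $b$ has at most one neighbour in $\{u',u''\}$, contributing at most $1$. It has at most $6$ neighbours in $R$ each of weight at most $153/2000$. Its remaining neighbours are in $B$. Claim~\ref{cl:small} only gives $\rho x_b\le 1+6+\sum_B x_w$, which is too weak to compare with $x_{u'}+x_{u''}\ge 1+\beta$. So I would use the second-order bound instead:
\[
\rho^2x_b=\sum_{w\in N(b)}\rho x_w\le \rho x_{u'}+\text{small}.
\]
Here $\rho x_w\le 8+\sum_B x$ for $w\in R\cup B$, and $|N(b)|$ is bounded. This gives $\rho x_b\le 1+o(1)$ for most $b$, hence $\rho x_b<1+\beta$.

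\emph{Summing.} Each $b$ then gains at least about $2x_b(\beta-o(1))\ge 2(\beta-o(1))/\rho$ by Claim~\ref{cl:connected}. This exceeds $38400/\rho^2$ once $\rho$ is large, i.e.\ $n\ge 1.23\times10^{18}$, and even a single $b$ suffices.

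\textbf{Main obstacle.} The delicate step is bounding $\rho x_b$ uniformly, especially controlling $\sum_{w\in N_B(b)}x_w$ when $b$ has many neighbours inside $B$. First I would use $7$-degeneracy (Lemma~\ref{lm2}) to choose $b\in B$ with $d_B(b)\le7$. Then $d_G(b)\le 1+6+7=14$, and
\[
\rho^2x_b\le \rho+14\bigl(8+15\xi n/\rho\bigr),
\]
which gives $\rho x_b\le 1+O(\xi)+O(1/\rho)<1+\beta$. Accordingly I would apply the modification to a single low-degree vertex $b$, iterating if needed, rather than to all of $B$ at once; each step strictly increases $\rho$, contradicting maximality, and thus $B=\varnothing$.
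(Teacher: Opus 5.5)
There is a genuine gap at the point where you switch to rewiring a single low-degree vertex $b$. Lemma~\ref{lem:repair} only certifies $1$-planarity (after deleting at most three edges of $G[R]$, at cost at most $38400/\rho^2$) for the graph in which \emph{all} of $B$ is rewired at once and everything outside $G[R\cup\{u',u''\}]$ is discarded. Its proof works in a drawing of $G[R\cup\{u',u''\}]$ alone. The graph $G-\{bw: w\in N_G(b)\}+bu'+bu''$ still contains $B\setminus\{b\}$ and its edges, which may lie inside, or cross into, the faces $Q_i$. So the lemma does not apply, and the claim that this step ``strictly increases $\rho$, contradicting maximality'' is unsupported; you would need a new repair lemma that you have not supplied. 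Conversely, for the all-at-once graph $G^*$ that the lemma does handle, your accounting charges every $b$ the full amount $\rho x_b$. This double counts edges of $G[B]$ and cannot be controlled for $b$ with large $d_B(b)$, as you noticed. (Your $F_t$-freeness argument for $G^*-D'$ is fine. Note that $n\ge 2t/\alpha$ gives only $|R|\ge 2t$, not $|R|>2t+1$, but $|R|\ge 2t-1$ is all that is needed.)

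The missing idea is to keep $G^*$ and put the degeneracy order into the accounting instead. Order $B$ as $v_1,\dots,v_s$ so that $v_i$ has at most $7$ neighbours among $v_{i+1},\dots,v_s$. Then write $X^T(A(G^*)-A(G))X$ as the telescoping sum of the increments $G_i\to G_{i+1}$ (rewire $v_i$), all evaluated with the \emph{fixed} vector $X$ of $G$. Equivalently, charge each edge of $G[B]$ to its earlier endpoint. Then $v_i$ is charged for at most one vertex of $\{u',u''\}$, at most $6$ of $R$ and at most $7$ of $B$. Claim~\ref{cl:small} gives $\sum_{w\in N_{G_i}(v_i)}x_w\le 1+13\cdot\frac{153}{2000}=\frac{3989}{2000}$, so $v_i$ contributes at least $2c_0x_{v_i}$ with $c_0=1+\frac{4983}{5000}-\frac{3989}{2000}=\frac{21}{10000}$. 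The intermediate graphs $G_i$ never need to be $1$-planar; Lemma~\ref{lem:repair} is applied once, to $G_{s+1}=G^*$.

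Your margin estimates are also off. Claim~\ref{cl:small} already bounds $x_w\le\frac{153}{2000}$ for every $w\in R\cup B$, so the $R$-neighbours contribute at most $6\cdot\frac{153}{2000}$, not $6$. Your ``second-order'' bound is the same estimate after dividing by $\rho$. Since $\xi\approx 0.0102$ is a fixed constant, the available bound $15\xi n/\rho^2\le 15\xi/(2-\eta)\approx 0.0765$ is not $o(1)$. The guaranteed gain per vertex is therefore about $2x_b\cdot 0.002$, not $2x_b(\beta-o(1))$. With the $14$ terms in your display you only get $\rho x_b\lesssim 2.07>1+\beta$, so the exact count $1+6+7$, with the $\{u',u''\}$-neighbour counted once, is essential.
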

\noindent \emph{Proof.}
Suppose for contradiction $B\neq\varnothing$ and set $s=|B|\ge 1$. By Lemma \ref{lm2}, there exists a sequence $v_1,v_2, \dots, v_s$ composed of vertices in $B$ such that $v_i$ has at most $7$ neighbors in $\{v_{i+1},\dots,v_s\}$.
Define $G_1=G$. For each $i$, iteratively construct $G_{i+1}$ from $G_i$ by deleting all edges incident to $v_i$ and adding  two new edges $u'v_i$, $u''v_i$. At step $i$, $\big|N_{G_i}(v_i)\cap\{u',u''\}\big|\le 1, \big|N_{G_i}(v_i)\cap B\big|\le 7, \big|N_{G_i}(v_i)\cap R\big|\le 6. $ By Claim~\ref{cl:small}, $x_w\le\frac{153}{2000}$ for all $w\in R\cup B$,  hence $
\sum_{w\in N_{G_i}(v_i)}x_w\le \frac{3989}{2000}.$
Let $c_0=\frac{21}{10000}$.
Recall $x_{u'}=1$ and $x_{u''}\ge\frac{4983}{5000}$. Then
\[
\begin{aligned}
\frac12 X^T\big(A(G_{i+1})-A(G_i)\big)X
&\ge x_{v_i}\Big(x_{u'}+x_{u''}
-\sum_{w\in N_{G_i}(v_i)}x_w\Big)\\
&\ge x_{v_i}\bigl(1+\frac{4983}{5000}-(1+\frac{1989}{2000})\bigr)\\
&=c_0x_{v_i}.
\end{aligned}
\]
Thus,
\[
\begin{aligned}
X^T\big(A(G_{s+1})-A(G)\big)X
\ge2c_0\sum_{v\in B}x_v
\ge\frac{2c_0|B|}{\rho}.
\end{aligned}
\]
We now use induction to prove that each $G_i$ is $F_t$-free. If $t=1$, then $G$ is $C_3$-free. Thus $u'u''\notin E(G)$. Adding edges $u'v_i,u''v_i$ cannot create a triangle, hence $G_i$ is $F_t$-free.
Let $t\ge 2$. Inductively assume $G_i$ is $F_t$-free but $G_{i+1}$ contains $F_t$. Thus, $E(F_t) \cap \{u'v_i,u''v_i\} \neq \varnothing$.  Let $z$ denote the common vertex of $F_t$.  Note that only $u'u''v_iu'$ contains $v_i$, hence $u'u''\in E(G_i)$ and $v_i\neq z$ . Moreover, $z=u'$ or $u''$. Without loss of generality, let $z=u'$. Since $|V(F_t)\setminus\{u',u''\}|=2t-1$ and $|R|\ge \alpha n\ge 2t$ for $n\ge \frac{2t}{\alpha}$, there exists a vertex $r\in R\setminus V(F_t)$.
Replace triangle $u'u''v_iu'$ by $u'u''ru'$, yielding an $F_t$ inside $G_i$, a contradiction. Therefore, $G_{s+1}$ is $F_t$-free. By Lemma~\ref{lem:repair}, there exists $D'\subseteq E(G[R])$ with $|D'|\le 3$ such that $G_{s+1}-D'$ is an $F_t$-free $1$-planar graph, and
\[
X^T\big(A(G_{s+1})-A(G_{s+1}-D')\big)X \le \frac{38400}{\rho^2}.
\]
Thus
\[
X^T\big(A(G_{s+1}-D')-A(G)\big)X
\ge\frac{2c_0}{\rho}-\frac{38400}{\rho^2}
=\frac{2}{\rho}\left(c_0-\frac{19200}{\rho}\right).
\]
By $\rho^2\ge (2-\eta)n, q_0\sqrt{2n}\le \rho<\sqrt{2.01\,n},$ and $n\ge N_c$, we have
$\rho\ge q_0\sqrt{2N_c}>\frac{19200}{c_0}$.
It means that $c_0-\frac{19200}{\rho}>0$, then $\rho(G_{s+1}-D')>\rho.$ This contradicts spectral maximality of $G$. Hence $B=\varnothing$.

The Claim~\ref{cl:empty} shows that $V(G)=\{u',u''\}\cup R$  and both $u',u''$ are adjacent to every vertex of $R$. Let $H=G[R]$, so $|V(H)|=|R|=n-2$.

\begin{claim}\label{cl:adjacent}
If $t=1$, then $G\cong K_{2,n-2}$ and $x_{u'}=x_{u''}=1$.
If $t\ge 2$, then $u'u''\in E(G)$, $x_{u'}=x_{u''}=1$, and $
\frac{2}{\rho}\le x_v \le \frac{2}{\rho-6}$ for all $v \in R.$
\end{claim}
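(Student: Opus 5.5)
The plan is to use the structure from Claims~\ref{cl:second}--\ref{cl:empty}: $V(G)=\{u',u''\}\cup R$, both $u',u''$ are adjacent to all of $R$, and $H=G[R]$ has $n-2$ vertices. First I read off as much as possible from triangle-freeness and from the eigenvector equations. I then rule out $u'u''\notin E(G)$ for $t\ge2$ by a Rayleigh-quotient comparison (Lemma~\ref{lem:rayleigh}) with $K_2\vee(n-2)K_1$.

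\emph{Case $t=1$.} $G$ is $C_3$-free. Any edge $ab$ of $H$ would give the triangle $u'abu'$, so $E(H)=\varnothing$. The edge $u'u''$ would give the triangle $u'u''ru'$ for any $r\in R$, so $u'u''\notin E(G)$. Hence $G\cong K_{2,n-2}$. The map swapping $u'$ and $u''$ is an automorphism, and $G$ is connected by Claim~\ref{cl:connected}, so the Perron vector is unique up to scaling. Therefore $x_{u''}=x_{u'}=1$.

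\emph{Case $t\ge2$: adjacency of $u'$ and $u''$.} Suppose $u'u''\notin E(G)$. Since $G$ is $F_t$-free, $m(H)\le t-1$: otherwise $t$ matching edges $ab$ of $H$ together with $u'$ form $F_t$. By Lemma~\ref{lem:core}, $e(H)\le 6m(H)\le 6(t-1)$.

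For $v\in R$, we have $d_H(v)\le 6$ by Lemma~\ref{lem:corebounds}, and $x_w\le \frac{153}{2000}$ on $R$ by Claim~\ref{cl:small}. Hence $\rho x_v\le 2+6\cdot\frac{153}{2000}<3$, so $x_v<3/\rho$.

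Let $G'=K_2\vee (n-2)K_1$, obtained from $G$ by deleting $E(H)$ and adding $u'u''$. It is $F_t$-free by Lemma~\ref{lem:friendship} (as $m=0\le t-2$) and $1$-planar by Remark~\ref{rk222}. Lemma~\ref{lem:rayleigh} gives
\[
X^T\bigl(A(G')-A(G)\bigr)X\ \ge\ 2x_{u'}x_{u''}-2\sum_{ab\in E(H)}x_ax_b\ \ge\ 2\cdot\tfrac{4983}{5000}-\frac{108(t-1)}{\rho^2}.
\]
This is positive once $\rho^2\gtrsim 55(t-1)$, which follows from $\rho^2\ge(2-\eta)n$ and the hypothesis $n\ge 7t-12$ (together with the other lower bounds on $n$). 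So $\rho(G')>\rho$, contradicting maximality. Thus $u'u''\in E(G)$.

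\emph{Case $t\ge2$: the Perron entries.} Now $u'$ and $u''$ are both adjacent to every other vertex. Hence $\rho x_{u'}+x_{u'}=\sum_{v\in V(G)}x_v=\rho x_{u''}+x_{u''}$, so $x_{u''}=x_{u'}=1$.

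For $v\in R$, the eigen-equation reads $\rho x_v=x_{u'}+x_{u''}+\sum_{w\in N_H(v)}x_w=2+\sum_{w\in N_H(v)}x_w\ge 2$, which gives $x_v\ge 2/\rho$.

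For the upper bound, take $v_0\in R$ maximizing $x_v$ over $R$. Since $\Delta(H)\le6$ by Lemma~\ref{lem:corebounds}, we get $\rho x_{v_0}\le 2+6x_{v_0}$. Using $\rho>6$, this gives $x_v\le x_{v_0}\le 2/(\rho-6)$ for all $v\in R$.

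The only step needing care is the comparison that forces $u'u''\in E(G)$. There one must check that the loss from deleting up to $6(t-1)$ edges of $H$ is dominated by the gain $2x_{u'}x_{u''}$. This is exactly where the $t$-dependent lower bounds on $n$ are used. If the crude bound $x_v<3/\rho$ is too weak against the stated threshold, I would first sharpen it by the same eigen-equation bootstrap used in the last step.
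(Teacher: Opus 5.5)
Your argument is correct and follows the paper's skeleton. For $t=1$ you use triangles. For $t\ge2$ you use $m(H)\le t-1$, Lemma~\ref{lem:core}, and a Rayleigh comparison with $K_2\vee(n-2)K_1$, then the eigen-equations for the bounds on $R$.

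There are two small differences.
\begin{itemize}
\item For $x_{u'}=x_{u''}$ you use $(\rho+1)x_{u'}=\sum_v x_v=(\rho+1)x_{u''}$. The paper instead uses the swap automorphism plus Perron uniqueness. Both are fine.
\item In the adjacency step, the paper first proves $x_v\le 2/(\rho-6)$ on $R$. This needs only $V(G)=\{u',u''\}\cup R$, $x_{u'},x_{u''}\le 1$ and $\Delta(H)\le 6$, not $u'u''\in E(G)$. The comparison then needs only $\rho\ge f(t)$, which is exactly what the hypothesis $n\ge\frac{f(t)^2-f(t)+4}{2}$ together with Claim~\ref{cl:lower} supplies.
\end{itemize}

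Your cruder bound $x_v<3/\rho$ from Claim~\ref{cl:small} instead needs roughly $\rho^2>54.2(t-1)$. The justification you give for this is off:
\begin{itemize}
\item $n\ge 7t-12$ yields only $\rho^2\approx 2n\ge 14t-24$, which is too small.
\item The $f(t)$-threshold, which was calibrated for the sharper estimate, gives only about $24(t-1)$ for $\rho^2$ when $t$ is large.
\end{itemize}

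The step is still true under the theorem's full hypotheses. For $t=2$, $n\ge 1.23\times10^{18}$ covers it. For $t\ge3$, the $g(t)$-threshold and Claim~\ref{cl:lower} give $\rho\ge g(t)\ge 72(t-2)$, which is far more than needed.

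Still, the clean repair is the fallback you mention at the end: derive $x_v\le 2/(\rho-6)$ before the comparison. This is precisely the paper's order of argument, and it ties the step to the intended hypothesis on $n$.
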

\noindent \emph{Proof.}
By symmetry and  uniqueness of the
normalized Perron vector, $x_{u'}=x_{u''}=1$.
We split the proof into the following two cases.
\Needspace{5\baselineskip}
\begin{casee}
$t=1$.
\end{casee}
Note that $F_1\cong C_3$. If $u'u''\in E(G)$, then $G[\{u',u'',r\}]\cong C_3$. If $r'r''\in E(H)$, then $G[\{u',r',r''\}]\cong C_3$. Both cases lead to a contradiction. Thus $E(H)=\varnothing$ and $u'u''\notin E(G)$, i.e.,\ $G\cong K_{2,n-2}$.
\Needspace{5\baselineskip}
\begin{casee}
$t \ge 2$.
\end{casee}
 By Lemma~\ref{lem:corebounds} , $\Delta(H)\le6$. Let $x_z=\max_{v\in R}x_v$. Thus,
\[
\rho x_z = x_{u'}+x_{u''}+\sum_{w\in N_H(z)}x_w
\le 2+6x_z,
\]
so $x_z\le \frac{2}{\rho-6}$. For any $v\in R$, $\rho x_v \ge x_{u'}+x_{u''}=2$. Thus, $\frac{2}{\rho}\le x_v\le \frac{2}{\rho-6}$ for any $v\in R$.

Now suppose for contradiction $u'u''\notin E(G)$. If $m(H)\ge t$, then $H$ has at least $t$ independent edges, so $G[V(H)\cup\{u'\}]$ contains $F_t$, a contradiction. Thus, $m(H)\le t-1$. By
Lemma~\ref{lem:core} , $e(H)\le6(t-1)$. Construct $G''=K_2\vee (n-2)K_1$ from $G$ by deleting all edges of $H$ and adding edge $u'u''$. Obviously,  $G''$ is an $F_t$-free planar graph. Then
\[
\frac12 X^T\big(A(G'')-A(G)\big)X
\ge x_{u'}x_{u''}-\sum_{uv\in E(H)}x_u x_v
\ge\frac{4983}{5000}-\frac{24(t-1)}{(\rho-6)^2}.
\]
Let 
$f(t)=6+\sqrt{\dfrac{24(t-1)}{4983/5000}}$.
Since $n\ge \frac{f(t)^2-f(t)+4}{2}$, $\rho\ge \frac{1+\sqrt{8n-15}}{2}>f(t)$
by Claim~\ref{cl:lower}.
Thus, $\frac{4983}{5000}-\frac{24(t-1)}{(\rho-6)^2}>0$. So $\rho(G'')>\rho$. This is a contradiction.
Hence $u'u''\in E(G)$. 

The case $t=1$ has already been settled by Claim~\ref{cl:adjacent}.
In the remainder of the proof, assume that $t\ge2$.

\begin{claim}\label{cl:saturation}
$m(H)=t-2$ and $e(H)=6(t-2)$.
\end{claim}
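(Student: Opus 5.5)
We need to show $m(H)=t-2$ and $e(H)=6(t-2)$, given $t\ge2$, $u'u''\in E(G)$, $G=K_2\vee H$ with $|V(H)|=n-2\ge 2t-1$.

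\emph{Upper bounds.} Since $G\cong K_2\vee H$ is $F_t$-free and $|V(H)|=n-2\ge 2t-1$ (because $n\ge 2t+1$), Lemma~\ref{lem:friendship} gives $m(H)\le t-2$. Lemma~\ref{lem:core} then gives $e(H)\le 6m(H)\le 6(t-2)$. So it suffices to prove $e(H)\ge 6(t-2)$, which forces equality throughout and in particular $m(H)=t-2$.

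\emph{Lower bound by comparison with the candidate graph.} Suppose $e(H)\le 6(t-2)-1$. Let $G^\ast=K_2\vee\bigl((t-2)S_6\cup(n-7t+12)K_1\bigr)$. This graph is well defined since $n\ge 7t-12$, is $1$-planar by Remark~\ref{rk222}, and is $F_t$-free by Lemma~\ref{lem:friendship}, because $m\bigl((t-2)S_6\cup(n-7t+12)K_1\bigr)=t-2$. Both $G$ and $G^\ast$ contain $K_2\vee(n-2)K_1$ on the vertex set $\{u',u''\}\cup R$. Place the copy of $H^\ast=(t-2)S_6\cup(n-7t+12)K_1$ on $R$ by any bijection, and compare $\rho(G^\ast)$ with $\rho$.

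Rather than use Lemma~\ref{lem:rayleigh} with the vector $X$ of $G$, I would take the Perron vector $Y$ of $G^\ast$, normalized so that $y_{u'}=y_{u''}=1$. The argument of Claim~\ref{cl:adjacent} applies verbatim to $G^\ast$, since it uses only $\Delta\le 6$ and the join structure. It gives $\frac{2}{\rho^\ast}\le y_v\le\frac{2}{\rho^\ast-6}$ on $R$, and also $\frac{2}{\rho}\le x_v\le\frac{2}{\rho-6}$ on $R$. Then the symmetric identity
\[
(\rho^\ast-\rho)\,X^TY=X^T\bigl(A(G^\ast)-A(G)\bigr)Y=\sum_{uv\in E(H^\ast)}(x_uy_v+x_vy_u)-\sum_{uv\in E(H)}(x_uy_v+x_vy_u)
\]
gives the estimate
\[
(\rho^\ast-\rho)X^TY\ge 2\Bigl[\frac{4\cdot 6(t-2)}{\rho\rho^\ast}-\frac{4(6(t-2)-1)}{(\rho-6)(\rho^\ast-6)}\Bigr].
\]

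\emph{Main obstacle.} The difficulty is showing this bracket is positive for large $n$. It amounts to checking that
\[
\frac{6(t-2)-1}{6(t-2)}<\frac{(\rho-6)(\rho^\ast-6)}{\rho\rho^\ast}\approx 1-\frac{6}{\rho}-\frac{6}{\rho^\ast},
\]
which requires roughly $\rho>c\cdot 12\cdot 6(t-2)$, i.e.\ $\rho$ of order $t$. Since $\rho\ge\frac{1+\sqrt{8n-15}}{2}$ by Claim~\ref{cl:lower} (and similarly for $\rho^\ast$), this is exactly where a threshold of the form $n\ge\frac{g(t)^2-g(t)+4}{2}$ enters. I would define $g(t)$ as the root guaranteeing $\frac{(\rho-6)^2}{\rho^2}>1-\frac{1}{6(t-2)}$, for instance $g(t)=6+72(t-2)$ or a sharper explicit expression. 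Positivity then gives $\rho^\ast>\rho$, contradicting the maximality of $G$. Hence $e(H)=6(t-2)$, and since $6(t-2)=e(H)\le 6m(H)\le 6(t-2)$, also $m(H)=t-2$.
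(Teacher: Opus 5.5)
Your proof is correct, and its skeleton matches the paper's. You get the upper bounds $m(H)\le t-2$ and $e(H)\le 6m(H)$ from Lemmas~\ref{lem:friendship} and~\ref{lem:core}, then derive a contradiction by comparing $G$ with $G^\ast=K_2\vee\bigl((t-2)S_6\cup(n-7t+12)K_1\bigr)$ using the entry bounds $\frac{2}{\rho}\le x_v\le\frac{2}{\rho-6}$ on $R$. The difference is the comparison tool.

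The paper uses only $X$ and the Rayleigh bound of Lemma~\ref{lem:rayleigh}. With $a'=|E(H^\ast)\setminus E(H)|$ and $b'=|E(H)\setminus E(H^\ast)|$, it has $a'\ge b'+1$ and $b'\le 6(t-2)-1$, and positivity reduces to $\rho^2-72(t-2)\rho+216(t-2)>0$.

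You instead use the exact identity $(\rho^\ast-\rho)X^TY=X^T\bigl(A(G^\ast)-A(G)\bigr)Y$. This costs an extra step: entry bounds for the Perron vector $Y$ of $G^\ast$. That step is easy and you justify it correctly, using $\Delta(H^\ast)=6$ and $y_{u'}=y_{u''}$ from the automorphism swapping $u',u''$. It gains nothing here, though. Since $G^\ast\supseteq K_2\vee(n-2)K_1$, $\rho^\ast$ satisfies the same lower bound $\frac{1+\sqrt{8n-15}}{2}$ as $\rho$. So your worst case for $(1-6/\rho)(1-6/\rho^\ast)$ leads to exactly the same quadratic condition. The one-vector Rayleigh argument is therefore the more economical route; the two-vector identity pays off only when you need an exact sign rather than a lower bound.

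Two minor points.
\begin{enumerate}
\item The larger root of that quadratic is $36(t-2)+\sqrt{(t-2)(1296t-2808)}$. This lies below the paper's $g(t)=36(t-2)+\sqrt{(t-2)(1296t-2376)}$, so the stated hypothesis on $n$ suffices for your argument. Your crude choice $g(t)=6+72(t-2)$ slightly exceeds the paper's $g(t)$. You don't need it: since $(1-6/\rho)(1-6/\rho^\ast)\ge 1-6/\rho-6/\rho^\ast$, having both $\rho,\rho^\ast>72(t-2)$ already suffices, and the paper's $g(t)$ exceeds $72(t-2)$.
\item For $t=2$ the assumption $e(H)\le 6(t-2)-1=-1$ is vacuous. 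Set that case aside before dividing by $6(t-2)$, as the paper does.
\end{enumerate}
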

\noindent \emph{Proof.}
Since $|V(H)|=n-2\ge 2t-1$, $m(H)\le t-2$ and $e(H)\le 6m(H)\le 6(t-2)$ by Lemmas~\ref{lem:core} and~\ref{lem:friendship}. The statement is trivial when $t=2$. Let $t\ge 3$ and suppose $e(H)\le 6(t-2)-1$. Choose a copy of $H^*$ on the vertex set $V(H)$, where $H^*=(t-2) S_6\cup (n-7t+12)K_1$. By Remark~\ref{rk222}, the graph $G^*=K_2\vee H^*$
is an $F_t$-free $1$-planar graph. Let
$a'=\big|E(H^*)\setminus E(H)\big|, b'=\big|E(H)\setminus E(H^*)\big|.$
Then $a'-b'=6(t-2)-e(H)\ge1$ and $0\le b'\le6(t-2)-1$.
Thus,
\[
\begin{aligned}
\rho(G^*)-\rho(G)&\ge\frac{8}{X^TX}
\left(\frac{a'}{\rho^2}-\frac{b'}{(\rho-6)^2}\right)\\
&\ge\frac{8}{X^TX}\left(\frac{b'+1}{\rho^2}-\frac{b'}{(\rho-6)^2}\right).
\end{aligned}
\]
The function $h(b')=\frac{b'+1}{\rho^2}-\frac{b'}{(\rho-6)^2}$ is decreasing
in $b'$ obviously. By $b'\le6(t-2)-1$ ,
$h(b')\ge\frac{6(t-2)}{\rho^2}-\frac{6(t-2)-1}{(\rho-6)^2}$.
Thus,
\[
\rho(G^*)-\rho(G)\ge
\frac{8\bigl(\rho^2-72(t-2)\rho+216(t-2)\bigr)}
{X^TX\rho^2(\rho-6)^2}.
\]
Define $
g(t)=36(t-2)+\sqrt{(t-2)(1296t-2376)}$. Since $n\ge \frac{g(t)^2-g(t)+4}{2}$, $\rho\ge \frac{1+\sqrt{8n-15}}{2}>g(t)$ by Claim~\ref{cl:lower}.
Thus, $\rho(G^*)>\rho(G)$, contradicting
extremality. Therefore, $e(H)=6(t-2)$. Combining this with
$e(H)\le6m(H)\le6(t-2)$, we obtain $m(H)=t-2$.
\par

From Claims~\ref{cl:adjacent} and \ref{cl:saturation}, $G=K_2\vee H$ with $e(H)=6m(H)$ and $m(H)=t-2$. By Lemma~\ref{lem:core}, $H$ has a bipartition $(A,D)$ with $|A|=m(H)=t-2$ and $d_H(a)=6$ for every $a\in A$. When $t=2$, $A=\varnothing$. This completes the proof of Theorem~\ref{thm1}.

\section{Proof of Theorem \ref{thm2}}\label{sec:thm2}

In this section, we use Theorem~\ref{thm1} to determine the extremal
graph. The graph construction and the matching number criterion have
already been proved in Lemma~\ref{lem:friendship} and Remark ~\ref{rk222}.
We first introduce two definitions. A good drawing is a drawing where edges intersect only at common endpoints or proper crossings, adjacent edges do not cross, any two independent edges cross at most once, and no three edges cross at a single interior point.
Given a drawing $\mathcal D$ of $G$, the planarization of $\mathcal D$ is the graph obtained by replacing each crossing point of $\mathcal D$ with a dummy vertex.  We then derive the drawing properties required to characterize the connected components of $H$.

\begin{lemma}\cite{Kleitman1976}.\label{lm:parity}
If $K_{3,3}$ has a good drawing in the plane, then the total number of
crossings is odd.
\end{lemma}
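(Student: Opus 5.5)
The plan is to prove the parity statement by a deformation (homotopy) argument: attach to every drawing of $K_{3,3}$ a quantity that is invariant mod $2$, and evaluate it on one explicit drawing. For an arbitrary drawing $\mathcal D$ of $K_{3,3}$ in general position, let $\operatorname{cr}_{\mathrm{ind}}(\mathcal D)$ be the number of crossing points between pairs of \emph{independent} edges, i.e. edges with no common endpoint. Crossings between adjacent edges and self-crossings are deliberately ignored, since they may appear during a deformation. In a good drawing adjacent edges never cross, so there $\operatorname{cr}_{\mathrm{ind}}$ equals the total number of crossings. It therefore suffices to show two things: that $\operatorname{cr}_{\mathrm{ind}}(\mathcal D)\bmod 2$ does not depend on $\mathcal D$, and that it equals $1$ for one particular drawing.

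For the invariance, any two drawings of $K_{3,3}$ can be joined by a generic continuous motion. First move the six vertices to prescribed positions, avoiding the edges except at finitely many generic moments. Then homotope each edge, with its endpoints fixed, to the target curve. Along a generic motion, $\operatorname{cr}_{\mathrm{ind}}$ changes only at the following finitely many events.
\begin{itemize}
\item[(1)] Two edges become tangent and a pair of crossings is created or destroyed. This changes the count by $0$ or $\pm2$.
\item[(2)] Three edges pass through a triple point. This preserves every pairwise crossing count.
\item[(3)] An edge $e$ sweeps across a vertex $w\notin e$ (moving a vertex across an edge is the same event). Each edge incident to $w$ gains or loses exactly one crossing with $e$.
\end{itemize}
Event (3) is the only one needing a count, and it is where $K_{3,3}$ specifically matters. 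Take $e=a_1b_1$ with parts $\{a_1,a_2,a_3\}$ and $\{b_1,b_2,b_3\}$. If $w=a_j$ with $j\ne1$, its three edges are $a_jb_1,a_jb_2,a_jb_3$. The first shares $b_1$ with $e$, so only the two edges $a_jb_2,a_jb_3$ are independent of $e$. The case $w=b_j$ is symmetric. Hence $\operatorname{cr}_{\mathrm{ind}}$ changes by an even number, and in all cases its parity is preserved.

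To finish, exhibit one drawing and check its count. Place $a_1,b_1,a_2,b_2,a_3,b_3$ in this cyclic order on a circle. Draw the hexagon edges $a_1b_1,b_1a_2,a_2b_2,b_2a_3,a_3b_3,b_3a_1$ along the circle. Draw the diagonal $a_1b_2$ inside and the diagonals $a_2b_3$ and $a_3b_1$ outside; each diagonal joins antipodal points of the circle. Inside, the only diagonal is $a_1b_2$, so nothing crosses there. Outside, the chords $a_2b_3$ and $a_3b_1$ have interleaved endpoints on the circle, so they cross exactly once, and this pair is independent. So $\operatorname{cr}_{\mathrm{ind}}=1$ for this drawing. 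Consequently every good drawing has an odd number of crossings.

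The main obstacle is making the genericity claim rigorous: that any two drawings are connected through drawings in general position in which only events (1)--(3) occur, and that ignoring adjacent-edge crossings and self-crossings is harmless. Tangencies or crossings involving adjacent edges, or an edge with itself, do not affect $\operatorname{cr}_{\mathrm{ind}}$ at all. I would handle the genericity point by approximating all curves by polygonal arcs and moving one vertex or one segment at a time, which reduces everything to the elementary moves above.
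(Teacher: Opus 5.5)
The paper does not prove this lemma; it cites Kleitman, and your argument is essentially Kleitman's. The parity of the number of crossings between independent edges is invariant under a generic deformation, because the only relevant event — an edge $e$ sweeping over a vertex $w\notin e$ — changes it by a sum of $\deg(w)-1=2$ terms $\pm1$, and the standard one-crossing drawing fixes that parity at $1$.

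Your proof is correct. Two small points could be tightened. The two exterior diagonals with interleaved endpoints are only forced to cross an odd number of times, so say you \emph{choose} them to cross exactly once. The vertex-moving stage can be done by an ambient isotopy of the plane, which changes no crossings at all.
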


\begin{lemma}\label{lm:triples}
Let $K_{3,6}$ have bipartition $(U,W)$ with $|U|=3$ and $|W|=6$, and $C(v,w)$ be the number of crossings
between edges incident with $v$ and edges incident with $w$.
Let $\mathcal D$ be  a good $1$-planar drawing of $K_{3,6}$, then there exists a
partition $W=W_1\cup W_2$ with $|W_1|=|W_2|=3$ such that the following
statements hold
\begin{enumerate}[label=(\roman*)]
\item $C(v,w)=1$ for any $v,w \in W_i$  and $v\neq w(i=1,2)$.
\item $C(v,w)=0$ for any $v \in W_1$, $w \in W_2$.
\end{enumerate}

\end{lemma}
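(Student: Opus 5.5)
The plan is to turn the topological condition into a parity condition on pairs of vertices of $W$, using Lemma~\ref{lm:parity}, and then use $1$-planarity to pin down the exact values of $C(v,w)$.

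\textbf{Step 1: crossings in a $K_{3,3}$ are sums of the $C$-values.} In a good drawing, adjacent edges do not cross. So two crossing edges of $K_{3,6}$ must be incident with two distinct vertices of $W$, and every crossing of $\mathcal D$ is counted in exactly one $C(v,w)$ with $v\neq w\in W$. Take any triple $\{a,b,c\}\subseteq W$. The subdrawing of $K_{3,3}$ on $U\cup\{a,b,c\}$ is good and has exactly $C(a,b)+C(a,c)+C(b,c)$ crossings. By Lemma~\ref{lm:parity} this sum is odd for every triple.

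\textbf{Step 2: solve the parity system.} Fix $a\in W$ and set $x_a=0$ and $x_b\equiv C(a,b)+1 \pmod 2$ for $b\neq a$. The triple condition for $\{a,b,c\}$ then gives
\[
C(b,c)\equiv 1+x_b+x_c \pmod 2
\]
for all distinct $b,c\in W$. Hence $C(v,w)$ is odd exactly when $x_v=x_w$. This yields a partition $W=P\cup Q$ (by the value of $x$) such that $C$ is odd on pairs inside $P$ or inside $Q$, and even on pairs between them. Let $p=|P|\le 3\le|Q|$.

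\textbf{Step 3: per-vertex budget from $1$-planarity.} Each $w\in W$ has three edges, each crossed at most once, so $\sum_{w'\neq w}C(w,w')\le 3$.
\begin{itemize}
\item If $p\le 1$, then $|Q|\ge5$, and a vertex of $Q$ has at least $4$ odd (hence positive) values of $C$, which is impossible.
\item If $p=3$, each $w$ has two odd values, each $\ge1$. A value $3$ is impossible, since $3+1>3$. So both odd values equal $1$, the remaining budget is at most $1$, and the even cross values must be $0$. This gives (i) and (ii) with $W_1=P$ and $W_2=Q$.
\end{itemize}

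\textbf{Step 4 (main obstacle): rule out $p=2$.} In this case $|Q|=4$. The budget forces $C(q,q')=1$ for all $q\neq q'\in Q$ and $C(q,a)=0$ for $a\in P=\{a,b\}$. It also forces $C(a,b)\in\{1,3\}$, with no other crossings on the edges at $a$ and $b$. Parity and counting alone allow this configuration, so a geometric argument is needed. I would consider the planarization of the subdrawing on $U\cup Q$, where every edge is crossed exactly once. The vertex $a$ and its three edges to $U$ cross nothing of this subdrawing, so $a$ lies in a single face whose boundary contains all three vertices of $U$; the same holds for $b$. I would try to show that no face of this planarization contains all of $U$ on its boundary. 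The first approach is a local case analysis of how the $6$ crossings among the four stars at $Q$ are arranged around each $u\in U$, using that each pair of $Q$-vertices crosses exactly once. If that fails, the fallback is to apply Lemma~\ref{lm:parity} to the $K_{3,3}$'s on $U\cup\{a,q,q'\}$ together with the rotation around each $u\in U$, and derive a contradiction from the cyclic orders.
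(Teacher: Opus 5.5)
Your Steps 1--3 match the paper's proof: Kleitman's parity on each triple of $W$, the partition by parity of $C$ relative to a fixed vertex, and the per-vertex budget $\sum_{w'\neq w}C(w,w')\le 3$. They correctly dispose of $p\le 1$ and $p=3$. The gap is Step 4. For $p=2$ you give only a plan (``I would try to show\ldots'', plus a fallback), not an argument, and this is the case that carries the content of the lemma. Your remark that ``parity and counting alone allow this configuration'' is also not right. Only the \emph{local} per-vertex count allows it; a \emph{global} count via Euler's formula excludes it immediately.

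The missing idea, which the paper proves first, is a crossing bound: a good $1$-planar drawing of a graph on $n\ge 3$ vertices has at most $n-2$ crossings. To see this, keep only the crossed edges and planarize. Each crossed edge is crossed exactly once and adjacent edges do not cross, so the result is a simple bipartite plane graph (original vertices versus crossing points). It has $n+c$ vertices and $4c$ edges, so $4c\le 2(n+c)-4$, i.e.\ $c\le n-2$.

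In your case $p=2$, the subdrawing of $K_{3,4}$ on $U\cup Q$ has $7$ vertices. It has at least $6$ crossings, one for each of the six pairs $\{q,q'\}\subseteq Q$, which contradicts $c\le 5$. You were one line away. The planarization you propose to study, in which all $12$ edges are crossed exactly once, has $13$ vertices and $24$ edges and is simple and bipartite. But a simple bipartite plane graph on $13$ vertices has at most $2\cdot 13-4=22$ edges. So that planarization cannot exist at all, and no face or rotation analysis involving $a$ and $b$ is needed. The paper in fact uses this bound to rule out every partition with a part of size at least $4$, which also covers your $p\le 1$ case.
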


\noindent \emph{Proof.}
We first assert a good $1$-planar drawing of a graph $G$ on $n\geq3$
vertices has at most $n-2$ crossings. If $G$ has $c>0$ crossings,
we construct $G'$ from $G$ by retaining all crossing edges and deleting all other edges with $V(G')=V(G)$. Let $G^*$ be the planarization of $G'$. Then $G^*$ is a simple bipartite planar graph with bipartition $(S,T)$, where $S=V(G)$ and $T=V(G^*)\setminus V(G)$. Thus, $|V(G^*)|=n+c, e(G^*)=4c$ and $4c \le 2(n+c)-4$. Hence, $c \le n-2$.
The assertion is trivial when $c=0$. In particular, a good
$1$-planar drawing of $K_{3,4}$ has at most five crossings.

For convenience, let $W=\{v_0,v_1, \dots , v_5\}$. For any $v_i\in W$, since $v_i$ has three incident edges and each is crossed at most once, 
$
\sum_{v_j\in W\setminus\{v_i\}}C(v_i,v_j)\leq3.
$
For any three distinct vertices $v_i,v_j,v_k\in W$, the drawing on
$U\cup\{v_i,v_j,v_k\}$ is a good drawing of $K_{3,3}$. By
Lemma~\ref{lm:parity}, we have $
C(v_i,v_j)+C(v_i,v_k)+C(v_j,v_k)\equiv1\pmod2.$
Let $W_1=\{v_0\}\cup\{v_i\in W\setminus\{v_0\}|C(v_0,v_i)\text{ is odd}\}$,  $W_2=W\setminus W_1$. 

We next assert that $C(v_j,v_k)$ is odd if $v_j,v_k\in W_i$ for $i=1,2$, and even otherwise.  If $v_j$ or $v_k=v_0$, without loss of generality, let $j=0$, then $v_k\in W_1$ and $C(v_j,v_k)$ is odd. If $v_j,v_k\neq v_0$ and $v_j,v_k\in W_i$ for $i=1,2$, then $C(v_j,v_0)$ and $C(v_k,v_0)$ are both odd for $i=1$ and both even for $i=2$. From $C(v_j,v_0)+C(v_k,v_0)+C(v_j,v_k)\equiv 1\pmod{2}$, we conclude that $C(v_j,v_k)$ is odd.
For any $v_j\in W_1$, $v_k \in W_2$. If $j=0$, then $C(v_j,v_k)$ is even. If $v_j \neq v_0$, then $C(v_j,v_0)$ is odd and $C(v_k,v_0)$ is even. Also, $C(v_j,v_0)+C(v_k,v_0)+C(v_j,v_k)\equiv 1\pmod{2}$. Thus $C(v_j,v_k)$ is even. Therefore, for any $v_j \in W_1$, $v_k \in W_2$, we have $C(v_j,v_k)$ is even. If $|W_1|\ge 4$ or $|W_2|\ge 4$ , without loss of generality, let $|W_1| \ge 4$. Choose $W'=\{w_1,w_2,w_3,w_4\}\subseteq W_1$, the induced subgraph $K_{3,4}$ induced by $W'$ and $U$ has at least six crossings in the drawing $\mathcal D$. This contradicts the fact that $K_{3,4}$ has at most five crossings. Therefore, $|W_1|=|W_2|=3$.
Let $W_1= \{v_0,v_1,v_2\}, W_2=\{v_3,v_4,v_5\}$. Since $C(v_0,v_1)$, $C(v_0,v_2)$ are all odd and $C(v_0,v_1)+C(v_0,v_2)\le \sum_{y\in W\setminus \{v_0\} }C(v_0,y)\le 3$, we have $C(v_0,v_1)=C(v_0,v_2)=1$. Similarly, $C(v_1,v_2)=1$. Thus, statement $(i)$ follows. Since each $v\in W$ has at most three incident edges, $v$ has at most three crossing edges. Thus, we obtain $C(v_i,v_j)=0$ for $v_i\in W_1$ and $v_j\in W_2$. Therefore, statement $(ii)$ follows.

\begin{lemma}\label{lm:separation}
Let $G$ be a $1$-planar graph and $Q\subseteq G$ be a copy of
$K_{3,6}$ with bipartition $(Z,W)$, where $|Z|=3$.
If $b\in V(G)\setminus V(Q)$ has two distinct neighbors in $Z$,
then $N_G(b)\cap W=\varnothing$.
\end{lemma}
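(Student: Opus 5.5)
The plan is to argue by contradiction and use the rigid crossing structure that Lemma~\ref{lm:triples} forces on any $1$-planar drawing of $K_{3,6}$. Suppose $b\in V(G)\setminus V(Q)$ is adjacent to $z_1,z_2\in Z$ and also to some $w\in W$. Fix a $1$-planar drawing of $G$ and, by the usual redrawing operations (uncrossing adjacent edges, removing self-intersections), assume it is good. This does not increase the number of crossings on any edge. Restricting to $Q$ gives a good $1$-planar drawing $\mathcal D$ of $K_{3,6}$. By Lemma~\ref{lm:triples}, $W$ splits into triples $W_1,W_2$ with the following properties:
\begin{itemize}
\item two vertices of the same triple have exactly one crossing between their edges;
\item vertices of different triples have no crossings between their edges.
\end{itemize}
Consequently, each $v\in W$ has exactly two of its three edges to $Z$ crossed (both by edges of $Q$). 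Its third edge is crossed by no edge of $Q$.

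First I will pin down the local picture of each triple. Since the edges at $v_i\in W_1$ and at $v_j\in W_1$ cross exactly once, the three crossings within $Z\cup W_1$ form a good drawing of $K_{3,3}$ with exactly $3$ crossings; parity is consistent with Lemma~\ref{lm:parity}. I will determine this drawing up to homeomorphism of the sphere by planarizing: this gives $9$ vertices, $15$ edges and $8$ faces. I then list the faces and record which of $z_1,z_2,z_3$ and which $v\in W_1$ appear on each face boundary. Then I superimpose the second triple $W_2$. Its edges cross nothing from the first triple, so each triple sits, crossing-free relative to the other, inside faces of the other's planarization. That gives an explicit description of the planarization $P$ of $\mathcal D$, which has $15$ vertices, $30$ edges and $17$ faces.

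Next I will locate $b$. The point $b$ lies in some face $f$ of $P$, and the three edges $bz_1,bz_2,bw$ leave $f$. Each may cross at most one edge of $\mathcal D$, and only an edge not already crossed. The uncrossed edges of $\mathcal D$ are exactly the six ``free'' edges, one at each $v\in W$. So $bz_1$ either ends at $z_1$ on $\partial f$, or crosses one free edge into an adjacent face containing $z_1$ on its boundary; likewise for $z_2$ and $w$. The target is to check that no face $f$ has $z_1$, $z_2$ and some $w\in W$ all reachable in this sense. Moreover, two of the three edges from $b$ cannot use the same free edge. The same holds if one of them would have to cross a free edge whose endpoint is one of their own ends, which goodness forbids.

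The main obstacle is this face-by-face case analysis. The faces of $P$ mostly have dummy (crossing) vertices on their boundaries, and the $Z$-vertices are spread so that any face meeting two distinct $z_i$'s is bounded mostly by crossed half-edges. My first attempt is to exploit the symmetry (permuting $Z$, swapping $W_1,W_2$, permuting within a triple) to reduce to a few face types. For each type I then verify that reaching two $z$'s and one $w$ would either require crossing a crossed edge or force two of $b$'s edges through the same free edge. If the case analysis becomes unwieldy, the fallback is a counting argument. I would add $b$ and its three edges, delete a suitable crossed edge of $Q$ to restore a bipartite or $K_{3,7}$-like configuration, and contradict Lemma~\ref{lem:bipartite} or the crossing bound $c\le n-2$ from the proof of Lemma~\ref{lm:triples}.
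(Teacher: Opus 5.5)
Your setup is right. Restricting a good $1$-planar drawing to $Q$, applying Lemma~\ref{lm:triples}, and noting that the twelve crossed edges of $Q$ are already saturated is the correct start: it means $bz_1,bz_2,bw$ can cross only the six free edges. But the proposal stops where the proof has to begin. The claim that no face of $P$ lets $b$ reach $z_1$, $z_2$ and some $w\in W$ is only announced as a target. It rests on two things you do not supply: a classification, up to homeomorphism, of the $3$-crossing drawing of $K_{3,3}$ and of how the two triples can be superimposed, for which you give no argument; and a face-by-face check that is not carried out.

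The fallback does not close the gap either.
\begin{itemize}
\item The graph $Q+bz_1+bz_2$ is bipartite with parts of sizes $3$ and $7$ and has exactly $20=2(3+7)+4\cdot 3-12$ edges. So Lemma~\ref{lem:bipartite} is met with equality, and deleting edges of $Q$ only moves you further from a contradiction.
\item The bound $c\le n-2=8$ is not violated, since $b$'s edges need not cross anything.
\item Even the bipartite planar bound $e\le 2v-4$ on the planarization used below gives only $27\le 28$.
\end{itemize}

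The missing idea is to delete the six free edges rather than keep them in $P$. Then $bz_1,bz_2,bw$ cross nothing. The planarization of the twelve crossed edges together with $b$ is a simple bipartite plane graph with $16$ vertices and $27$ edges; its parts are $V(Q)$ and $\{b\}$ together with the six crossing points.

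The extra input is that each $6$-cycle $C^i$, running through $W_i$ and its three crossing points, bounds a face. This holds because $Q[Z\cup W_{3-i}]$ is connected and disjoint from $C^i$, and $b$ is joined to $z_1$ by an uncrossed edge. Hence everything off $C^i$ lies on one side of it.

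Euler's formula then gives at least $13$ faces, and so $54=2e\ge 2\cdot 6+4\cdot 11=56$, a contradiction. This is the paper's argument.

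Equivalently, without $b$ the same count forces the crossed-edge planarization to consist of the two hexagons and nine $4$-faces. Each $4$-face has only two vertices of $Q$, so no face can host a vertex joined crossing-free to three vertices of $Q$. This single count replaces your whole case analysis.
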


\noindent \emph{Proof.}
Fix a good $1$-planar drawing of $G$. Let $Z=\{z_1,z_2,z_3\}$, and choose distinct $z_1,z_2\in N_G(b)\cap Z$.
Suppose to the contrary that there exists a vertex $w \in W$ such that $bw \in E(G)$. By Lemma \ref{lm:triples}, let $\mathcal D$ be a $1$-planar drawing of $Q$, where $\mathcal D$ is the same as that in Lemma \ref{lm:triples}. Then $W=W_1\cup W_2$ with $|W_1|=|W_2|=3$. Let $
W_1=\{w_1,w_2,w_3\},
W_2=\{w_4,w_5,w_6\}.$ Without loss of generality, let $bw_1\in E(G)$.
For distinct vertices $w_j,w_k \in W_i$, $i\in\{1,2\}$, denote by $c_{jk}$ the
unique crossing between an edge incident with $w_j$ and an edge
incident with $w_k$. 

Note that there are exactly six crossings within the drawing of $Q$, involving twelve distinct edges. We construct $G'$ from $G$ by retaining $V(Q)\cup \{b\}$, the twelve crossing edges of $Q$  and $bz_1,bz_2,bw_1$, and deleting all other vertices and edges. Then we planarize $G'$ to obtain $G''$. Since each crossing edge crosses at most once, edges incident to $b$ have no crossings with other edges of $G''$. By the definition of a good $1$-planar drawing, $bz_1$, $bz_2$ and $bw_1$ do not cross each other. Therefore, $G''$ is a simple bipartite planar graph with bipartition $(S,T)$, where $S=V(Q)$, $T=V(G'')\setminus S$, and $|V(G'')|=16,e(G'')=27.$ Note that $G''$ contains two disjoint cycles $C^1=w_1c_{12}w_2c_{23}w_3c_{13}w_1,C^2=w_4c_{45}w_5c_{56}w_6c_{46}w_4$. By the Jordan Curve Theorem, $C^i$ partitions the plane into two disjoint regions $\Omega_i$ and $f_i$. By $(Z\cup W_{3-i})\cap(C^i )=\varnothing$ for $i\in \{1,2\}$ and Lemma \ref{lm:triples}, $Q[Z\cup W_{3-i}]$ is connected and disjoint from $C^i$ in the good $1$-planar drawing $\mathcal D$ of $Q$. Hence $Q[Z\cup W_{3-i}]$ belongs to one of the two regions separated by $C^i$. Without loss of generality, let $Q[Z\cup W_{3-i}]$ belong to $\Omega_i$. Thus, $Z$ and $C^{3-i}$ lie in $\Omega_i$. Since $b,z_1\notin V(C^i)$, $bz_1\in E(G'')$, and $z_1$ lies in $\Omega_i$, it follows that $b$ is also in $\Omega_i$. Similarly, all vertices in $V(G'')\setminus V(C^i)$ are contained in $\Omega_i$. Therefore, $f_i$ contains no vertices or edges, and $f_i$ is a face of $G''$ whose boundary is exactly $C^i$. For $i=1,2$, there exist two faces $f_1,f_2$ such that  $\partial (f_1)=C^1,\partial (f_2)=C^2$ and $d(f_1)=d(f_2)=6$. As $C^1\neq C^2$, $f_1\neq f_2$. Let $f$ be the total number of faces of $G''$. By Euler's formula, 
$
f\ge e(G'')-|V(G'')|+2=13
$.
As $G''$ is a simple bipartite graph, $d(f_i)$ is even and $d(f_i)\ge 4$, where $f_i$ are the faces of $G''$. By $\sum d(f_i)=2e(G'')$,
$
54=2e(G'')\ge \sum_{1\le i\le13}d(f_i)\ge 12+4\times(13-2)=56
$.
This is a contradiction. Therefore, $N_G(b)\cap W=\varnothing$.

\begin{lemma}\label{lm:starcomponent}
Let $H$ be a bipartite graph such that $K_2\vee H$ is a $1$-planar graph.
If $v\in V(H)$ and $d_H(v)=6$, then the connected component of $H$ containing
$v$ is $S_6$ with center vertex $v$.
\end{lemma}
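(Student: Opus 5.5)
Write $V(K_2)=\{u',u''\}$ and let $N_H(v)=\{w_1,\dots,w_6\}$. The plan is to use the copy of $K_{3,6}$ formed by $Z=\{u',u'',v\}$ and $W=N_H(v)$, which lies inside the $1$-planar graph $K_2\vee H$ (the edges $u'w_i,u''w_i,vw_i$ are present). Call this copy $Q$. We then apply Lemma~\ref{lm:separation} twice: once to show that the leaves $w_i$ have no further neighbours in $H$, and once to show that $v$ itself has no neighbours beyond $W$. The second part is immediate, since $d_H(v)=6$ forces $N_H(v)=W$.

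\textbf{Step 1 (the $w_i$ are leaves).} Let $b\in N_H(w_i)\setminus\{v\}$. Since $H$ is bipartite and $w_i\sim v$, the vertex $b$ lies on the same side as $v$, so $b\notin W$ and $b\neq v$. Hence $b\in V(K_2\vee H)\setminus V(Q)$. Moreover $b$ is adjacent to both $u'$ and $u''$, which are two distinct vertices of $Z$. By Lemma~\ref{lm:separation}, $N(b)\cap W=\varnothing$, which contradicts $bw_i\in E$. Therefore $N_H(w_i)=\{v\}$ for every $i$.

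\textbf{Step 2 (no edges inside $W$).} Bipartiteness alone already gives $e(H[W])=0$, because all of $W$ lies in the part opposite to $v$.

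\textbf{Step 3 (conclusion).} The vertex set $\{v\}\cup W$ is closed under taking $H$-neighbours: $N_H(v)=W$, and each $w_i$ has only the neighbour $v$. Hence it is a connected component of $H$. Its edges are exactly $vw_1,\dots,vw_6$, so the component is $S_6$ with centre $v$.

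The main obstacle is hidden in Lemma~\ref{lm:separation}, which is already proved. The point to check is that the hypotheses match: $b$ must lie outside $Q$, and the relevant $1$-planar graph must contain $Q$. Bipartiteness of $H$ is exactly what guarantees $b\notin V(Q)$. Without it, $b$ could equal some $w_j$, and Lemma~\ref{lm:separation} would not apply.
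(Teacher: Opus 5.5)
Your proof is correct and is essentially the paper's argument: both take the $K_{3,6}$ on $\{u',u'',v\}\cup N_H(v)$ and invoke Lemma~\ref{lm:separation}, since every vertex of $H$ outside $\{v\}\cup W$ is adjacent to both $u'$ and $u''$ and therefore has no neighbour in $W$; bipartiteness is used only to rule out edges inside $W$. The one cosmetic difference is that you apply the separation lemma to neighbours of the $w_i$, using bipartiteness to place them outside $Q$, whereas the paper applies it directly to all vertices outside $\{v\}\cup W$.
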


\noindent \emph{Proof.}
Let $V(K_2)=\{u',u''\}$, $v\in V(H)$ such that $d_H(v)=6$,
$U=\{u',u'',v\}$ and $W=N_H(v)$.
Then $K_2\vee H$ contains $Q\cong K_{3,6}$.
Note that every $u\in V(H)\setminus(\{v\}\cup W)$ is adjacent to both $u'$ and
$u''$ in $K_2\vee H$. Lemma~\ref{lm:separation} shows that
$N_{H}(u)\cap W=\varnothing$.
Since $H$ is bipartite, $W$ is independent and
$H[\{v\}\cup W]\cong S_6$.

\noindent \emph{\textbf{Proof of Theorem \ref{thm2}}.}
If $t=1$, then $F_t=C_3$, and $K_{2,n-2}$ is a $C_3$-free $1$-planar graph.
Let $t\geq2$,  
define $H^*=(t-2)S_6\cup(n-7t+12)K_1.$ 
Hence $K_2\vee H^*$ is a $1$-planar graph by
Remark~\ref{rk222}.
Since $m(S_6)=1$, $m(H^*)=t-2.$
By $n\geq2t+1$ , we have
$|V(H^*)|=n-2\geq2t-1$. Lemma~\ref{lem:friendship} shows that
$K_2\vee H^*$ is $F_t$-free.
Let $G\in\operatorname{SPEX}_{\mathcal{P}_1}(n,F_t)$.
By Theorem~\ref{thm1}, there exist two distinguished vertices $u',u''\in V(G)$ such that
$R=N_G(u')\cap N_G(u'')=V(G)\setminus\{u',u''\}.$
Let $H=G[R]$. We distinguish two cases to discuss. If $t=1$, then Theorem \ref{thm1} $(i)$ immediately yields $G\cong K_{2,n-2}$. Assume $t\geq2$.
By Theorem~\ref{thm1}, $G=K_2\vee H$ and $H$ has a bipartition
$(A,D)$ such that
$|V(H)|=n-2$, $|A|=t-2$, and $d_H(v)=6$ for every $v\in A$.
If $A=\varnothing$, then $t=2$ and $e(H)=0$. Hence
$G\cong K_2\vee (n-2)K_1$.
Suppose that $A\neq\varnothing$. For each $v\in A$, let $H_v$ denote the connected component of $H$ containing $v$. Since $H$ is bipartite, $K_2\vee H$ is a $1$-planar graph and $d_H(v)=6$. By Lemma \ref{lm:starcomponent}, $H_v\cong S_6$, $V(H_v)=\{v\}\cup N_H(v)$. Moreover, as $v\in A$ and $(A,D)$ is the bipartition of $H$, $N_H(v)\subseteq D$, which implies $V(H_v)\cap A=\{v\}$. Therefore, each vertex in $A$ corresponds to a distinct component $S_6$ of $H$. Since $|A|=t-2$, $H$ contains exactly $t-2$ pairwise disjoint copies of $S_6$. By Theorem \ref{thm1}, $e(H)=6(t-2)$, so $H\cong (t-2)S_6\cup (n-7t+12)K_1$, that is, $G\cong K_2\vee ((t-2)S_6\cup (n-7t+12)K_1)$ for $t \ge 2$.

Thus $G$ is the unique extremal graph for every $t\geq1$.
This completes the proof of Theorem~\ref{thm2}.

\section{Acknowledgement}
AI-assisted tools were used to improve the language of this manuscript and to assist with checking calculations. The authors take full responsibility for the contents of the paper.

\end{document}